\documentclass[a4paper,11pt,reqno]{amsart}

\usepackage[utf8]{inputenc}
\usepackage[T1]{fontenc}
\usepackage{lmodern}
\usepackage[english]{babel}
\usepackage{microtype}

\usepackage{amsmath,amssymb,amsfonts,amsthm,comment}
\usepackage{mathtools,accents}
\usepackage{mathrsfs}
\usepackage{aliascnt}
\usepackage{braket}
\usepackage{bm}
\usepackage{esint}
\usepackage{amsfonts}
\usepackage{csquotes}

\usepackage[a4paper,margin=2.7cm]{geometry}
\usepackage[citecolor=blue,colorlinks]{hyperref}
\usepackage[english,capitalize]{cleveref}

\usepackage{enumerate}
\usepackage{xcolor}

\usepackage{imakeidx} 
\usepackage{xparse} 
\usepackage{mathtools}
\usepackage{float}
\usepackage{nicefrac}
\usepackage{graphicx}
\usepackage{enumerate}

\usepackage[maxbibnames=99,backend=biber, sorting=nyt, url=false]{biblatex}
\addbibresource{Bibliography.bib}
\renewbibmacro{in:}{%
  \ifentrytype{article}
    {}
    {\bibstring{in}%
     \printunit{\intitlepunct}}}


\makeatletter
\g@addto@macro\@floatboxreset\centering
\makeatother


\makeatletter
\def\newaliasedtheorem#1[#2]#3{
  \newaliascnt{#1@alt}{#2}
  \newtheorem{#1}[#1@alt]{#3}
  \expandafter\newcommand\csname #1@altname\endcsname{#3}
}
\makeatother

\numberwithin{equation}{section}

\newtheoremstyle{slanted}{\topsep}{\topsep}{\slshape}{}{\bfseries}{.}{.5em}{}

\theoremstyle{plain}
\newtheorem{theorem}{Theorem}[section]
\newaliasedtheorem{proposition}[theorem]{Proposition}
\newaliasedtheorem{question}[theorem]{Question}
\newaliasedtheorem{lemma}[theorem]{Lemma}
\newaliasedtheorem{corollary}[theorem]{Corollary}
\newaliasedtheorem{counterexample}[theorem]{Counterexample}

\theoremstyle{definition}
\newaliasedtheorem{definition}[theorem]{Definition}
\newaliasedtheorem{example}[theorem]{Example}
\newaliasedtheorem{conjecture}[theorem]{Conjecture}

\theoremstyle{remark}
\newaliasedtheorem{remark}[theorem]{Remark}


\newcommand{\setN}{\mathbb{N}}
\newcommand{\N}{\mathbb{N}}

\newcommand{\setR}{\mathbb{R}}
\newcommand{\R}{\mathbb{R}}

\newcommand{\mm}{\mathfrak m}
\newcommand{\X}{{\rm X}}
\newcommand{\lip}{{\rm lip \,}}
\newcommand{\nchi}{{\raise.3ex\hbox{\(\chi\)}}}

\newcommand{\eps}{\varepsilon}

\let\phi\varphi

\newcommand{\abs}[1]{\left\lvert#1\right\rvert}
\newcommand{\norm}[1]{\left\lVert#1\right\rVert}



\DeclareMathOperator{\sn}{sn}

\newcommand{\di}{\mathop{}\!\mathrm{d}}

\newcommand{\res}{\mathop{\hbox{\vrule height 7pt width .5pt depth 0pt
\vrule height .5pt width 6pt depth 0pt}}\nolimits}

\newcommand{\Ch}{{\sf Ch}}

\newcommand{\st}{\ensuremath{\ :\ }} 
\newcommand{\eqdef}{\ensuremath{\vcentcolon=}}


\DeclareMathOperator{\Cb}{C_b}

\newcommand{\haus}{\mathscr{H}}


\newcommand{\ppi}{{\boldsymbol{\pi}}}

\newcommand{\dist}{\mathsf{d}}

\newcommand{\meas}{\mathfrak{m}}


\newcommand{\diam}{\mathrm{diam}}
\DeclareMathOperator{\CD}{CD}
\DeclareMathOperator{\RCD}{RCD}

\DeclareMathOperator{\Per}{Per}

\newfont{\tmpf}{cmsy10 scaled 2500}

\newcommand{\de}{\ensuremath{\,\mathrm d}} 

\subjclass{Primary: 49Q20, 49J45, 53A35. Secondary: 53C23, 49J40.}
\keywords{Isoperimetric problem, Isoperimetric profile, Lower Ricci bounds, RCD space}

\begin{document}

\title[Isoperimetry under lower Ricci bounds]{Sharp isoperimetric comparison on non-collapsed spaces with lower Ricci bounds\\[2ex] Comparaison isop\'erim\'etrique optimale pour les espaces non effondr\'es \`a courbure de Ricci minor\'ee
}

\author[G. Antonelli]{Gioacchino Antonelli}\address{Courant Institute Of Mathematical Sciences (NYU), 251 Mercer Street, 10012, New York, USA}\email{ga2434@nyu.edu}

\author[E. Pasqualetto]{Enrico Pasqualetto}\address{Scuola Normale Superiore, Piazza dei Cavalieri, 7, 56126 Pisa, Italy.}\email{enrico.e.pasqualetto@jyu.fi}

\author[M. Pozzetta]{Marco Pozzetta}\address{Dipartimento di Matematica e Applicazioni, Universit\`a di Napoli Federico II, Via Cintia, Monte S. Angelo, 80126 Napoli, Italy.}\email{marco.pozzetta@unina.it}

\author[D. Semola]{Daniele Semola}\address{Mathematical Institute, University of Oxford, Radcliffe Observatory, Andrew Wiles Building, Woodstock Rd, Oxford OX2 6GG}\email{daniele.semola.math@gmail.com}

\maketitle

\begin{abstract}
  This paper studies sharp
  isoperimetric comparison theorems and sharp dimensional concavity properties of the isoperimetric profile for non-smooth spaces with lower Ricci curvature bounds, the so-called $N$-dimensional $\RCD(K,N)$ spaces. 
  
  The absence of most of the classical tools of geometric measure theory and the possible non-existence of isoperimetric regions on non-compact spaces are handled via an original argument to estimate first and second variation of the area for isoperimetric sets, avoiding any regularity theory, in combination with
  an asymptotic mass decomposition result of perimeter-minimizing sequences.
 
  Most of our statements are new even for smooth, non-compact manifolds with lower Ricci curvature bounds and for Alexandrov spaces with lower sectional curvature bounds. They generalize several results known for compact manifolds, non-compact manifolds with uniformly bounded geometry at infinity, and Euclidean convex bodies. 
\end{abstract}

\renewcommand{\abstractname}{Résumé}
\begin{abstract}
Cet article étudie les théorèmes de comparaison isopérimétrique et les propriétés de concavité du profil isopérimétrique pour les espaces non lisses avec \`a courbure de Ricci minor\'ee: les espaces $\RCD(K,N)$ de dimension $N$.

L'absence de la plupart des outils classiques de théorie géométrique de la mesure et la non-existence possible de régions isopérimétriques dans les espaces non compacts sont traitées au moyen d'un argument original pour estimer la première et la deuxième variation de l'aire pour les ensembles isopérimétriques, en évitant la théorie de régularité. Cet argument est combiné avec avec un résultat de décomposition asymptotique de masse pour les suites minimisant le périmètre.

La plupart de nos énoncés sont nouveaux même pour les variétés lisses non compactes avec \`a courbure de Ricci minor\'ee, et pour les espaces d'Alexandrov \`a courbure sectionnelle minor\'ee. Ils généralisent plusieurs résultats connus pour les variétés compactes, les variétés non compactes avec \`a géométrie uniformément bornée à l'infini, et les corps convexes euclidiens.
\end{abstract}

\setcounter{tocdepth}{1}
\tableofcontents

\section{Introduction}

\subsection*{Isoperimetry and lower Ricci curvature bounds}

There is a celebrated connection between Ricci curvature and the isoperimetric problem in geometric analysis, going back at least to the L\'evy--Gromov inequality \cite[Appendix C]{Gromovmetric}. The primary goal of this paper is to extend several results about the isoperimetric problem on compact Riemannian manifolds with lower Ricci curvature bounds to non-compact Riemannian manifolds and non-smooth spaces with lower Ricci curvature bounds. In order to deal with the possible non-existence of isoperimetric regions and with the lack of regularity we develop a series of new tools with respect to the classical literature. Non-smooth spaces enter into play naturally when dealing with smooth non-compact Riemannian manifolds, via the analysis of their pointed limits at infinity.
\smallskip

We consider the setting of $N$-dimensional $\RCD(K,N)$ metric measure spaces $(X,\dist,\haus^N)$, for finite $N\in [1,\infty)$ and $K\in \setR$, see \cite{DePhilippisGigli18,Kitabeppu17} after \cite{Sturm1,Sturm2,LottVillani,AmbrosioGigliSavare14,Gigli12,AmbrosioGigliMondinoRajala15,ErbarKuwadaSturm15,AmbrosioMondinoSavare15,CavallettiMilmanCD}. Here $K\in \setR$ plays the role of (synthetic) lower bound on the Ricci curvature, $N\in [1,\infty)$ plays the role of (synthetic) upper bound on the dimension and $\haus^N$ indicates the $N$-dimensional Hausdorff measure. This class includes (convex subsets of) smooth Riemannian manifolds with lower Ricci curvature bounds endowed with their volume measure, their noncollapsing measured Gromov--Hausdorff limits \cite{ChCo1}, and finite dimensional Alexandrov spaces with sectional curvature lower bounds \cite{BuragoGromovPerelman,PetruninLSV}.
\smallskip

We shall rely on the theory of sets of finite perimeter in $\RCD(K,N)$ spaces, as developed in \cite{Ambrosio02,AmbrosioBrueSemola19,BruePasqualettoSemola,BPSGaussGreen}. For the sake of this introduction we just remark that it is fully consistent with the Euclidean and Riemannian ones. In particular, (reduced) boundaries of sets of finite perimeter are rectifiable, the perimeter coincides with the restriction of the $(N-1)$ --dimensional Hausdorff measure to the (reduced) boundary and it does not charge the boundary of the ambient space.\\
Given an $\RCD(K,N)$ metric measure space $(X,\dist,\haus^N)$ such that $\haus^N(B_1(x))\ge v_0$ for any $x\in X$ for some $v_0>0$, we introduce the isoperimetric profile $I_X:[0,\haus^N(X))\to [0,\infty)$ by
\begin{equation}\label{eq:isoIntro}
    I_X(v):=\inf\left\{\Per(E)\, :\, E\subset X\, ,\, \, \haus^N(E)=v\right\}\, ,
\end{equation}
where we drop the subscript $X$ when there is no risk of confusion.
When $E\subset X$ attains the infimum in \eqref{eq:isoIntro} for $v=\haus^N(E)$, we call it an isoperimetric region.
In this setting we obtain:
\begin{itemize}
    \item sharp second order differential inequalities for the isoperimetric profile, corresponding to equalities on the model spaces with constant sectional curvature. These inequalities are new even in the case of non-compact Riemannian manifolds and use in a crucial way the non-smooth approach.  The proof bypasses the possible non-existence of isoperimetric regions on the space, that is classically used for such arguments, employing a concentration-compactness argument; 
    
    \item a sharp Laplacian comparison theorem for the distance function from $\partial E$, which is a fundamental tool to prove the above items since it corresponds to the bounds usually obtained via first and second variation of the area in this low regularity setting;
    \item Gromov--Hausdorff stability and perimeters' convergence of isoperimetric regions along non-collapsing sequences of $N$-dimensional $\RCD(K,N)$ spaces. In order to prove these statements we deduce uniform regularity estimates for isoperimetric sets from uniform concavity estimates of the isoperimetric profiles.
\end{itemize}

Many of the above results are new even for smooth, non-compact manifolds with lower Ricci curvature bounds and for Alexandrov spaces with lower sectional curvature bounds. They answer several open questions in \cite{Bayle03, Milmanconvexity, LedouxCtoIso, NardulliOsorio20, BaloghKristaly}.\\
We expect the techniques developed in this paper to have a broad range of applications in geometric analysis under lower curvature bounds. For instance, in the study of more general geometric variational problems, the isoperimetric problem on weighted Riemannian manifolds with lower bounds on the Bakry-\'{E}mery curvature tensor, other geometric and functional inequalities.


\subsection*{Main results}

On model spaces with constant sectional curvature $K/(N-1)\in\setR$ and dimension $N\ge 2$ the isoperimetric profile $I_{K,N}$ solves the following second order differential equation on its domain:
\begin{equation}\label{BPintro}
    -I_{K,N}''I_{K,N}=K+\frac{\left(I'_{K,N}\right)^2}{N-1}\, .
\end{equation}
Equivalently, setting $\psi_{K,N}:=I_{K,N}^{\frac{N}{N-1}}$, we have
    \begin{equation}\label{Bayleintro}
    -\psi_{K,N}''= \frac{KN}{N-1}\psi_{K,N}^{\frac{2-N}{N}}\, .
    \end{equation}
Combining the existence of isoperimetric regions for any volume, the regularity theory in geometric measure theory, and the second variation of the area \eqref{eq:secondvariation}, in \cite{BavardPansu86,Bayle03,Bayle04,MorganJohnson00,NiWangiso} it was proved that the isoperimetric profile of a smooth, compact, $N$-dimensional Riemannian manifold with $\mathrm{Ric}\ge K$ verifies the inequality $\ge$ in \eqref{BPintro} and \eqref{Bayleintro} in a weak sense. 

Here we obtain the following extension to the setting of $\RCD(K,N)$ metric measure spaces $(X,\dist,\haus^N)$ with a uniform lower bound on the volume of unit balls, without any assumption on the existence of isoperimetric regions. We stress again that the classical argument to show \autoref{thm:BavardPansuIntro} in the compact setting uses in a crucial way the existence of isoperimetric regions for every volume, that we do not have at disposal in the present setting.

\begin{theorem}[cf.\ with \autoref{thm:BavardPansu}]\label{thm:BavardPansuIntro}
Let $(X,\dist,\haus^N)$ be an $\RCD(K,N)$ space. Assume that there exists $v_0>0$ such that $\haus^N(B_1(x))\geq v_0$ for every $x\in X$. 

Let $I:(0,\haus^N(X))\to (0,\infty)$ be the isoperimetric profile of $X$. Then:
\begin{enumerate}
    \item the inequality
 \[
    -I''I\geq K+\frac{(I')^2}{N-1}\,\quad\text{holds in the viscosity sense on $(0,\haus^N(X))$}\,,
 \]
    \item  if $\psi:=I^{\frac{N}{N-1}}$ then 
\[
    -\psi''\geq \frac{KN}{N-1}\psi^{\frac{2-N}{N}}\,\quad\text{holds in the viscosity sense on $(0,\haus^N(X))$}\, .
\]
\end{enumerate}
\end{theorem}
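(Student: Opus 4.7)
The plan is to prove the viscosity inequality at an arbitrary $v_0\in(0,\haus^N(X))$ by producing a $C^2$ upper barrier $F\geq I$ with $F(v_0)=I(v_0)$ that satisfies the sharp ODE pointwise at $v_0$, and then comparing with test functions from below. Two tools from earlier in the paper drive the argument: the asymptotic mass decomposition of perimeter-minimizing sequences, and the sharp Laplacian comparison for the distance from the boundary of an isoperimetric region. Fix $v_0$ and apply the decomposition to obtain isoperimetric regions $E_0,\dots,E_k$, each living either in $X$ or in a pointed $\RCD(K,N)$ limit of $X$ at infinity (all inheriting the uniform bound $\haus^N(B_1(\cdot))\geq v_0$), with $\sum_i\haus^N(E_i)=v_0$ and $\sum_i\Per(E_i)=I(v_0)$.

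I then construct the upper barrier through a parallel-set variation of one piece, say $E_0$: for $t$ small I set $E_0(t):=\{\dist(\cdot,E_0)\leq t\}$ (outer parallel set for $t>0$; the corresponding inner parallel set for $t<0$) and leave $E_i$ unchanged for $i\geq 1$. Define
\[
v(t):=\haus^N(E_0(t))+\sum_{i\geq 1}\haus^N(E_i),\qquad p(t):=\Per(E_0(t))+\sum_{i\geq 1}\Per(E_i).
\]
Coarea and the sharp Laplacian comparison, applied in the (possibly limit) ambient of $E_0$, yield explicit first- and second-order bounds on $v(t)$ and $p(t)$ at $t=0$, sharp on the model space, expressed in terms of $K$, $N$, and the common Lagrange multiplier forced on the boundaries of all $E_i$ by minimality of the decomposition. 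Setting $F(v(t)):=p(t)$ gives a $C^2$ function in a neighbourhood of $v_0$ with $F(v_0)=I(v_0)$. The disjoint configuration $E_0(t)\sqcup E_1\sqcup\cdots\sqcup E_k$ is an admissible competitor for $I$ at volume $v(t)$ after a standard placement argument that realizes each $E_i$ up to arbitrarily small error inside $X$ (again through the mass decomposition machinery), so $F(v)\geq I(v)$ for $v$ near $v_0$. The same algebraic computation that proves equality in \eqref{BPintro} on the model spaces then yields
\[
-F''(v_0)\,F(v_0)\geq K+\frac{F'(v_0)^2}{N-1}.
\]

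The viscosity super-solution statement of (i) follows by comparison. If $\phi\in C^2$ touches $I$ from below at $v_0$, then $\phi\leq I\leq F$ near $v_0$ with $\phi(v_0)=I(v_0)=F(v_0)$, whence $F-\phi$ has a local minimum zero at $v_0$, forcing $\phi'(v_0)=F'(v_0)$ and $\phi''(v_0)\leq F''(v_0)$. Substituting back,
\[
-\phi''(v_0)\,I(v_0)\;\geq\;-F''(v_0)\,F(v_0)\;\geq\;K+\frac{F'(v_0)^2}{N-1}\;=\;K+\frac{\phi'(v_0)^2}{N-1}.
\]
Item (ii) follows from (i) by a pointwise algebraic manipulation of $C^2$ test functions through the substitution $\psi=I^{N/(N-1)}$, matching the equivalence between \eqref{BPintro} and \eqref{Bayleintro} on the model spaces.

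The main obstacle is the rigorous control of the barrier $F$ in the low-regularity RCD setting: $\partial E_0$ need not be smooth, so $t\mapsto\Per(E_0(t))$ must be handled through the Laplacian comparison and coarea rather than classical first variation of area, and the identification of a common Lagrange multiplier across the disjoint pieces $E_i$ is delicate in the absence of standard GMT regularity. One must also cover both left and right neighbourhoods of $v_0$, which may require switching between outer and inner parallel sets and, if $E_0$ does not support the required deformation in one direction, choosing a different $E_i$ to perturb.
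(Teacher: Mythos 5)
Your overall strategy is the same as the paper's (mass decomposition plus the Laplacian comparison for parallel sets, read through viscosity test functions), and your final comparison step ($\phi\le I\le F$ with equality at $v_0$ forces $\phi'=F'$, $\phi''\le F''$) is sound \emph{if} the barrier $F$ exists with the stated properties. But there are two genuine gaps in the construction of $F$.

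First, deforming only one piece $E_0$ does not give the sharp inequality when the decomposition has several pieces. Writing $P_0=\Per(E_0)$ and $c_0$ for the barrier of $E_0$, your $v(t)$ has $v'(0)=P_0$, $v''(0)=c_0P_0$, while the best available bound on $p$ gives $p'(0)=c_0P_0$, $p''(0)\le(-K+\tfrac{N-2}{N-1}c_0^2)P_0$; the chain rule then yields
\[
-F''(v_0)\,F(v_0)\;=\;\Big(K+\tfrac{c_0^2}{N-1}\Big)\,\frac{I(v_0)}{P_0}\,,
\]
and since $I(v_0)/P_0\ge 1$ this is $\ge K+\tfrac{c_0^2}{N-1}$ only when $K+\tfrac{c_0^2}{N-1}\ge 0$ --- it \emph{fails} for $K<0$ and $|c_0|$ small. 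This is why the paper deforms \emph{all} the pieces $\Omega,Z_1,\dots,Z_{\overline N}$ simultaneously by the same $t$ (the function $\widetilde\beta$), so that $\widetilde\beta'(0)=I(v)$ and the normalization comes out right. That in turn requires knowing that all the pieces share a \emph{common} barrier $c$; you invoke a ``common Lagrange multiplier forced by minimality,'' but this is not automatic from \autoref{thm:Isoperimetriciintro} (the barrier is a priori neither unique nor the same across pieces). The paper extracts it only \emph{after} bringing in the test function $\phi$: a squeeze between $\phi(\beta(t))$ and $J_{c,K,N}(t)\Per(E)+{\rm const}$ shows $t\mapsto\Per(E_t)$ is differentiable at $t=0$ and forces $c=\phi'(v)$ for every piece. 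Your architecture (build $F$ first, compare with $\phi$ afterwards) cannot identify $c$ this way.

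Second, the claim that $F(v(t)):=p(t)$ is $C^2$ near $v_0$ is unjustified: nothing controls the regularity of $t\mapsto\Per(E_0(t))$ away from $t=0$ (and even differentiability \emph{at} $t=0$ is exactly the delicate point above). Moreover $p(t)$ is only bounded \emph{above} by the Jacobian expression $J_{c,K,N}(t)\Per(E)$, so even granting smoothness, $F$ defined from the actual perimeters need not satisfy the model ODE. The paper's rigorous substitute is to avoid any $C^2$ barrier altogether: it uses only differentiability of $\widetilde\beta$ at $t=0$ and the two-sided second-order incremental quotient of the smooth majorant $J_{c,K,N}$ at $t=0$, i.e.\ $\limsup_{t\to 0}t^{-2}\big(\phi(\widetilde\beta(t))+\phi(\widetilde\beta(-t))-2\phi(v)\big)\le(c^2\tfrac{N-2}{N-1}-K)I(v)$, which delivers the inequality for the test function directly. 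Your item (ii) reduction to item (i) by the substitution $\psi=I^{N/(N-1)}$ on test functions is fine and matches the paper.
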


{
In particular, the above holds for non-compact smooth Riemannian manifolds with Ricci curvature bounded from below and volume of unit balls uniformly bounded away from zero. In the smooth noncompact setting, \autoref{thm:BavardPansuIntro} was previously known only under the additional assumption of existence of isoperimetric sets, or under strong conditions on the asymptotic geometry, see \cite{MondinoNardulli16}. To the best of our knowledge, this is the first application of the theory of $\RCD$ spaces to prove a new sharp geometric inequality on smooth Riemannian manifolds.}

%

The proof of \autoref{thm:BavardPansuIntro} combines the generalized existence of isoperimetric regions (cf.\ with \autoref{thm:MassDecompositionINTRO}), the interpretation of the differential inequalities in the viscosity sense and the forthcoming Laplacian comparison \autoref{thm:Isoperimetriciintro} to estimate first and second variation of the area via equidistants in the non-smooth setting. 
\medskip


The main new tool that we develop to prove \autoref{thm:BavardPansuIntro} is a sharp bound on the Laplacian of the signed distance function from isoperimetric regions inside $\RCD(K,N)$ metric measure spaces $(X,\dist,\haus^N)$. 

If $E$ is a (smooth) isoperimetric region inside a smooth Riemannian manifold $(M^N,g)$ the first variation formula implies that the mean curvature $H$ of its boundary $\partial E$ is constant. Moreover, if $t\mapsto E_t$ denotes the parallel deformation of $E$ via equidistant sets, $\nu$ and $\mathrm{II}$ denote a choice of the unit normal to $\partial E$ and its second fundamental form, respectively, and $\mathrm{Ric}(\nu,\nu)$ indicates the Ricci curvature of $M$ in the direction of $\nu$, then the second variation formula yields that 
\begin{equation}\label{eq:secondvariation}
    \frac{\di^2}{\di t^2}\lvert_{t=0}\Per(E_t)=\int_{\partial E}\left(H^2-||\mathrm{II}||^2-\mathrm{Ric}(\nu,\nu)\right)\di \Per\, ,
\end{equation}
where we denoted by $\Per$ the perimeter, which coincides with the Riemannian surface measure for sufficiently regular sets.\\
If we further assume that $\mathrm{Ric}\ge Kg$, for some $K\in \setR$, then the Cauchy--Schwarz inequality applied to the eigenvalues of $\mathrm{II}$ yields
\begin{equation}\label{eq:secondvariationest}
     \frac{\di^2}{\di t^2}\Big|_{t=0}\Per(E_t)\le \left(\frac{N-2}{N-1}H^2-K\right)\Per(E)\, .
\end{equation}
One of the main technical achievements of this work is to develop a counterpart of \eqref{eq:secondvariationest} for isoperimetric regions in non-smooth spaces with lower Ricci curvature bounds. Our argument departs from the classical literature, following the general scheme outlined above, and it requires a different, global rather than infinitesimal, perspective.

Let us introduce the comparison functions
\begin{equation}
    s_{k,\lambda}(r):=\cos_k(r)-\lambda\sin_k(r)\, ,
\end{equation}
where 
\begin{equation}
   \cos_k''+k\cos_k=0\, ,\quad \cos_k(0)=1\, ,\quad \cos_k'(0)=0\, , 
\end{equation}
and 
\begin{equation}
    \sin_k''+k\sin_k=0\, ,\quad\sin_k(0)=0\, ,\quad \sin_k'(0)=1\, .
\end{equation}

\begin{theorem}[cf. with \autoref{thm:Isoperimetrici}]\label{thm:Isoperimetriciintro}
Let $(X,\dist,\haus^N)$ be an $\RCD(K,N)$ metric measure space for some $K\in\setR$ and $N\ge 2$ and let $E\subset X$ be an isoperimetric region.
Then, denoting by $f$ the signed distance function from $\overline{E}$, and $N':=N-1$, there exists $c\in\setR$ such that
\begin{equation}\label{eq:sharplaplacianintro}
\boldsymbol{\Delta} f\ge  -(N-1)\frac{s'_{K/N',c/N'}\circ \left(-f\right)}{s_{K/N',c/N'}\circ \left(-f\right)}      \quad\text{on $E$, and }\quad
\boldsymbol{\Delta} f\le (N-1)\frac{s'_{K/N',-c/N'}\circ f}{s_{K/N',-c/N'}\circ f}    \quad\text{on $X\setminus \overline{E}$}\, .
\end{equation}
\end{theorem}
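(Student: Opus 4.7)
The plan is to mimic the classical Riemannian argument leading to \eqref{eq:secondvariationest}: combine (i) a first-variation formula, forcing the ``synthetic mean curvature'' of $\partial E$ to be a constant $c\in\setR$, with (ii) a Riccati-type evolution for $\boldsymbol\Delta f$ along the equidistants $\{f=t\}$, coming from the $\RCD(K,N)$ Bochner inequality applied to the eikonal $f$. The function $\phi_\lambda(t):=(N-1)\,s'_{k,\lambda}(t)/s_{k,\lambda}(t)$ with $k=K/(N-1)$ is exactly the maximal solution of the ODE $\phi'+\phi^2/(N-1)+K=0$ with $\phi(0)=-(N-1)\lambda$, so both bounds in \eqref{eq:sharplaplacianintro} will follow by ODE comparison with initial datum $c$, choosing $\lambda=-c/(N-1)$ on $\{f>0\}$ and $\lambda=c/(N-1)$ on $\{f<0\}$; the sign flip reflects the reversal of the outer unit normal to $\partial E$ as one crosses from outside to inside.

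The first step is to establish the constancy of the synthetic mean curvature. Let $E_t:=\{f\le t\}$ for small $t$, and set $v(t):=\haus^N(E_t)$ and $P(t):=\Per(E_t)$. Since $|\nabla f|=1$ in a tubular neighborhood of $\partial E$, the coarea formula gives $v'(t)=P(t)$ for a.e.\ $t$, and the isoperimetric minimality of $E=E_0$ forces $t\mapsto P(t)-I(v(t))$ to attain its minimum at $t=0$. Combining the Gauss--Green identity for $\boldsymbol\Delta f$ on the slab $E_t\setminus E$ (from \cite{BruePasqualettoSemola,BPSGaussGreen}) with the one-sided differentiability of the isoperimetric profile $I$, I would compute the one-sided derivatives of $P$ at $0$ in terms of the mass $\boldsymbol\Delta f(\partial E)$; testing against localized perturbations of $E$ then forces the Radon--Nikodym density of $\boldsymbol\Delta f$ on $\partial E$ with respect to $\Per$ to equal a single constant $c\in\setR$, the synthetic mean curvature of $\partial E$.

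The second step is the Riccati comparison. Applying the improved $\RCD(K,N)$ Bochner inequality for eikonal functions to $f$, in which the dimension drops from $N$ to $N-1$ because the gradient direction is locked by $|\nabla f|=1$, one obtains, distributionally on $X\setminus\partial E$, $-\scal{\nabla f}{\nabla\boldsymbol\Delta f}\ge (\boldsymbol\Delta f)^2/(N-1)+K$. Integrating along the unit-speed gradient flow of $f$ starting from $\partial E$ with initial value $c$ and comparing with the model ODE above produces the upper bound on $X\setminus\overline E$; applied to $-f$ inside $E$ with initial value $-c$, it produces the lower bound on $E$. The main obstacle will be to make this ``flow from $\partial E$'' rigorous when $\partial E$ is only rectifiable and $\boldsymbol\Delta f$ is merely a signed Radon measure: the Riccati inequality must be propagated from an initial datum concentrated on the reduced boundary $\partial^*E$, through a potentially very irregular equidistant foliation. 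I expect to handle this by regularizing $f$ (for instance via the heat semigroup or a Hopf--Lax mollification), applying the improved Bochner bound to the approximants, and passing to the limit using the rectifiability and Gauss--Green tools for sets of finite perimeter in $\RCD(K,N)$ spaces developed in \cite{AmbrosioBrueSemola19,BruePasqualettoSemola,BPSGaussGreen}.
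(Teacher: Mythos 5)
There is a genuine gap, and it sits exactly where the paper's main new idea is: your first step. To produce a single constant $c$ you invoke (a) one-sided differentiability of the isoperimetric profile $I$ and (b) ``testing against localized perturbations of $E$''. Point (a) is circular: at the stage where \autoref{thm:Isoperimetriciintro} must be proved, only continuity of $I$ is available (\autoref{lem:LocalHolderProfile}); one-sided differentiability and semiconcavity of $I$ are \emph{consequences} of \autoref{thm:BavardPansu}, which itself rests on \autoref{thm:Isoperimetriciintro} (and the body version, \autoref{thm:Isoperimetrici}, is proved for volume-constrained minimizers for compact variations with no lower volume bound, where $I$ plays no role at all). Point (b) conceals the entire difficulty: in an $\RCD$ space there is no normal vector field to flow, so a volume-fixing perturbation that pushes $\partial E$ inward near one point and outward near another is not available off the shelf; constructing such competitors is precisely the content of the paper's proof, not an input to it. Moreover, the premise that $\boldsymbol\Delta f$ charges $\partial E$ with an $L^1(\Per)$ density is not known a priori: in the paper this is \autoref{prop:laplfull}, obtained \emph{after} (and partly by means of) \autoref{thm:Isoperimetrici}.

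What the paper does instead of your first step is a two-point viscosity argument for the a-dimensional bounds $\boldsymbol\Delta f\ge c-Kf$ on $E$ and $\boldsymbol\Delta f\le c-Kf$ on $X\setminus\overline E$: if they fail, one finds touching functions $\overline\psi\le f$ at $x\in X\setminus E$ and $\overline\chi\ge f$ at $y\in E$ with $\Delta\overline\psi\ge c+\eps$ and $\Delta\overline\chi\le c-\eps$; their Hopf--Lax transforms are $1$-Lipschitz, eikonal near the contact sets on $\partial E$, and retain the Laplacian bounds (\autoref{thm:HLlapla}). Sliding their level sets yields a volume-fixing competitor whose perimeter, estimated via the Gauss--Green formula with sharp traces (\autoref{thm:GaussGreen}, \autoref{prop:6.1BPS}), is strictly smaller than $\Per(E)$ --- a contradiction. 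Your second step is then morally the paper's Step 2, but it cannot be run as written: the inequality $-\langle\nabla f,\nabla\boldsymbol\Delta f\rangle\ge(\boldsymbol\Delta f)^2/(N-1)+K$ is not meaningful when $\boldsymbol\Delta f$ is only a signed Radon measure (one can neither square it nor differentiate it), $f$ is not an admissible test function for the Bochner inequality, and heat-flow regularization does not preserve the eikonal constraint $|\nabla f|=1$ that your dimension reduction from $N$ to $N-1$ relies on. The rigorous substitute is the Cavalletti--Mondino localization: the needle densities $h_\alpha$ satisfy the $\CD(K,N)$ Riccati inequality, the a-dimensional bound supplies the one-sided ``initial condition'' $(\log h_\alpha)'\le c-K\dist_{\overline E}$, and \autoref{lemma:Riccati} gives the sharp bounds. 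As written, the proposal does not close.
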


The bounds in \eqref{eq:sharplaplacianintro} are understood in the sense of distributions, and we always consider open representatives for isoperimetric regions (see \autoref{thm:RegularityIsoperimetricSets}). They are sharp, since equalities are attained in the model spaces with constant sectional curvature.

Notice that the distance function might not be globally smooth even when $(X,\dist)$ is isometric to a smooth Riemannian manifold and $E\subset X$ has smooth boundary, in which case \eqref{eq:sharplaplacianintro} is equivalent to the requirement that $\partial E$ has constant mean curvature equal to $c$. 
We will indicate any $c\in\setR$ such that \eqref{eq:sharplaplacianintro} holds as a \emph{mean curvature barrier} for $E$.

\subsection*{Consequences and strategies of the proofs}
Several consequences of \autoref{thm:BavardPansuIntro} are investigated in the rest of the paper:
\begin{itemize}
    \item uniform semi-concavity and Lipschitz properties of the isoperimetric profile in a fixed range of volumes, only depending on the lower Ricci curvature bound, the dimension and a lower bound on the volume of unit balls, see \autoref{prop:SharpenedConcavityAndLimit}, \autoref{cor:UniformPositivityProfile}, and \autoref{cor:UniformLipschitzProfile};
    \item the existence of the limit $\lim_{v\to 0}I(v)/v^{\frac{N-1}{N}}\in (0,N\omega_N^{\frac{1}{N}}]$ on any $\RCD(K,N)$ space $(X,\dist,\haus^N)$ with volume of unit balls uniformly bounded from below, see \autoref{prop:SharpenedConcavityAndLimit} and \autoref{rem:AsintoticaProfileSmallVolumes};
    \item the strict subadditivity of the isoperimetric profile for small volumes (only depending on $K$, $N$ and the uniform lower bound on the volume of unit balls), see \autoref{cor:EstimateDerivativeProfileAndSubadditivity}. This implies in turn that isoperimetric regions with small volume are connected, see \autoref{cor:GHRnRegioniConnesse}. Moreover, in the asymptotic mass decomposition, minimizing sequences for small volumes do not split: either they converge to an isoperimetric region, or they drift off to exactly one isoperimetric region in a pointed limit at infinity, see \autoref{lem:IsoperimetricAtFiniteOrInfinite}. All the previous conclusions hold for every volume when $K=0$;
    \item uniform, scale invariant diameter estimates for isoperimetric regions of small volume, without further assumptions, and for any volume when $K=0$ and $(X,\dist,\haus^N)$ has Euclidean volume growth, see \autoref{lem:BoundDiam};
    \item uniform density estimates and uniform almost minimality properties for isoperimetric sets, see \autoref{cor:UniformRegularityIsop}. They allow to bootstrap $L^1$-convergence to Gromov--Hausdorff convergence and convergence of the perimeters for sequences of isoperimetric sets and to prove the stability of mean curvature barriers obtained with \autoref{thm:Isoperimetriciintro}, see \autoref{thm:ConvergenceBarriersStability}. We remark that uniform almost minimality properties are fundamental in several circumstances in geometric measure theory, see for instance \cite{Whiteminimax}, and that the classical strategies to achieve them break in the present setting, as they heavily rely on smoothness.\\
\end{itemize}

Let us further comment on \autoref{thm:Isoperimetriciintro}.
When $\partial E$ is a smooth constant mean curvature hypersurface in a smooth Riemannian manifold, the Laplacian of the signed distance function from $E$ equals the mean curvature of $\partial E$ along $\partial E$. Then \eqref{eq:sharplaplacianintro} can be proved with a classical computation using Jacobi fields and one dimensional comparison for Riccati equations away from the cut locus, finding its roots in \cite{Wuelementary,Calabi58}. The singular contribution coming from the cut locus has the right sign, in great generality.
\smallskip

The original proof of the L\'evy--Gromov isoperimetric inequality \cite[Appendix C]{Gromovmetric} builds on a variant of \eqref{eq:sharplaplacianintro}. The key additional difficulty with respect to smooth constant mean curvature hypersurfaces is that boundaries of isoperimetric regions might be non-smooth when $N\ge 8$ and it is handled relying on a deep regularity theorem in geometric measure theory \cite{Almgrenreg}.
This strategy seems out of reach in the setting of $\RCD$ spaces.\\ 
Our proof of \autoref{thm:Isoperimetriciintro} partially avoids the regularity theory even on smooth Riemannian manifolds. It is inspired by \cite{CaffarelliCordoba93,Cabre97,Petruninharmonic} and the recent study of perimeter minimizing sets in \cite{MoS21}.\\ 
We start proving adimensional versions of \eqref{eq:sharplaplacianintro}, corresponding to the limit of \eqref{eq:sharplaplacianintro} as $N\to\infty$. Exploiting the equivalence between distributional and viscosity bounds on the Laplacian from \cite{MoS21}, we prove that if the bounds fail there exists a volume fixing perturbation of $E$ with strictly smaller perimeter, a contradiction with the isoperimetric condition. The perturbations are built by sliding simultaneously level sets of distance-like functions with well controlled Laplacian. The argument can be thought as a highly non-linear version of the moving planes method, with no symmetries on the background and in a very low regularity setting.\\ 
The \emph{stability} of Laplacian bounds under the Hopf--Lax duality (equivalently, along solutions of the Hamilton--Jacobi equation), proved in \cite{MoS21} building on \cite{Kuwada07,AmbrosioGigliSavare15Bochner}, plays the role of the classical computation with Jacobi fields in Riemannian geometry. It crucially enters into play in the construction of the perturbations. The adimensional versions of \eqref{eq:sharplaplacianintro} can be improved to sharp dimensional bounds thanks to the well established localization technique \cite{CM17,CM20}. The additional difficulty with respect to the case of local perimeter minimizers considered in \cite{MoS21} is that only volume fixing perturbations are admissible for the isoperimetric problem.
\medskip

We do not claim that \autoref{thm:Isoperimetriciintro} carries all the information provided by the first and second variation formulas for the area in the smooth setting, however one of the main novelties of the present work will be to show that it is a valid replacement in several circumstances. Moreover, its global nature makes it more suitable for stability arguments.
Among its direct consequences we mention sharp Heintze--Karcher type bounds for perimeters \autoref{prop:variationofarea} and volumes \autoref{cor:volumebounds} of variations of isoperimetric sets via equidistant sets, that can be obtained by integration. 
\medskip

The ability to deduce information about the isoperimetric behaviour of an $\RCD(K,N)$ metric measure space $(X,\dist,\haus^N)$ from \autoref{thm:Isoperimetriciintro} is related to the existence of isoperimetric regions, which is not guaranteed, when $\haus^N(X)=\infty$, see, e.g., \cite[Example 3.6]{AFP21}, or the introduction
of \cite{Nar14}. \\ 
We overcome this issue thanks to the asymptotic mass decomposition result recently proved in \cite{AntonelliNardulliPozzetta}, extending the previous \cite{RitRosales04,Nar14,MondinoNardulli16,AFP21} and building on a concentration-compactness argument. If $(X,\dist)$ is compact, a minimizing sequence for the isoperimetric problem \eqref{eq:isoIntro} for volume $V>0$ converges, up to subsequences, to an isoperimetric set, by lower semicontinuity of the perimeter. This is not true, in general, when $(X,\dist)$ is not compact as elementary examples illustrate. However, \cite{AntonelliNardulliPozzetta} shows that, if $(X,\dist,\haus^N)$ is a non-compact $\RCD(K,N)$ space with $\haus^N(B_1(p))>v_0$ for any $p\in X$ for some $v_0>0$, then, up to subsequences, every minimizing sequence for the isoperimetric problem at a given volume $V>0$ splits into finitely many pieces. One of them converges to an isoperimetric region in $X$, possibly of volume less than $V$ (possibly zero). The others converge to isoperimetric regions inside pointed measured Gromov--Hausdorff limits at infinity $(Y,\dist_Y,\haus^N,q)$ of sequences $(X,\dist,\haus^N,p_i)$, where $\dist(p,p_i)\to\infty$ as $i\to\infty$ for some reference point $p\in X$. Moreover, there is no loss of total mass in this process, see \autoref{thm:MassDecompositionINTRO}, hence the result can be seen as a generalized existence of isoperimetric regions.
\medskip

An observation going back to \cite{Nar14} {(see also the subsequent \cite{MondinoNardulli16})} is that if $(X,\dist)$ and all its pointed limits at infinity are isometric to smooth Riemannian manifolds, then generalized isoperimetric regions can be used as isoperimetric regions in the compact case in combination with \eqref{eq:secondvariation} to deduce useful information. However, the assumption that $(X,\dist)$ is isometric to a smooth Riemannian manifold with $\mathrm{Ric}\ge K$ and $\haus^N(B_1(p))>v_0$, for every $p\in X$ and some $v_0>0$, does not guarantee any regularity of its pointed limits at infinity besides them being non-collapsed Ricci limit spaces.

The ability to estimate the second variation of the perimeter via equidistant sets for isoperimetric sets in general $\RCD(K,N)$ spaces $(X,\dist,\haus^N)$ as in \autoref{thm:Isoperimetriciintro} makes this heuristic work without further assumptions in the smooth Riemannian case and in greater generality, obtaining \autoref{thm:BavardPansuIntro}. This is a fundamental new contribution of the present work.
\medskip

{Let us briefly comment on the possible extensions of the present work to more general settings.\\
The generalization of (some of) the results obtained in this paper to general $\RCD(K,N)$ metric measure spaces $(X,\dist,\meas)$ with $N<\infty$ is left to the future investigation, due to some additional difficulties with respect to the case $\meas=\haus^N$ considered here that we outline below.}

The two main ingredients for the proof of \autoref{thm:BavardPansuIntro} are the asymptotic mass decomposition \autoref{thm:MassDecompositionINTRO} and the Laplacian comparison \autoref{thm:Isoperimetriciintro}.
The asymptotic mass decomposition \autoref{thm:MassDecompositionINTRO} has been obtained in \cite{AntonelliNardulliPozzetta} by the first and the third authors together with Nardulli, leveraging on the techniques developed in \cite{Nar14, AFP21}. We expect that the results of \cite{AntonelliNardulliPozzetta} generalize to arbitrary $\RCD(K,N)$ metric measure spaces $(X,\dist,\meas)$. However, when $\meas=\haus^N$, in the proof of \cite{AntonelliNardulliPozzetta} we exploit the fact that the density of the measure is $1$ almost everywhere, and the perimeters of the balls centered at almost every point are infinitesimally equivalent to the perimeters of the balls in $\R^N$. Both these statement fail in general when $\meas\neq\haus^N$.

We also expect that \autoref{thm:Isoperimetriciintro} holds for arbitrary reference measures $\meas$. In particular, we notice that the Laplacian comparison holds for isoperimetric sets in smooth weighted Riemannian manifolds verifying the $\CD(K,N)$ condition. However, the extension would require again some new insights, in particular in reference to the mild topological regularity for isoperimetric sets obtained in \cite{AntonelliPasqualettoPozzetta21}. 
\medskip

{
Besides the case of general $\RCD(K,N)$ metric measure spaces $(X,\dist,\meas)$ with finite $N$, two natural directions for the future investigation are the infinite-dimensional and the non-linear settings.

In the case of $\RCD(K,\infty)$ metric measure spaces we raise the following:

\begin{conjecture}\label{conjinfty}
    Let $(X,\dist,\meas)$ be an $\RCD(K,\infty)$ metric measure space. Then the isoperimetric profile $I$ satisfies the second order differential inequality
    \begin{equation}\label{conj:infty}
        -I^{''}I\ge K 
    \end{equation}
    in the viscosity sense on its domain.
\end{conjecture}

We remark that the inequality \eqref{conj:infty} would be saturated by the Euclidean space endowed with the standard metric and a (suitably normalized) weighted Gaussian measure.

}


\medskip

{

In the direction of removing the Hilbertian assumption, to the best of our knowledge, the validity of sharp differential second-order inequalities for the isoperimetric profile has not been investigated before even in the case of smooth Finsler manifolds satisfying curvature-dimension bounds. In this regard we ask the following:

\begin{question}\label{qCD}
Do the Laplacian comparison for the distance from isoperimetric sets \autoref{thm:Isoperimetriciintro} and the
sharp second-order differential inequalities for the isoperimetric profile \autoref{thm:BavardPansuIntro} hold for
(possibly essentially non-branching) $\CD(K,N)$ metric measure spaces? 
\end{question}

Apart from the additional technical challenges with respect to the setting considered in the present paper, we believe that addressing \autoref{conjinfty} and \autoref{qCD} might require the development of new strategies.

}


\subsection*{Comparison with the previous literature}

We conclude this introduction with a brief comparison between our results and the previous literature about the isoperimetric problem under lower curvature bounds, without the aim of being comprehensive.
\begin{itemize}

    \item The difficulty of obtaining second order properties for the isoperimetric profile on \emph{non-smooth spaces} was pointed out in \cite[page 99]{Bayle03}, \cite{LedouxCtoIso} and in \cite[Appendix]{Milmanconvexity}. To the best of our knowledge, \autoref{thm:BavardPansuIntro} is the first instance of second order properties of the isoperimetric profile in a context where no approximation with smooth Riemannian manifolds is at disposal.
       \item The setting of $\RCD(0,N)$ spaces $(X,\dist,\haus^N)$ recovers in particular many of the results for Euclidean convex cones treated in \cite{RitRosales04} and for cones with non-negative Ricci curvature considered in \cite{MorganRitore02}.
    \item The results of the present paper recover, in a more general setting, many of the results proved in \cite{LeonardiRitore} for unbounded Euclidean convex bodies of uniform geometry.\\
    The setting of $\RCD(K,N)$ spaces $(X,\dist,\haus^N)$ includes, more in general, convex subsets of smooth Riemannian manifolds with Ricci curvature bounded from below, regardless of any compactness assumption and regularity assumption on the boundary. Compact convex subsets of Riemannian manifolds with Ricci curvature lower bounds have been considered in \cite{BayleRosales}. 
    \item The stability of mean curvature barriers in the sense of \autoref{thm:Isoperimetriciintro} for Gromov--Hausdorff converging sequences of boundaries inside measured Gromov--Hausdorff converging sequences of $\RCD(K,N)$ spaces has been recently observed in \cite{Ketterer21} (see also the previous \cite{BrueNaberSemola20}). In this regards, The main novelty of our work is to provide a large and natural class of sets having mean curvature barriers, namely isoperimetric sets. Moreover, we prove that $L^1$ convergence (which is guaranteed, up to subsequences, for equibounded isoperimetric sets) self-improves to Gromov--Hausdorff convergence under very natural assumptions. Therefore the stability of mean curvature barriers applies in this setting.
\end{itemize}

In order to put things into perspective, we stress that this paper heavily relies on the results of \cite{AntonelliPasqualettoPozzetta21, AntonelliNardulliPozzetta} (mild regularity of isoperimetric sets and asymptotic mass decomposition, respectively), while it is independent of the existence results in \cite{AntBruFogPoz} by the first and third authors together with Bruè and Fogagnolo, and it is completely independent of the sharp isoperimetric inequality obtained in \cite{BrendleFigo, BaloghKristaly}. The key contribution of the present work is to develop some refined tools of geometric measure theory in a low regularity setting and to combine them in original way with the asymptotic mass decomposition from \cite{AntonelliNardulliPozzetta}, obtaining several consequences for the isoperimetric problem under lower Ricci curvature bounds.

\subsection*{Addendum}
This is the first of two companion papers, together with \cite{AntonelliPasqualettoPozzettaSemolaASYMPTOTIC}. The joint version of the two papers appeared on arXiv in \cite{ConcavitySharpAPPS}. In the second one we are going to explore several consequences of the main results of this paper to the study of asymptotic isoperimetry on non-collapsed spaces with Ricci lower bounds, especially in the case where $K=0$:
\begin{itemize}
    \item exploiting \autoref{thm:Isoperimetriciintro}, we give a new proof of the sharp isoperimetric inequality on $\RCD(0,N)$ spaces $(X,\dist,\haus^N)$ with Euclidean volume growth that has been considered, with increasing level of generality, in \cite{AgostinianiFogagnoloMazzieri, FogagnoloMazzieri, BrendleFigo, BaloghKristaly, CavallettiManini}. We prove also the rigidity of the inequality in the setting above without any additional regularity assumption. Namely we prove that equality is achieved if and only if the ambient space is isometric to a cone and the set saturating the inequality is isometric to a ball centered at a tip of the cone;
    
    \item we explicitly determine the asymptotic behavior of the isoperimetric profile for volumes tending to zero on $\RCD(K,N)$ spaces $(X,\dist,\haus^N)$, see \autoref{rem:AsintoticaProfileSmallVolumes};
    
    \item we analyze the behavior of sequences of isoperimetric sets for volumes tending to zero on $\RCD(K,N)$ spaces $(X,\dist,\haus^N)$ and for volumes tending to infinity on Alexandrov spaces with non-negative curvature and Euclidean volume growth.
\end{itemize}

\subsection*{Acknowledgements}

The first author is partially supported by the European Research Council (ERC Starting Grant 713998 GeoMeG `\emph{Geometry of Metric Groups}').
The second author is supported by the Balzan project led by Luigi Ambrosio. The first and the third authors are partially supported by the INdAM - GNAMPA Project CUP\_E55F22000270001 ``Isoperimetric problems: variational and geometric aspects''.
The last author is supported by the European Research Council (ERC), under the European Union Horizon 2020 research and innovation programme, via the ERC Starting Grant  “CURVATURE”, grant agreement No. 802689. He is grateful to Elia Bru\`e and Andrea Mondino for several conversations and useful comments on a preliminary version of this note and to Christian Ketterer for useful comments.
The authors are grateful to Otis Chodosh, Mattia Fogagnolo and Emanuel Milman for useful feedback on a preliminary version of the paper. {They are grateful to the anonymous reviewers for several useful comments that led to a great improvement in the presentation of the note and for pointing out some inaccuracies in a previous version.}

\medskip

\section{Preliminaries}

In this paper, by a \emph{metric measure space} (briefly, m.m.s.) we mean a triple \((X,\dist,\meas)\), where \((X,\dist)\) is a complete and separable
metric space, while \(\meas\geq 0\) is a boundedly-finite Borel measure on \(X\).
For any \(k\in[0,\infty)\), we denote by \(\haus^k\) the \emph{\(k\)-dimensional Hausdorff measure} on \((X,\dist)\).
We indicate with \(C(X)\) the space of all continuous functions \(f\colon X\to\R\) and \(C_b(X)\coloneqq\{f\in C(X)\,:\,f\text{ is bounded}\}\).
We denote by \({\rm LIP}(X)\subseteq C(X)\) the space of all Lipschitz functions, while \({\rm LIP}_{\mathrm{bs}}(X)\) (resp.\ \({\rm LIP}_c(X)\)) stands for
the set of all those \(f\in{\rm LIP}(X)\) whose support \({\rm spt}f\) is bounded (resp.\ compact). More generally, we denote by
\({\rm LIP}_{\mathrm{loc}}(X)\) the space of locally Lipschitz functions \(f\colon X\to\setR\). Given $f\in{\rm LIP}_{\mathrm{loc}}(X)$,
\[
\lip f (x) \eqdef \limsup_{y\to x} \frac{|f(y)-f(x)|}{\dist(x,y)}
\]
is the \emph{slope} of $f$ at $x$, for any accumulation point $x\in X$, and $\lip f(x)\coloneqq 0$ if $x\in X$ is isolated.

We shall also work with the local versions of the above spaces: given \(\Omega\subseteq X\) open, we will consider the spaces
\({\rm LIP}_c(\Omega)\subseteq{\rm LIP}_{\mathrm{bs}}(\Omega)\subseteq{\rm LIP}(\Omega)\subseteq{\rm LIP}_{\mathrm{loc}}(\Omega)\),
where by \({\rm LIP}_{\mathrm{bs}}(\Omega)\) we mean the space of all \(f\in{\rm LIP}(\Omega)\) having bounded support \({\rm spt}f\subseteq\Omega\)
that verifies \(\dist({\rm spt}f,\partial\Omega)>0\).
\medskip

Let us define
\[
\sn_K(r) := \begin{cases}
(-K)^{-\frac12} \sinh((-K)^{\frac12} r) & K<0\, ,\\
r & K=0\, ,\\
K^{-\frac12} \sin(K^{\frac12} r) & K>0\, .
\end{cases}
\]
We denote by $v(N,K,r)$ and $s(N, K, r)$ the volume and the surface measure, respectively, of the ball of radius $r$ in the (unique) simply connected Riemannian manifold of sectional curvature $K$ and dimension $N$.
In particular $s(N,K,r)=N\omega_N\mathrm{sn}_K^{N-1}(r)$ and $v(N,K,r)=\int_0^rN\omega_N\mathrm{sn}_K^{N-1}(r)\de r$, where $\omega_N$ is the Euclidean volume of the Euclidean unit ball in $\mathbb R^N$.
\medskip

\subsection{Convergence and stability results}
{
Following the exposition of \cite{AmbrosioBrueSemola19}, we introduce a definition of pointed measured
Gromov--Hausdorff convergence (via a proper realization) that is fit for our purposes. In our setting, where
we always deal with locally uniformly doubling measures, this definition is equivalent to the standard one,
see \cite[Theorem 3.15 and Section 3.5]{GigliMondinoSavare15}.
}

%
%
\begin{definition}[pGH and pmGH convergence]\label{def:GHconvergence}
A sequence $\{ (X_i, \dist_i, x_i) \}_{i\in \N}$ of pointed metric spaces is said to converge in the \emph{pointed Gromov--Hausdorff topology, in the $\mathrm{pGH}$ sense for short,} to a pointed metric space $ (Y, \dist_Y, y)$ if there exist a complete separable metric space $(Z, \dist_Z)$ and isometric embeddings
\[
\begin{split}
&\Psi_i:(X_i, \dist_i) \to (Z,\dist_Z), \qquad \forall\, i\in \N\, ,\\
&\Psi:(Y, \dist_Y) \to (Z,\dist_Z)\, ,
\end{split}
\]
such that for any $\eps,R>0$ there is $i_0(\varepsilon,R)\in\mathbb N$ such that
\[
\Psi_i(B_R^{X_i}(x_i)) \subset \left[ \Psi(B_R^Y(y))\right]_\eps,
\qquad
\Psi(B_R^{Y}(y)) \subset \left[ \Psi_i(B_R^{X_i}(x_i))\right]_\eps\, ,
\]
for any $i\ge i_0$, where $[A]_\eps\coloneqq \{ z\in Z \st \dist_Z(z,A)\leq \eps\}$ for any $A \subset Z$.

Let $\meas_i$ and $\mu$ be given in such a way $(X_i,\dist_i,\meas_i,x_i)$ and $(Y,\dist_Y,\mu,y)$ are m.m.s.\! If in addition to the previous requirements we also have $(\Psi_i)_\sharp\mathfrak{m}_i \rightharpoonup \Psi_\sharp \mu$ with respect to duality with continuous bounded functions on $Z$ with bounded support, then the convergence is said to hold in the \emph{pointed measured Gromov--Hausdorff topology, or in the $\mathrm{pmGH}$ sense for short}.
\end{definition}

We need to recall a generalized $L^1$-notion of convergence for sets defined on a sequence of metric measure spaces converging in the pmGH sense. Such a definition is given in \cite[Definition 3.1]{AmbrosioBrueSemola19}, and it is investigated in \cite{AmbrosioBrueSemola19} capitalizing on the results in \cite{AmbrosioHonda17}.

\begin{definition}[$L^1$-strong and $L^1_{\mathrm{loc}}$ convergence]\label{def:L1strong}
Let $\{ (X_i, \dist_i, \mathfrak{m}_i, x_i) \}_{i\in \N}$  be a sequence of pointed metric measure spaces converging in the pmGH sense to a pointed metric measure space $ (Y, \dist_Y, \mu, y)$ and let $(Z,\dist_Z)$ be a realization as in \autoref{def:GHconvergence}.

We say that a sequence of Borel sets $E_i\subset X_i$ such that $\mathfrak{m}_i(E_i) < +\infty$ for any $i \in \N$ converges \emph{in the $L^1$-strong sense} to a Borel set $F\subset Y$ with $\mu(F) < +\infty$ if $\mathfrak{m}_i(E_i) \to \mu(F)$ and $\chi_{E_i}\mathfrak{m}_i \rightharpoonup \chi_F\mu$ with respect to the duality with continuous bounded functions with bounded support on $Z$.

We say that a sequence of Borel sets $E_i\subset X_i$ converges \emph{in the $L^1_{\mathrm{loc}}$-sense} to a Borel set $F\subset Y$ if $E_i\cap B_R(x_i)$ converges to $F\cap B_R(y)$ in $L^1$-strong for every $R>0$.
\end{definition}

\begin{definition}[Hausdorff convergence]
Let $\{ (X_i, \dist_i, \mathfrak{m}_i, x_i) \}_{i\in \N}$  be a sequence of pointed metric measure spaces converging in the pmGH sense to a pointed metric measure space $ (Y, \dist_Y, \mu, y)$.
Then we say that a sequence of closed sets $E_i\subset X_i$ converges \emph{in Hausdorff distance} (or \emph{in Hausdorff sense}) to a closed set $F\subset Y$ if there holds convergence in Hausdorff distance in a realization $(Z,\dist_Z)$ of the pmGH convergence as in \autoref{def:GHconvergence}.
\end{definition}

It is also possible to define notions of uniform convergence and $H^{1,2}$-strong and weak convergences for sequences of functions of a sequence of spaces $X_i$ converging in pointed measure Gromov--Hausdorff sense. We refer the reader to \cite{AmbrosioBrueSemola19, AmbrosioHonda17} for such definitions.

\subsection{\texorpdfstring{$\rm BV$}{BV} functions and sets of finite perimeter in metric measure spaces}
We begin with the definitions of \emph{function of bounded variation} and \emph{set of finite perimeter} in a m.m.s.
\begin{definition}[$\rm BV$ functions and perimeter on m.m.s.]\label{def:BVperimetro}
Let $(X,\dist,\meas)$ be a metric measure space.  Given $f\in L^1_{\mathrm{loc}}(X,\meas)$ we define
\[
|Df|(A) \eqdef \inf\left\{\liminf_i \int_A \lip f_i \de\meas \st \text{$f_i \in {\rm LIP}_{\mathrm{loc}}(A),\,f_i \to f $ in $L^1_{\mathrm{loc}}(A,\meas)$} \right\}\, ,
\]
for any open set $A\subseteq X$.
We declare that a function \(f\in L^1_{\mathrm{loc}}(X,\meas)\) is of \emph{local bounded variation}, briefly \(f\in{\rm BV}_{\mathrm{loc}}(X)\),
if \(|Df|(A)<+\infty\) for every \(A\subseteq X\) open bounded.
A function $f \in L^1(X,\meas)$ is said to be of \emph{bounded variation}, briefly $f\in{\rm BV}(X)$, if $|Df|(X)<+\infty$. 

If $E\subseteq\X$ is a Borel set and $A\subseteq X$ is open, we  define the \emph{perimeter $\Per(E,A)$  of $E$ in $A$} by
\[
\Per(E,A) \eqdef \inf\left\{\liminf_i \int_A \lip u_i \de\meas \st \text{$u_i \in {\rm LIP}_{\mathrm{loc}}(A),\,u_i \to \nchi_E $ in $L^1_{\mathrm{loc}}(A,\meas)$} \right\}\, ,
\]
in other words \(\Per(E,A)\coloneqq|D\nchi_E|(A)\).
We say that $E$ has \emph{locally finite perimeter} if $\Per(E,A)<+\infty$ for every open bounded set $A$. We say that $E$ has \emph{finite perimeter} if $\Per(E,X)<+\infty$, and we denote $\Per(E)\eqdef \Per(E,X)$.
\end{definition}

\medskip

In the sequel, we shall frequently make use of the following \emph{coarea formula}, proved in \cite{Miranda03}.
\begin{theorem}[Coarea formula]\label{thm:coarea}
Let \((X,\dist,\meas)\) be a metric measure space. Fix \(f\in L^1_{\mathrm{loc}}(X)\) and an open set \(\Omega\subseteq X\).
Then \(\R\ni t\mapsto \Per(\{f>t\},\Omega)\in[0,+\infty]\) is Borel measurable and
\[
|Df|(\Omega)=\int_\R \Per(\{f>t\},\Omega)\,\de t\, .
\]
\end{theorem}

\subsection{Sobolev functions, Laplacians and vector fields in metric measure spaces}
The \emph{Cheeger energy} on a metric measure space \((X,\dist,\meas)\) is defined as the \(L^2\)-relaxation of the functional
\(f\mapsto\frac{1}{2}\int\lip^2 f_n\di\meas\) (see \cite{AmbrosioGigliSavare11} after \cite{Cheeger99}). Namely, for any function \(f\in L^2(X)\) we define
\[
\Ch(f)\coloneqq\inf\bigg\{\liminf_{n\to\infty} \frac{1}{2}\int\lip^2 f_n\di\meas\;\bigg|\;(f_n)_{n\in\setN}\subseteq{\rm LIP}_{\mathrm{bs}}(X),\,f_n\to f\text{ in }L^2(X)\bigg\}\, .
\]
The \emph{Sobolev space} \(H^{1,2}(X)\) is defined as the finiteness domain \(\{f\in L^2(X)\,:\,\Ch(f)<+\infty\}\) of the Cheeger energy.
The restriction of the Cheeger energy to the Sobolev space admits the integral representation \(\Ch(f)=\frac{1}{2}\int|\nabla f|^2\di\meas\),
for a uniquely determined function \(|\nabla f|\in L^2(X)\) that is called the \emph{minimal weak upper gradient} of \(f\in H^{1,2}(X)\).
The linear space \(H^{1,2}(X)\) is a Banach space if endowed with the Sobolev norm
\[
\|f\|_{H^{1,2}(X)}\coloneqq\sqrt{\|f\|_{L^2(X)}^2+2\Ch(f)}=\sqrt{\|f\|_{L^2(X)}^2+\||\nabla f|\|_{L^2(X)}^2},\quad\text{ for every }f\in H^{1,2}(X)\, .
\]
Following \cite{Gigli12}, when \(H^{1,2}(X)\) is a Hilbert space (or equivalently \(\Ch\) is a quadratic form) we say that the metric measure
space \((X,\dist,\meas)\) is \emph{infinitesimally Hilbertian}.
\medskip

Hereafter, the infinitesimal Hilbertianity of \((X,\dist,\meas)\) will be our standing assumption.

The results of \cite{AmbrosioGigliSavare11_3} ensure that \({\rm LIP}_{\mathrm{bs}}(X)\) is dense in \(H^{1,2}(X)\) with respect to the
norm topology. We define the bilinear mapping \(H^{1,2}(X)\times H^{1,2}(X)\ni(f,g)\mapsto\nabla f\cdot\nabla g\in L^1(X)\) as
\[
\nabla f\cdot\nabla g\coloneqq\frac{|\nabla(f+g)|^2-|\nabla f|^2-|\nabla g|^2}{2},\quad\text{ for every }f,g\in H^{1,2}(X)\, .
\]
Given \(\Omega\subseteq X\) open, we define the \emph{local Sobolev space with Dirichlet boundary conditions} \(H^{1,2}_0(\Omega)\) as the closure
of \({\rm LIP}_{\mathrm{bs}}(\Omega)\) in \(H^{1,2}(X)\). Notice that \(H^{1,2}_0(X)=H^{1,2}(X)\). Moreover, we declare that a given function \(f\in L^2(\Omega)\)
belongs to the \emph{local Sobolev space} \(H^{1,2}(\Omega)\) provided \(\eta f\in H^{1,2}(X)\) for every \(\eta\in{\rm LIP}_{\mathrm{bs}}(\Omega)\) and {the function}
\[
|\nabla f|\coloneqq{\rm ess\,sup}\big\{\chi_{\{\eta=1\}}|\nabla(\eta f)|\;\big|\;\eta\in{\rm LIP}_{\mathrm{bs}}(\Omega)\big\}
\]
{belongs to $L^2(X)$.}
{Above, we employed the notation} \({\rm ess\,sup}_{\lambda\in\Lambda}h_\lambda\) to denote the essential supremum of a set
\(\{h_\lambda\}_{\lambda\in\Lambda}\) of measurable functions, {and denoted by $\chi$ the indicator function.}

\begin{definition}[Local Laplacian]
Let \((X,\dist,\meas)\) be an infinitesimally Hilbertian space and \(\Omega\subseteq X\) an open set. Then we say that a function
\(f\in H^{1,2}(\Omega)\) has \emph{local Laplacian} in \(\Omega\), \(f\in D(\Delta,\Omega)\) for short, if there exists a (uniquely determined)
function \(\Delta f\in L^2(\Omega)\) such that
\[
\int_\Omega g\Delta f\di\meas=-\int_\Omega\nabla g\cdot\nabla f\di\meas,\quad\text{ for every }g\in H^{1,2}_0(\Omega)\, .
\]
For brevity, we write \(D(\Delta)\) instead of \(D(\Delta,X)\).
\end{definition}
More generally, we work with functions having measure-valued Laplacian in an open set:
%
%
\begin{definition}[Measure-valued Laplacian]
Let \((X,\dist,\meas)\) be an infinitesimally Hilbertian space and \(\Omega\subseteq X\) an open set. Then we say that a function
\(f\in H^{1,2}(\Omega)\) has \emph{measure-valued Laplacian} in \(\Omega\), \(f\in D(\mathbf\Delta,\Omega)\) for short, provided there
exists a (uniquely determined) locally finite measure \(\mathbf\Delta f\) on \(\Omega\) such that
\[
\int_\Omega g\mathbf\Delta f\coloneqq\int_\Omega g\di\mathbf\Delta f=-\int_\Omega\nabla g\cdot\nabla f\di\meas\, ,\quad\text{ for every }g\in{\rm LIP}_{\mathrm{bs}}(\Omega)\, .
\]
For brevity, we write \(D(\mathbf\Delta)\) instead of \(D(\mathbf\Delta,X)\). Moreover, given functions \(f\in{\rm LIP}(\Omega)\cap H^{1,2}(\Omega)\)
and \(\eta\in C_b(\Omega)\), we say that \emph{\(\mathbf\Delta f\leq\eta\) in the distributional sense} if \(f\in D(\mathbf\Delta,\Omega)\) and
\(\mathbf\Delta f\leq\eta\meas\).
\end{definition}
The above two notions of Laplacian are consistent, in the following sense: given any \(f\in H^{1,2}(\Omega)\), it holds that \(f\in D(\Delta,\Omega)\)
if and only if \(f\in D(\mathbf\Delta,\Omega)\), \(\mathbf\Delta f\ll\meas\) and \(\frac{\di\mathbf\Delta f}{\di\meas}\in L^2(\Omega)\). If this is
the case, then we also have that the \(\meas\)-a.e.\ equality \(\Delta f=\frac{\di\mathbf\Delta f}{\di\meas}\) holds.
\medskip

The \emph{heat flow} \(\{P_t\}_{t\geq 0}\) on \((\X,\dist,\meas)\) is the gradient flow of the quadratic form \(\Ch\) in \(L^2(X)\). For any \(f\in L^2(X)\),
the gradient flow trajectory \([0,+\infty]\ni t\mapsto P_t f\in L^2(X)\) is a continuous curve with \(P_0 f=f\) that is locally absolutely continuous
in \((0,+\infty)\), and with \(P_t f\in D(\Delta)\) and \(\frac{\di}{\di t}P_t f=\Delta P_t f\) for a.e.\ \(t>0\). 
\medskip

By a \emph{bounded Sobolev derivation} on \((X,\dist,\meas)\) we mean a linear operator \(v\colon H^{1,2}(X)\to L^2(X)\) for which there exists
a function \(g\in L^\infty(X)\) satisfying \(|v(f)|\leq g|\nabla f|\) \(\meas\)-a.e.\ for every \(f\in H^{1,2}(X)\). The minimal (in the \(\meas\)-a.e.\ sense)
function \(g\) verifying this condition is denoted by \(|v|\in L^\infty(X)\) and called the \emph{pointwise norm} of \(v\).
We then define the space \(L^\infty(TX)\) of \emph{bounded vector fields} on \((X,\dist,\meas)\) as the family of all bounded Sobolev derivations
on \((X,\dist,\meas)\). The space \(L^\infty(TX)\) is a module over \(L^\infty(X)\) if endowed with the multiplication operator
\(L^\infty(X)\times L^\infty(TX)\ni(h,v)\mapsto h\cdot v\in L^\infty(TX)\) given by \((h\cdot v)(f)\coloneqq h v(f)\) for every \(f\in H^{1,2}(X)\).
Moreover, to any given function \(f\in H^{1,2}(X)\) with \(|\nabla f|\in L^\infty(X)\) we can associate its \emph{gradient} \(\nabla f\in L^\infty(TX)\),
which is characterized as the unique element of \(L^\infty(TX)\) satisfying \(\nabla f(g)=\nabla f\cdot\nabla g\) for every \(g\in H^{1,2}(X)\).
In particular, to use the notation
\[
v\cdot\nabla f\coloneqq v(f)\, ,\quad\text{ for every }v\in L^\infty(TX)\text{ and }f\in H^{1,2}(X)
\]
will cause no ambiguity. Observe also that the pointwise norm of \(\nabla f\) de facto coincides with the minimal weak upper gradient \(|\nabla f|\) of \(f\).
The above notions are essentially borrowed from \cite{Gigli14,Gigli17}, up to some technical subtleties one can easily figure out and deal with.
%
%
%
\begin{definition}[Essentially bounded divergence measure vector fields]
Let \((X,\dist,\meas)\) be an infinitesimally Hilbertian space. Then we say that an element \(v\in L^\infty(TX)\)
is an \emph{essentially bounded divergence measure vector field} if there exists a (uniquely determined) finite Radon measure
\({\rm div}(v)\) on \(X\) such that
\[
\int g\,{\rm div}(v)\coloneqq\int g\di({\rm div}(v))=-\int v\cdot\nabla g\di\meas\quad\text{ for every }g\in{\rm LIP}_c(X)\, .
\]
We denote by \(\mathcal{DM}^\infty(X)\) the family of all essentially bounded divergence measure vector fields.
\end{definition}
Similarly, one can also define the space \(\mathcal{DM}^\infty(\Omega)\) of locally essentially bounded divergence measure vector fields
in some open set \(\Omega\subseteq X\). Notice that, given any function \(f\in H^{1,2}(X)\) with \(|\nabla f|\in L^\infty(X)\),
it holds that \(\nabla f\in L^\infty(TX)\) is an essentially bounded divergence measure vector field if and only if \(f\in D(\mathbf\Delta)\)
and \(\mathbf\Delta f\) is finite. If this is the case, then we also have that \({\rm div}(\nabla f)=\mathbf\Delta f\). The analogous
property holds for Sobolev functions defined on an open set \(\Omega\).
\subsection{Geometric analysis on \texorpdfstring{$\RCD$}{RCD} spaces}\label{sec:RCD}

The focus of this paper will be on $\RCD(K,N)$ metric measure spaces $(X,\dist,\meas)$. 
We avoid giving a detailed introduction to this notion, addressing the reader to the survey \cite{AmbrosioSurvey} and references therein for the relevant background. 

For most of the results of this paper we will consider \(\RCD(K,N)\) spaces of the form \((X,\dist,\haus^N)\), for some \(K\in\setR\)
and \(N\in\N\). Notice that we are requiring that the dimension of the Hausdorff measure coincides with the upper dimensional bound
in the \(\RCD\) condition. These spaces are typically called \emph{non-collapsed \(\RCD\) spaces} (\({\rm ncRCD}(K,N)\) spaces for short) or $N$--dimensional $\RCD(K,N)$ spaces
(see \cite{Kitabeppu17,DePhilippisGigli18,MondinoKapovitch}).
\medskip

Below we recall some of the {less classical} properties that will be relevant for our purposes.
\medskip

In the setting of $\RCD(K,N)$ spaces it is possible to employ a viscosity interpretation of Laplacian bounds, in addition to the distributional one, see \cite{MoS21}.

\begin{definition}[Bounds in the viscosity sense for the Laplacian]\label{def:viscosity}
Let $(X,\dist,\meas)$ be an $\RCD(K,N)$ metric measure space and let $\Omega\subset X$ be an open and bounded domain. Let $f:\Omega\to\setR$ be locally Lipschitz and $\eta\in\Cb(\Omega)$. We say that $\Delta f\le \eta$ in the \textit{viscosity sense} in $\Omega$ if the following holds. For any $\Omega'\Subset\Omega$ and for any test function $\phi:\Omega'\to\setR$ such that 
\begin{itemize}
\item[(i)] $\phi\in D(\Delta, \Omega')$ and $\Delta\phi$ is continuous on $\Omega'$;
\item[(ii)] for some $x\in \Omega'$ it holds 
$\phi(x)=f(x)$ and $\phi(y)\le f(y)$ for any $y\in\Omega'$, $y\neq x$;
\end{itemize}
it holds
\begin{equation*}
\Delta \phi(x)\le \eta(x)\, .
\end{equation*} 
{ A function $\varphi$ as in items (i) and (ii) above will be called {\em lower supporting function of $f$}. When instead of $\leq$ we consider $\geq$ in the definition above, a function $\varphi$ as in items (i) and (ii) will be called {\em upper supporting function of $f$}.}
\end{definition}

We will rely on the equivalence between distributional and viscosity bounds on the Laplacian. The result is classical in the setting of smooth Riemannian manifolds and it has been extended in \cite[Theorem 3.24]{MoS21} to $\RCD(K,N)$ metric measure spaces $(X,\dist,\haus^N)$.

\begin{theorem}\label{thm:viscoimpldistri}
Let $(X,\dist,\haus^N)$ be an $\RCD(K,N)$ metric measure space. Let $\Omega\subset X$ be an open and bounded domain, $f:\Omega\to\setR$ be a Lipschitz function and $\eta:\Omega\to\setR$ be continuous. Then $\boldsymbol{\Delta} f\le \eta$ in the sense of distributions if and only if $\Delta f\le \eta$ in the viscosity sense.
\end{theorem}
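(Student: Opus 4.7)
The plan is to prove both implications via the maximum principle together with a suitable regularization in the RCD setting. The two available tools are: (i) a strong comparison principle for functions with controlled distributional Laplacian, and (ii) a way to manufacture regular supersolutions, either by sup-convolution $f^\eps(y):=\sup_{z\in\Omega}\bigl(f(z)-\tfrac{\dist^2(y,z)}{2\eps}\bigr)$ or by the heat semigroup $P_t f$. Both directions rely on the Laplacian comparison for the squared distance function on $\RCD(K,N)$ spaces to convert a priori geometric data into Laplacian bounds.

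For \emph{distributional implies viscosity} I would argue by contradiction. Fix $\Omega'\Subset\Omega$ and a test function $\phi\in D(\Delta,\Omega')$ touching $f$ from below at $x\in\Omega'$, and assume toward contradiction that $\Delta\phi(x)>\eta(x)$. By continuity of $\Delta\phi$ and $\eta$, there exist $\eps>0$ and $r>0$ such that $B_r(x)\Subset\Omega'$ and $\Delta\phi-\eta\ge\eps$ on $B_r(x)$. Set $g:=f-\phi$. Then $g\ge 0$ on $\Omega'$, $g(x)=0$, and $\boldsymbol\Delta g\le(\eta-\Delta\phi)\meas\le -\eps\meas$ distributionally on $B_r(x)$. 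Solving the Dirichlet problem $\Delta u=-\eps$ on $B_r(x)$ with $u=0$ on $\partial B_r(x)$ yields $u>0$ in $B_r(x)$ by the maximum principle, and $g-u$ is super-harmonic on $B_r(x)$ with non-negative boundary trace, so $g\ge u>0$ on $B_r(x)$. Evaluating at $x$ gives $0=g(x)\ge u(x)>0$, a contradiction.

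For \emph{viscosity implies distributional}, the plan is to regularize and pass to the limit. I would form the sup-convolution $f^\eps$, which is automatically $\tfrac{1}{\eps}$-semiconcave. In the $\RCD(K,N)$ setting, semiconcave functions inherit a distributional Laplacian upper bound from the comparison for $\dist^2$, and moreover the viscosity property transfers from $f$ to $f^\eps$ up to an infinitesimal, curvature-dependent error: for each $y$ the supremum defining $f^\eps$ is attained at some $z_\eps(y)$ close to $y$, and any test function touching $f^\eps$ from above at $y$ yields a translated test function touching $f$ from above at $z_\eps(y)$, whose Laplacian at $z_\eps(y)$ is $\le\eta(z_\eps(y))$. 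Combining this viscosity information on $f^\eps$ with its semiconcavity (which already provides Sobolev regularity and a measure-valued Laplacian), I would derive $\boldsymbol\Delta f^\eps\le\eta_\eps\meas$ in the distributional sense, with $\eta_\eps\to\eta$ locally uniformly. Since $f^\eps\to f$ locally uniformly as $\eps\downarrow 0$ and the Sobolev estimates on $f^\eps$ are locally uniform, I can pass to the limit in the weak formulation $\int g\,\boldsymbol\Delta f^\eps \le\int g\eta_\eps\di\meas$ for every $0\le g\in\Lip_{bs}(\Omega)$, obtaining the distributional bound for $f$.

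The main obstacle is the viscosity implies distributional direction, and within it, the quantitative propagation of the one-sided Laplacian bound through the regularization. Unlike the Euclidean case, sliding a parabola in the sup-convolution uses the squared distance on the ambient $\RCD$ space, whose Laplacian carries a non-trivial curvature error governed by $K$ and $N$. Controlling this error uniformly as $\eps\to 0$, and showing that the distributional bound for $f^\eps$ survives the limit against arbitrary non-negative Lipschitz test functions, is the delicate point; an alternative would be to use $P_t f$ and the Bakry--Émery/Kuwada contraction in place of $f^\eps$, but the combinatorics of curvature corrections is of comparable difficulty.
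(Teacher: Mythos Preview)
The paper does not prove this theorem; it is quoted as \cite[Theorem 3.24]{MoS21} and used as a black box. So there is no in-paper proof to compare against, and your proposal must stand on its own.

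Your first direction (distributional $\Rightarrow$ viscosity) is sound. The auxiliary Dirichlet problem is unnecessary, though: once $g:=f-\phi$ satisfies $g\ge 0$, $g(x)=0$, and $\boldsymbol\Delta g\le -\eps\meas$ on $B_r(x)$, the strong maximum principle for strictly superharmonic functions on $\RCD$ spaces (e.g.\ \cite{GigliRigoni}) already gives the contradiction directly.

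Your second direction has a genuine error in the choice of regularization and in the touching geometry. The sup-convolution $f^\eps(y)=\sup_z\bigl(f(z)-\tfrac{\dist^2(y,z)}{2\eps}\bigr)$ is $\tfrac{1}{\eps}$-\emph{semiconvex}, not semiconcave (in $\setR^N$, $f^\eps(y)+\tfrac{|y|^2}{2\eps}$ is a supremum of affine functions, hence convex). Semiconvexity gives a \emph{lower} distributional bound on the Laplacian, which is of no use for proving $\boldsymbol\Delta f\le\eta$. Moreover, the viscosity condition $\Delta f\le\eta$ in \autoref{def:viscosity} is tested against functions touching $f$ from \emph{below}, not above; your transfer-of-touching argument has the wrong sign. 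The correct regularization for a viscosity \emph{super}solution is the inf-convolution $f_\eps(y)=\inf_z\bigl(f(z)+\tfrac{\dist^2(y,z)}{2\eps}\bigr)$, which is semiconcave (hence already carries a distributional upper Laplacian bound) and for which a smooth function touching $f_\eps$ from below at $y$ transfers to a function touching $f$ from below at the minimizer $z_\eps(y)$, allowing the viscosity inequality to propagate. With inf-convolution in place, your outline (propagate the bound to $f_\eps$ with a curvature-dependent error from the Laplacian comparison for $\dist^2$, then let $\eps\downarrow 0$ in the weak formulation) is the right strategy and is essentially how \cite{MoS21} proceeds.
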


We refer also to \cite[Theorem 3.28]{MoS21} for other equivalent characterizations of bounds on the Laplacian in the setting of $\RCD$ spaces.

In the proof of \autoref{thm:Isoperimetriciintro} it will be important to relate the synthetic lower Ricci curvature bound to the stability of Laplacian bounds through the Hopf-Lax duality
\begin{equation}
    f^c(x):=\inf_{y\in X}\{f(y)+\dist(x,y)\}\, .
\end{equation}

The following statement corresponds to \cite[Theorem 4.9]{MoS21}. On a smooth Riemannian manifold, neglecting the regularity issues, it follows from the two-points Laplacian comparison proved in \cite{AndrewsClut14}, which is based on a computation with Jacobi fields. The proof in \cite{MoS21}, which works in a much more general setting, is based on the interplay between the Hopf-Lax semigroup and the heat flow put forward in \cite{Kuwada07,GigliKuwadaOhta,AmbrosioGigliSavare15Bochner}.

\begin{theorem}\label{thm:HLlapla}
Let $(X,\dist,\haus^N)$ be an $\RCD(K,N)$ metric measure space for some $K\in\setR$ and $1\le N<\infty$. Let $f:X\to\setR$ be a locally Lipschitz function with polynomial growth. Let $\Omega,\Omega'\subset X$ be open domains and $\eta\in\setR$. Then the following holds. Assume that $f^c$ is finite and that, for any $x\in\Omega'$ the infimum defining $f^c(x)$ is attained at some $y\in \Omega$. Assume also that
\begin{equation}\label{eq:lapassumption}
\Delta f\le \eta \quad\text{on $\Omega$}\,.
\end{equation}
Then
\begin{align*}
&\Delta f^c\le \eta -K\max_{x\in\Omega', y\in\Omega}\dist(x,y) \quad\text{on $\Omega'$},\quad  \text{if $K\le 0$, } \\
&\Delta f^c\le \eta -K\min_{x\in\Omega', y\in\Omega}\dist(x,y) \quad\text{on $\Omega'$},  \quad  \text{if $K\ge 0$,}
\end{align*}
 where the Laplacian bounds are intended either in the distributional or in the viscosity sense.
\end{theorem}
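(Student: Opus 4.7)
The plan is to establish the bound in the viscosity sense and then invoke \autoref{thm:viscoimpldistri} to pass to the distributional formulation. Fix $x_0 \in \Omega'$ and a viscosity test function $\phi$ touching $f^c$ from below at $x_0$, namely $\phi \in D(\Delta, U)$ with continuous Laplacian on some open $U \ni x_0$, $\phi \le f^c$ on $U$, and $\phi(x_0) = f^c(x_0)$. Let $y_0 \in \Omega$ realise the infimum $f^c(x_0) = f(y_0) + \dist(x_0, y_0)$ and set $L := \dist(x_0, y_0)$. The goal is the pointwise estimate $\Delta\phi(x_0) \le \eta - KL$; once this is established, the monotonicity of $L \mapsto -KL$ (nondecreasing if $K \le 0$, nonincreasing if $K \ge 0$) upgrades it to the uniform bound with $L$ replaced by $\max_{\Omega' \times \Omega}\dist$ (resp.\ $\min_{\Omega' \times \Omega}\dist$), as stated.

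The key bridge between the two base points is the inverse $c$-transform $\psi(y) := \sup_{x \in U}[\phi(x) - \dist(x, y)]$. From $\phi \le f^c$ it follows that $\psi \le f$ near $y_0$, since $\phi(x) - \dist(x, y) \le f^c(x) - \dist(x, y) \le f(y)$ for every $x,y$ by the definition of $f^c$. Evaluating the supremum at $x = x_0$ gives $\psi(y_0) \ge \phi(x_0) - L = f^c(x_0) - L = f(y_0)$, so $\psi(y_0) = f(y_0)$ with the supremum attained at $x_0$. Hence $\psi$ touches $f$ from below at $y_0$, and the assumption $\Delta f \le \eta$ on $\Omega$ in viscosity sense gives $\Delta\psi(y_0) \le \eta$, provided $\psi$ is admissible as a test function in the sense of \autoref{def:viscosity}.

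The crux is then the Laplacian stability estimate across $c$-conjugation
\[
\Delta\phi(x_0) + KL \le \Delta\psi(y_0),
\]
which combined with the previous step yields $\Delta\phi(x_0) \le \eta - KL$ and completes the proof.

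The main obstacle is this stability estimate, which is where the synthetic lower Ricci bound genuinely enters. In a smooth Riemannian setting with $\Ric \ge Kg$, one derives it via a Jacobi field / Riccati comparison along the minimising geodesic from $y_0$ to $x_0$: a second envelope computation identifies $\Delta\psi(y_0)$ with $\Delta\phi(x_0)$ up to a curvature correction, which is dominated by $-KL$ through the integrated Ricci lower bound along the geodesic. In the $\RCD(K,N)$ setting neither $\phi$ nor $\psi$ is regular enough to run a pointwise geodesic computation, so one relies on the functional characterisation of lower Ricci bounds developed in \cite{Kuwada07, AmbrosioGigliSavare15Bochner}: the Bakry--Émery gradient contraction for the heat flow and its intertwining with the Hopf--Lax semigroup provide the required transfer of Laplacian upper bounds across $c$-conjugation with loss exactly $KL$. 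The same regularisation (e.g.\ replacing $\phi$ by $P_t\phi$ and letting $t \downarrow 0$) also addresses the fact that $\psi$, being only locally Lipschitz, is not a priori admissible as a viscosity test function in the strict sense of \autoref{def:viscosity}, and needs to be approached by smoother functions before the bound on $\Delta f$ can be applied.
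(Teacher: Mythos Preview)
The paper does not prove this theorem at all: it is quoted verbatim as a preliminary result from \cite[Theorem 4.9]{MoS21}, with the remark that the proof there ``is based on the interplay between the Hopf--Lax semigroup and the heat flow put forward in \cite{Kuwada07,GigliKuwadaOhta,AmbrosioGigliSavare15Bochner}''. So there is no in-paper proof to compare against.

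Your outline correctly captures the architecture of the argument in \cite{MoS21}: reduce to the viscosity formulation, transfer the touching condition from $f^c$ at $x_0$ to $f$ at $y_0$ via the reverse $c$-transform $\psi$, and invoke a Laplacian stability estimate under Hopf--Lax duality to close the loop. However, what you have written is a roadmap, not a proof. The two substantive steps are both deferred: (i) the inequality $\Delta\phi(x_0)+KL\le\Delta\psi(y_0)$, which is the entire analytic content of the result and is precisely what \cite[Theorem 4.9]{MoS21} establishes via heat-flow regularisation and the Kuwada-type duality, and (ii) the admissibility of $\psi$ as a viscosity test function, which you correctly flag as problematic (a sup of $C^2$-type functions is only locally semiconvex, not in $D(\Delta)$ with continuous Laplacian) but do not resolve. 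Saying that ``regularisation by $P_t\phi$ addresses'' both issues is accurate in spirit but hides all the work: one needs the Bakry--\'Emery gradient estimate, the commutation between heat flow and Hopf--Lax semigroup, and a careful limit $t\downarrow 0$ that preserves the viscosity touching condition. If you want a self-contained proof you must carry these out; otherwise, citing \cite[Theorem 4.9]{MoS21} as the paper does is the honest option.
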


\medskip
Given a Borel set \(E\subseteq X\) in an \(\RCD(K,N)\) space \((X,\dist,\haus^N)\) and any \(t\in[0,1]\), we denote by \(E^{(t)}\) the set
of \emph{points of density \(t\)} of \(E\), namely
\[
E^{(t)}\coloneqq\bigg\{x\in X\;\bigg|\;\lim_{r\to 0}\frac{\haus^N(E\cap B_r(x))}{\haus^N(B_r(x))}=t\bigg\}\, .
\]
The \emph{essential boundary} of \(E\) is defined as \(\partial^e E\coloneqq X\setminus(E^{(0)}\cup E^{(1)})\). We have that \(E^{(t)}\)
and \(\partial^e E\) are Borel sets. Furthermore, the \emph{reduced boundary} \(\mathcal F E\subseteq\partial^e E\) of a given set \(E\subseteq X\)
of finite perimeter is defined as the set of those points of \(X\) where the unique tangent to \(E\) (up to isomorphism) is the
half-space \(\{x=(x_1,\ldots,x_N)\in\setR^N\,:\,x_N>0\}\) in \(\setR^N\); see \cite[Definition 4.1]{AmbrosioBrueSemola19} for the precise
notion of convergence we are using. 

It was proved in \cite{BruePasqualettoSemola} after \cite{Ambrosio02,AmbrosioBrueSemola19} that the perimeter measure \(\Per(E,\cdot)\)
can be represented as
\begin{equation}\label{eq:RepresentationPerimeter}
\Per(E,\cdot)=\haus^{N-1}|_{\mathcal F E}.
\end{equation}

As it is evident from \eqref{eq:RepresentationPerimeter}, the notion of perimeter that we are using does not charge the boundary of the space under
consideration, if any.

We refer to \cite{DePhilippisGigli18,MondinoKapovitch,BrueNaberSemola20} for the relevant background about boundaries of $\RCD(K,N)$ spaces $(X,\dist,\haus^N)$.

Moreover, we recall that, according to \cite[Proposition 4.2]{BPSGaussGreen}, 
\[
\mathcal F E=E^{(1/2)}=\bigg\{x\in X\;\bigg|\;\lim_{r\to 0}\frac{\haus^N(E\cap B_r(x))}{\haus^N(B_r(x))}=\frac{1}{2}\bigg\}\, ,
\quad\text{ up to }\mathcal H^{N-1}\text{-null sets}\, .
\]

We will rely on a Gauss--Green integration by parts formula for essentially bounded divergence measure fields over sets of finite perimeter.


 As shown in \cite[Section 5]{BPSGaussGreen} after \cite{BuffaComiMiranda}, for any \(v\in\mathcal{DM}^\infty(X)\) and any set of finite perimeter $E$,
there exist measures \(D\chi_E(\chi_E v)\), \(D\chi_E(\chi_{E^c}v)\) on \(X\) such that \(((\chi_E v)\cdot\nabla P_t\chi_E)\meas
\rightharpoonup D\chi_E(\chi_E v)\) and \(((\chi_{E^c}v)\cdot\nabla P_t\chi_E)\meas\rightharpoonup D\chi_E(\chi_{E^c}v)\) as \(t\to 0\)
with respect to the narrow topology. Moreover, it holds that \(D\chi_E(\chi_E v),D\chi_E(\chi_{E^c}v)\ll\Per(E,\cdot)\). Then we define
\begin{equation}\label{eq:intextnorm}
(v\cdot\nu_E)_{\rm int}\coloneqq\frac{1}{2}\frac{\di(D\chi_E(\chi_E v))}{\di\Per(E,\cdot)}\, ,\quad
(v\cdot\nu_E)_{\rm ext}\coloneqq\frac{1}{2}\frac{\di(D\chi_E(\chi_{E^c}v))}{\di\Per(E,\cdot)}\, .
\end{equation}
We remark that there is full consistency with the classical setting {of Riemannian manifolds}.

%
%

The following result was proved in \cite[Theorem 5.2]{BPSGaussGreen} by building upon \cite[Theorem 6.22]{BuffaComiMiranda}.
\begin{theorem}[Gauss--Green]\label{thm:GaussGreen}
Let \((X,\dist,\haus^N)\) be an \(\RCD(K,N)\) space and \(E\subseteq X\) a set of finite perimeter.
Fix \(v\in\mathcal{DM}^\infty(X)\). Then \((v\cdot\nu_E)_{\rm int},(v\cdot\nu_E)_{\rm ext}\in L^\infty(\mathcal F E,\Per(E,\cdot))\) and
\[\begin{split}
\|(v\cdot\nu_E)_{\rm int}\|_{L^\infty(\mathcal F E,\Per(E,\cdot))}&\leq\|v\|_{L^\infty(E)},\\
\|(v\cdot\nu_E)_{\rm ext}\|_{L^\infty(\mathcal F E,\Per(E,\cdot))}&\leq\|v\|_{L^\infty(E^c)}.
\end{split}\]
Moreover, the \emph{Gauss--Green integration-by-parts formulas} hold: for any \(\varphi\in{\rm LIP}_c(X)\) it holds
\[\begin{split}
\int_{E^{(1)}}\varphi\,{\rm div}(v)+\int_E v\cdot\nabla\varphi\di\meas&=-\int_{\mathcal F E}\varphi(v\cdot\nu_E)_{\rm int}\di\Per(E,\cdot)\, ,\\
\int_{E^{(1)}\cap\mathcal F E}\varphi\,{\rm div}(v)+\int_E v\cdot\nabla\varphi\di\meas&=-\int_{\mathcal F E}\varphi(v\cdot\nu_E)_{\rm ext}\di\Per(E,\cdot)\, .
\end{split}\]
\end{theorem}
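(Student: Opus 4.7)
The plan is to prove the two integration-by-parts formulas simultaneously by approximating $\chi_E$ by its heat-flow evolution $P_t\chi_E$, which is locally Lipschitz with essentially bounded weak upper gradient, deriving the corresponding identity at level $t>0$, and then passing to the limit $t\to 0$ exploiting the narrow convergences that define $D\chi_E(\chi_E v)$ and $D\chi_E(\chi_{E^c}v)$.

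\textbf{Step 1 (regularised identity).} Fix $\varphi\in{\rm LIP}_c(X)$. For any $t>0$ the function $P_t\chi_E$ is bounded and has essentially bounded gradient (on the support of $\varphi$), so $\varphi P_t\chi_E$ can be paired with $v\in\mathcal{DM}^\infty(X)$ via the standard Leibniz rule for divergence, yielding
\begin{equation*}
\int\varphi P_t\chi_E\,\di(\mathrm{div}(v))+\int\varphi\,v\cdot\nabla P_t\chi_E\,\di\meas+\int P_t\chi_E\,v\cdot\nabla\varphi\,\di\meas=0.
\end{equation*}

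\textbf{Step 2 (decomposition and passage to the limit).} Split $v=\chi_E v+\chi_{E^c}v$ in the middle integrand. The narrow convergences stated above \eqref{eq:intextnorm} give, as $t\to 0$,
\begin{equation*}
\int\varphi(\chi_E v)\cdot\nabla P_t\chi_E\,\di\meas\longrightarrow 2\int_{\mathcal{F}E}\varphi(v\cdot\nu_E)_{\rm int}\,\di\Per(E,\cdot),
\end{equation*}
and analogously for the exterior piece with $(v\cdot\nu_E)_{\rm ext}$; while $\int P_t\chi_E\,v\cdot\nabla\varphi\,\di\meas\to\int_E v\cdot\nabla\varphi\,\di\meas$ by $L^1_{\rm loc}$ convergence of $P_t\chi_E$ to $\chi_E$. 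For the first term I use that by the half-space tangent characterisation of reduced-boundary points and Gaussian-type heat kernel asymptotics in the non-collapsed setting, the pointwise limits $P_t\chi_E(x)\to 1$ on $E^{(1)}$, $P_t\chi_E(x)\to 0$ on $E^{(0)}$, and $P_t\chi_E(x)\to 1/2$ on $\mathcal F E$ hold; dominated convergence against $\mathrm{div}(v)$ (using the finiteness of $\mathrm{div}(v)$ on bounded sets and the bound $\|P_t\chi_E\|_\infty\le 1$) identifies the limit of $\int\varphi P_t\chi_E\,\di(\mathrm{div}(v))$.

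\textbf{Step 3 (extracting the two separate formulas).} Repeating the same argument with $\chi_{E^c}=1-\chi_E$ in place of $\chi_E$ (equivalently, applying the machinery to $E^c$) produces a companion identity in which the roles of interior and exterior traces are swapped. The two limiting identities are compatible only if the restriction of $\mathrm{div}(v)$ to $\mathcal F E$ is absolutely continuous with respect to $\Per(E,\cdot)$ and satisfies the jump relation $\mathrm{div}(v)|_{\mathcal F E}=[(v\cdot\nu_E)_{\rm int}-(v\cdot\nu_E)_{\rm ext}]\,\Per(E,\cdot)$. Substituting this relation isolates the contribution of $\mathcal F E$ to either $E^{(1)}$ or $E^{(1)}\cup\mathcal F E$ side, producing the two stated integration-by-parts formulas.

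\textbf{Step 4 ($L^\infty$ bounds) and main obstacle.} The $L^\infty$ bounds follow directly from the definitions: the pointwise inequality $|(\chi_E v)\cdot\nabla P_t\chi_E|\le\|v\|_{L^\infty(E)}|\nabla P_t\chi_E|$ is inherited by the narrow limit, so $|D\chi_E(\chi_E v)|\le 2\|v\|_{L^\infty(E)}\Per(E,\cdot)$, and the factor $\tfrac12$ in \eqref{eq:intextnorm} converts this into the bound $\|(v\cdot\nu_E)_{\rm int}\|_{L^\infty(\mathcal F E,\Per)}\le\|v\|_{L^\infty(E)}$ (and symmetrically for the exterior trace). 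The principal obstacle is the sharp identification, in Step 2, of the pointwise limit $P_t\chi_E\to 1/2$ on $\mathcal F E$ for $\Per(E,\cdot)$-a.e.\ point, since this requires rather delicate Gaussian heat-kernel estimates at regular points and uses crucially the non-collapsed assumption $\meas=\haus^N$ together with the half-space tangent structure of $\mathcal F E$ developed in \cite{AmbrosioBrueSemola19,BruePasqualettoSemola}.
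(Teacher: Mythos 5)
This theorem is not proved in the paper at all: it is quoted from \cite[Theorem 5.2]{BPSGaussGreen}, which builds on \cite[Theorem 6.22]{BuffaComiMiranda}, and your heat-semigroup regularisation of $\chi_E$ is indeed the strategy of that proof. Steps 1 and 2 are sound in outline, although besides the convergence $P_t\chi_E\to 1,0,\tfrac12$ on $E^{(1)},E^{(0)},\mathcal F E$ that you flag, you also need (and do not mention) that $|{\rm div}(v)|$ gives no mass to $\partial^e E\setminus\mathcal F E$; the Federer-type results only make this set $\haus^{N-1}$-null, so one must additionally invoke the absolute continuity of ${\rm div}(v)$ with respect to the codimension-one Hausdorff measure before dominated convergence can identify $\lim_t\int\varphi P_t\chi_E\,{\rm div}(v)$.

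The genuine gap is Step 3. Passing to the limit in the regularised identity yields exactly \emph{one} linear relation, namely (up to the normalisation convention of \eqref{eq:intextnorm}) the \emph{sum} of the two asserted Gauss--Green formulas:
\begin{equation*}
\int\varphi\Big(\chi_{E^{(1)}}+\tfrac12\chi_{\mathcal F E}\Big)\,{\rm div}(v)+\int_E v\cdot\nabla\varphi\,\di\meas
=-\lambda\int_{\mathcal F E}\varphi\big[(v\cdot\nu_E)_{\rm int}+(v\cdot\nu_E)_{\rm ext}\big]\di\Per(E,\cdot)
\end{equation*}
for a fixed universal constant $\lambda$. Running the same argument for $E^c$ does \emph{not} produce an independent relation: since $P_t\chi_{E^c}=1-P_t\chi_E$, $(E^c)^{(1)}=E^{(0)}$, $\mathcal F E^c=\mathcal F E$ and $(v\cdot\nu_{E^c})_{\rm int}=-(v\cdot\nu_E)_{\rm ext}$, the limit identity for $E^c$ collapses, after subtracting the global identity $\int\varphi\,{\rm div}(v)=-\int v\cdot\nabla\varphi\,\di\meas$, to the very same displayed relation. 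Hence the two identities are automatically compatible, and compatibility cannot force the jump relation ${\rm div}(v)\res\mathcal F E=[(v\cdot\nu_E)_{\rm int}-(v\cdot\nu_E)_{\rm ext}]\,\Per(E,\cdot)$: that relation is precisely the \emph{difference} of the two target formulas and carries information your scheme never generates. Decoupling the interior and exterior formulas requires a second, genuinely independent identity --- for instance testing ${\rm div}(v)$ against $\varphi\,(P_t\chi_E)^2$, whose limit weights $\mathcal F E$ by $\tfrac14$ rather than $\tfrac12$ and brings in $\lim_t(P_t\chi_E\,v\cdot\nabla P_t\chi_E)\meas$, or first proving a Leibniz rule showing that $\chi_E v\in\mathcal{DM}^\infty(X)$ with an identified divergence measure. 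A further minor point: the sharp trace bounds $\|(v\cdot\nu_E)_{\rm int}\|_{L^\infty}\le\|v\|_{L^\infty(E)}$ need the one-sided concentration $(\chi_E|\nabla P_t\chi_E|)\meas\rightharpoonup\tfrac12\Per(E,\cdot)$, not merely $|\nabla P_t\chi_E|\meas\rightharpoonup\Per(E,\cdot)$, and this again rests on the blow-up analysis at reduced boundary points. (Incidentally, you correctly read $E^{(1)}\cap\mathcal F E$ in the second formula as $E^{(1)}\cup\mathcal F E$; the intersection is empty, so the statement as printed contains a typo.)
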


Next we report on the {natural} behaviour of interior/exterior normal traces over the boundary of the intersection of two finite perimeter sets,
{which has been investigated in \cite{BPSGaussGreen}.}


Given two sets of finite perimeter \(E,F\subseteq X\), it is well-known that also \(E\cap F\) has finite perimeter. The set
\(\{\nu_E=\nu_F\}\) introduced in \cite{BPSGaussGreen} can be characterized as
\[
\{\nu_E=\nu_F\}=\bigg\{x\in\mathcal F E\cap\mathcal F F\;\bigg|\;\lim_{r\to 0}\frac{\haus^N((E\cap F)\cap B_r(x))}{\haus^N(B_r(x))}=\frac{1}{2}\bigg\},
\quad\text{ up to }\haus^{N-1}\text{-null sets}\, ,
\]
as it follows from the results of \cite{BPSGaussGreen}. The ensuing statement is taken from \cite[Proposition 5.4]{BPSGaussGreen}.
\begin{proposition}\label{prop:5.4BPS}
Let \((X,\dist,\haus^N)\) be an \(\RCD(K,N)\) space. Let \(E,F\subseteq X\) be two sets of finite perimeter. Let \(v\in\mathcal{DM}^\infty(X)\) be given.
Then it holds that
\[\begin{split}
(v\cdot\nu_E)_{\rm int}=(v\cdot\nu_F)_{\rm int}&,\quad\haus^{N-1}\text{-a.e.\ on }\{\nu_E=\nu_F\}\, ,\\
(v\cdot\nu_{E\cap F})_{\rm int}=(v\cdot\nu_E)_{\rm int}&,\quad\Per(E,\cdot)\text{-a.e.\ in }F^{(1)}\, ,\\
(v\cdot\nu_{E\cap F})_{\rm int}=(v\cdot\nu_E)_{\rm int}&,\quad\haus^{N-1}\text{-a.e.\ on }\{\nu_E=\nu_F\}\, .
\end{split}\]
\end{proposition}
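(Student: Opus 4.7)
The plan is to exploit the key principle that the interior normal trace $(v\cdot\nu_E)_{\rm int}$ is a local object on the reduced boundary, in the sense that its value at a point depends only on the infinitesimal structure of $E$ and of $v$ near that point. I would therefore reduce each identity to the corresponding pointwise infinitesimal comparison of the sets $E$, $F$, $E\cap F$, using the heat-flow definition of the measures $D\chi_E(\chi_E v)$, the Gauss--Green formula from \autoref{thm:GaussGreen}, and the representation of the perimeter measure in \eqref{eq:RepresentationPerimeter}.

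I would begin with the second identity, which is the most direct. Since $(E\cap F)\triangle E\subseteq F^c$, the density-$1$ condition on $F^{(1)}$ gives, for $\haus^{N-1}$-a.e.\ $x\in\mathcal{F}E\cap F^{(1)}$, that $E$ and $E\cap F$ have the same half-space tangent at $x$; in particular such points also lie in $\mathcal{F}(E\cap F)$ and the two reduced-boundary measures coincide on $\mathcal{F}E\cap F^{(1)}$. For any $\varphi\in\Lip_c(X)$ supported in a small neighborhood of such a point, I would apply \autoref{thm:GaussGreen} to both $E$ and $E\cap F$. The bulk integrals $\int_E v\cdot\nabla\varphi\di\meas$ and $\int_{E\cap F}v\cdot\nabla\varphi\di\meas$ differ by $\int_{E\cap F^c}v\cdot\nabla\varphi\di\meas$, which vanishes relative to the boundary scale as the support shrinks onto a density-$1$ point of $F$; an analogous matching controls the $\mathrm{div}(v)$ terms. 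Passing to the limit yields the pointwise equality of the normal traces on $\mathcal{F}E\cap F^{(1)}\cap\mathcal{F}(E\cap F)$, i.e.\ $\Per(E,\cdot)$-a.e.\ in $F^{(1)}$.

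For the first identity I would exploit the characterization of $\{\nu_E=\nu_F\}$ recalled just before the statement: at $\haus^{N-1}$-a.e.\ point of this set, $E\cap F$ has density $1/2$, so both $E\triangle (E\cap F)$ and $F\triangle (E\cap F)$ have density $0$, which forces $E$ and $F$ to have coincident half-space tangents. Consequently $\Per(E,\cdot)$, $\Per(F,\cdot)$ and $\haus^{N-1}$ all agree after restriction to $\{\nu_E=\nu_F\}$. Since both $D\chi_E(\chi_E v)$ and $D\chi_F(\chi_F v)$ are absolutely continuous with respect to the respective perimeters, it suffices to match their Radon--Nikodym derivatives on this set. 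I would do so by testing the narrow limits $(\chi_E v\cdot\nabla P_t\chi_E)\meas\rightharpoonup D\chi_E(\chi_E v)$ and the analogous one for $F$ against $\varphi\in\Lip_c(X)$ supported near a point of $\{\nu_E=\nu_F\}$, exploiting the asymptotic identity of $P_t\chi_E$ and $P_t\chi_F$ at such points (which follows from the fact that $\chi_E$ and $\chi_F$ coincide up to $\haus^N$-null symmetric difference on any neighborhood, once the blow-ups match). The third identity then follows by combining the first two: on $\{\nu_E=\nu_F\}$ the intersection $E\cap F$ shares the common half-space tangent of $E$ and $F$, so $\nu_{E\cap F}=\nu_E$ there $\haus^{N-1}$-a.e., and the localization argument used for the second identity applies.

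The main technical obstacle is the intrinsic non-locality of the heat-flow limits defining the normal trace measures: converting pointwise infinitesimal equalities of sets into equalities of narrow limits requires uniform control away from the boundary point. In the Euclidean setting this is routinely handled by cutoffs, but in the $\RCD$ framework one has to lean on the fine properties of the heat semigroup (continuity in $L^2$, self-adjointness, and the Leibniz rule for the divergence of products $\chi_E v$ from the discussion after the definition of $\mathcal{DM}^\infty(X)$) together with the rectifiability of $\mathcal{F}E$, which ensures that half-space tangents are available $\haus^{N-1}$-a.e.\ and that the various perimeter measures can be intercompared via their common representation as restrictions of $\haus^{N-1}$.
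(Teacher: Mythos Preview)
The paper does not actually prove this proposition: it is quoted verbatim from \cite[Proposition 5.4]{BPSGaussGreen} and used as a black box. So there is no in-paper argument to compare your sketch against; the relevant comparison is with the original proof in \cite{BPSGaussGreen}, which proceeds via the machinery developed there (pairings between divergence-measure fields and BV functions, together with the fine structure of the reduced boundary from \cite{BruePasqualettoSemola}).

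That said, your sketch has a concrete error and a genuine gap. The error is in the first identity: you write that the asymptotic matching of $P_t\chi_E$ and $P_t\chi_F$ near a point of $\{\nu_E=\nu_F\}$ ``follows from the fact that $\chi_E$ and $\chi_F$ coincide up to $\haus^N$-null symmetric difference on any neighborhood, once the blow-ups match''. This is false. Equality of the half-space tangents at $x$ only gives that $E\triangle F$ has \emph{density zero} at $x$, i.e.\ $\haus^N((E\triangle F)\cap B_r(x))=o(\haus^N(B_r(x)))$; it does not make $E\triangle F$ negligible in any fixed neighborhood. Since the heat kernel at time $t$ feels the set on a ball of scale roughly $\sqrt{t}$, a density-zero statement at a single point is not enough to control $P_t\chi_E-P_t\chi_F$ without a quantitative link between the blow-up rate and the heat-kernel decay, which you do not provide.

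The gap is the one you yourself flag at the end: the trace measures $D\chi_E(\chi_E v)$ are defined through narrow limits of heat-regularized quantities, and these are not a priori local. Your proposed remedy for the second identity (apply Gauss--Green on shrinking supports and observe that the bulk discrepancy is $o$ of the boundary term) is really a differentiation-of-measures argument, and to make it rigorous you would need to know that both trace measures are absolutely continuous with respect to a common doubling measure (here $\haus^{N-1}\res\mathcal F E$) and then invoke a Lebesgue--Besicovitch type theorem; but extracting the pointwise density from the integrated Gauss--Green identity still requires controlling the $\mathrm{div}(v)$ term on $E^{(1)}\cap B_r(x)$ versus $(E\cap F)^{(1)}\cap B_r(x)$, and $\mathrm{div}(v)$ is only a finite measure, not absolutely continuous, so its restriction to a density-zero set need not be negligible at the scale $r^{N-1}$. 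These issues are exactly what the careful arguments in \cite{BPSGaussGreen} are designed to handle.
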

We will apply the previous machinery to the level sets of distance-type functions obtained through Hopf-Lax duality, see \autoref{thm:HLlapla}. More specifically, we will need the following result,
which is taken from \cite[Proposition 6.1]{BPSGaussGreen}. 

%
%
\begin{proposition}\label{prop:6.1BPS}
Let \((X,\dist,\haus^N)\) be an \(\RCD(K,N)\) space. Let \(\Omega\Subset\Omega'\subseteq X\) be open domains and let \(\varphi\colon\Omega'\to\setR\)
be a \(1\)-Lipschitz function. Suppose that \(|\nabla\varphi|=1\) holds \(\haus^N\)-a.e.\ on \(\Omega'\) and that there exists a constant \(L\leq 0\)
such that \(\mathbf\Delta\varphi\geq L\) in the sense of distributions in \(\Omega'\), thus in particular \(\varphi\in D(\mathbf\Delta,\Omega')\).
Suppose further that \(\mathbf\Delta\varphi\) is finite. Then \(\{\varphi<t\}\) is a set of locally finite perimeter in \(\Omega\) for a.e.\ \(t\in\setR\)
such that \(\{\varphi=t\}\cap\Omega\neq\emptyset\) and it holds that
\[
(\nabla\varphi\cdot\nu_{\{\varphi<t\}})_{\rm int}=(\nabla\varphi\cdot\nu_{\{\varphi<t\}})_{\rm ext}=-1\, ,\quad\Per(\{\varphi<t\},\cdot)\text{-a.e.\ in }\Omega\, .
\]
\end{proposition}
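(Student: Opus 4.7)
The plan is to integrate the Gauss--Green formulas from \autoref{thm:GaussGreen} in the parameter $t$ and compare the resulting identities with the coarea formula. First, observe that $v\coloneqq\nabla\varphi\in\mathcal{DM}^\infty(\Omega')$: the essential boundedness $|\nabla\varphi|\le 1$ is immediate, and $\mathrm{div}(v)=\mathbf\Delta\varphi$ is a finite signed measure by hypothesis. The coarea formula \autoref{thm:coarea} applied to the Lipschitz function $\varphi$ gives immediately that $E_t\coloneqq\{\varphi<t\}$ has locally finite perimeter in $\Omega$ for a.e.\ $t\in\setR$, settling the first assertion. Since $\Omega\Subset\Omega'$ and $\varphi$ is Lipschitz, $\varphi(\Omega)$ is bounded; given any non-negative $\eta\in{\rm LIP}_c(\Omega)$, fix $t_0<t_1$ with $\varphi(\supp\eta)\subseteq(t_0,t_1)$.

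Applying the interior Gauss--Green identity of \autoref{thm:GaussGreen} to $E_t$, $v=\nabla\varphi$ and test function $\eta$, then integrating in $t\in(t_0,t_1)$ and applying Fubini, one obtains
\begin{equation*}
\int_\Omega\eta(t_1-\varphi)\,\di\mathbf\Delta\varphi+\int_\Omega(t_1-\varphi)\,\nabla\varphi\cdot\nabla\eta\,\di\haus^N=-\int_{t_0}^{t_1}\!\int_{\mathcal F E_t}\eta\,(\nabla\varphi\cdot\nu_{E_t})_{\rm int}\,\di\Per(E_t,\cdot)\,\di t\, ,
\end{equation*}
where I used that $E_t^{(1)}=E_t$ modulo sets negligible for both $\haus^N$ and $\mathbf\Delta\varphi$ for a.e.\ $t$, exploiting that the finite measure $\mathbf\Delta\varphi$ can charge at most countably many level sets of $\varphi$. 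The key step is rewriting $(t_1-\varphi)\nabla\varphi=\nabla(t_1\varphi-\varphi^2/2)$ and invoking the Leibniz rule for the measure-valued Laplacian together with $|\nabla\varphi|^2=1$ a.e.\ to obtain $\mathbf\Delta(t_1\varphi-\varphi^2/2)=(t_1-\varphi)\,\mathbf\Delta\varphi-\haus^N|_{\Omega'}$. Integration by parts then evaluates the second bulk term as $-\int\eta(t_1-\varphi)\,\di\mathbf\Delta\varphi+\int\eta\,\di\haus^N$, so the $\mathbf\Delta\varphi$ contributions cancel and we are left with $\int\eta\,\di\haus^N=-\int_{t_0}^{t_1}\!\int_{\mathcal F E_t}\eta\,(\nabla\varphi\cdot\nu_{E_t})_{\rm int}\,\di\Per(E_t,\cdot)\,\di t$.

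On the other hand, the coarea formula combined with $|\nabla\varphi|=1$ a.e.\ gives $\int\eta\,\di\haus^N=\int_{t_0}^{t_1}\!\int_{\mathcal F E_t}\eta\,\di\Per(E_t,\cdot)\,\di t$. Comparing the two expressions yields
\begin{equation*}
\int_{t_0}^{t_1}\!\int_{\mathcal F E_t}\eta\bigl[1+(\nabla\varphi\cdot\nu_{E_t})_{\rm int}\bigr]\,\di\Per(E_t,\cdot)\,\di t=0\, .
\end{equation*}
Since $\|(\nabla\varphi\cdot\nu_{E_t})_{\rm int}\|_{L^\infty(\mathcal F E_t,\Per(E_t,\cdot))}\le\|\nabla\varphi\|_{L^\infty(E_t)}\le 1$ by \autoref{thm:GaussGreen}, the integrand is non-negative; arbitrariness of $\eta\ge 0$ forces it to vanish for a.e.\ $t$, yielding $(\nabla\varphi\cdot\nu_{E_t})_{\rm int}=-1$ $\Per(E_t,\cdot)$-a.e.\ in $\Omega$ for a.e.\ $t$.

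The exterior trace identity is obtained by the parallel manipulation based on the second Gauss--Green formula of \autoref{thm:GaussGreen}. The crucial point that makes the same chain of computations work is that the reduced boundary $\mathcal F E_t$ consists of points of density $1/2$, so $\mathcal F E_t\subseteq\{\varphi=t\}$; since the finite measure $\mathbf\Delta\varphi$ can charge at most countably many level sets of $\varphi$, the integration domain arising in the exterior formula coincides with $E_t$ modulo $\mathbf\Delta\varphi$- and $\haus^N$-null sets for a.e.\ $t$, and the same Fubini-and-Leibniz argument produces $(\nabla\varphi\cdot\nu_{E_t})_{\rm ext}=-1$. The main technical obstacle is justifying the Leibniz rule $\mathbf\Delta(\varphi^2)=2\varphi\,\mathbf\Delta\varphi+2|\nabla\varphi|^2\haus^N$ for the Lipschitz function $\varphi$ with measure-valued Laplacian; this is routine from the integration-by-parts definition of $\mathbf\Delta$ combined with the product rule $\nabla(\varphi^2)=2\varphi\,\nabla\varphi$ for Sobolev gradients.
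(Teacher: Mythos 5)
This proposition is quoted from \cite{BPSGaussGreen} and the paper contains no proof of it, so there is no in-paper argument to compare against; judged on its own merits, your proof is correct and is essentially the expected one. The skeleton — integrate the interior (resp.\ exterior) Gauss--Green identity of \autoref{thm:GaussGreen} over the level parameter, use the chain rule $\mathbf\Delta(t_1\varphi-\varphi^2/2)=(t_1-\varphi)\,\mathbf\Delta\varphi-|\nabla\varphi|^2\haus^N$ together with $|\nabla\varphi|=1$ to cancel the $\mathbf\Delta\varphi$ contributions, compare with the weighted coarea formula, and conclude from saturation of the trace bound $|(\nabla\varphi\cdot\nu_{E_t})_{\rm int}|\le\|\nabla\varphi\|_{L^\infty}\le 1$ — is sound, and the sign bookkeeping is right (the interior normal of $\{\varphi<t\}$ is $-\nabla\varphi$ in the smooth picture). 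Your observation that $E_t^{(1)}\triangle E_t$ and $\mathcal F E_t$ are contained in $\{\varphi=t\}$, which is $(\haus^N+|\mathbf\Delta\varphi|)$-null for all but countably many $t$, is exactly what is needed to replace $E_t^{(1)}$ (resp.\ $E_t^{(1)}\cup\mathcal F E_t$) by $E_t$ in the bulk terms; and the passage from ``for each fixed $\eta\ge 0$, for a.e.\ $t$'' to ``for a.e.\ $t$, $\Per(E_t,\cdot)$-a.e.\ on $\Omega$'' should be made explicit via a countable family of test functions whose supports exhaust $\Omega$. The one step you should not wave away is the localization: \autoref{thm:GaussGreen} is stated for $v\in\mathcal{DM}^\infty(X)$ and sets of finite perimeter in $X$, whereas $\nabla\varphi$ and $E_t$ only live in $\Omega'$. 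One must multiply $\nabla\varphi$ by a cut-off $\chi\in{\rm LIP}_c(\Omega')$ equal to $1$ near $\supp\eta$ (so that ${\rm div}(\chi\nabla\varphi)=\chi\,\mathbf\Delta\varphi+\nabla\chi\cdot\nabla\varphi\,\haus^N$ is still a finite measure) and replace $E_t$ by $E_t\cap B$ for a finite-perimeter ball $B$ with $\supp\eta\Subset B\Subset\{\chi=1\}$, invoking the locality of normal traces (\autoref{prop:5.4BPS}) to check that nothing changes near $\supp\eta$. With that routine insertion the argument is complete.
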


The primary focus of this note will be isoperimetric sets, that, as in the classical Riemannian setting, are much more regular than general sets of finite perimeter.

\begin{definition}\label{def:MinimizerCompactVariations}
Let $(X,\dist,\meas)$ be a metric measure space. We say that a subset $E\subset X$ is a \textit{volume constrained minimizer for compact variations in $X$} if whenever $F\subset X$ is such that $E\Delta F\subset K\Subset X$, and $\meas(K\cap E)=\meas(K\cap F)$, then $\Per(E)\leq \Per(F)$.

We say that a subset $E\subset X$, with $\meas(E)<\infty$, is an \textit{isoperimetric set} whenever for any $F\subset X$ with $\meas(F)=\meas(E)$ we have that $\Per(E)\leq \Per (F)$. 

Notice that an isoperimetric set in $X$ is a fortiori a volume constrained minimizer for compact variations in $X$.
\end{definition}

Let us recall a topological regularity result for volume constrained minimizers borrowed from \cite{AntonelliPasqualettoPozzetta21}. {A similar regularity result for local perimeter minimizers on PI spaces (without volume constraints) was obtained earlier in \cite{KinnunenKorteLorentShan}.}

\begin{theorem}[{\cite[Theorem 1.3 and Theorem 1.4]{AntonelliPasqualettoPozzetta21}}]\label{thm:RegularityIsoperimetricSets}
    Let $(X,\dist,\haus^N)$ be an $\RCD(K,N)$ space with $2\leq N<+\infty$ natural number, $K\in\mathbb R$. Let $E$ be a volume constrained minimizer for compact variations in $X$. Then $E^{(1)}$ is open, $\partial^eE=\partial E^{(1)}$, and $\partial E^{(1)}$ is locally uniformly $(N-1)$-Ahlfors regular in $X$. 
    
    Assume further there exists $v_0>0$ such that $\haus^N(B_1(x))\geq v_0$ for every $x\in X$, and that $E\subset X$ is an isoperimetric region. Then $E^{(1)}$ is in addition bounded, and $\partial E^{(1)}$ is $(N-1)$-Ahlfors regular in $X$. 
\end{theorem}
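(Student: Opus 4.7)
My strategy would follow the De Giorgi--Federer paradigm for regularity in Geometric Measure Theory, adapted to the non-smooth setting using the theory of sets of finite perimeter on $\RCD$ spaces. The target is a pair of two-sided density estimates (both for $\haus^N$ and for the perimeter measure) from which all the topological claims follow by elementary arguments.

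\textbf{Step 1: A local almost-Euclidean relative isoperimetric inequality.} Since $(X,\dist,\haus^N)$ is non-collapsed, $\haus^N$-almost every point admits $\setR^N$ as unique tangent and has density $1$ by \eqref{eqn:VolumeConv}; the density is automatically bounded above by $1$ by lower semicontinuity. A pmGH-stability argument for the Euclidean relative isoperimetric inequality, using volume convergence and the Bishop--Gromov comparison, produces constants $c(N)$ and a locally uniform scale $r_0(x)>0$ such that
\[
\min\{\haus^N(F\cap B_r(x)),\,\haus^N(B_r(x)\setminus F)\}^{\frac{N-1}{N}}\le c(N)\,\Per(F,B_{2r}(x))
\]
for every set of finite perimeter $F$ and every $r<r_0(x)$. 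The fact that the reference measure is precisely $\haus^N$ is essential here, as it guarantees unit-density Euclidean tangents.

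\textbf{Step 2: Density estimates via volume-fixing comparison.} Let $E$ be a volume-constrained minimizer for compact variations and $x\in\partial^e E$. Test minimality against the competitor $E\setminus B_r(x)$, after absorbing the volume deficit in a fixed ball far from $x$ (the coarea formula shows that such a compensation costs at most $O(r^N)$ of perimeter). The resulting inequality, combined with Step 1 applied to $E$ inside $B_r(x)$, yields $\haus^N(E\cap B_r(x))\ge c\,r^N$ for $r$ small, and the symmetric comparison with $E\cup B_r(x)$ gives $\haus^N(B_r(x)\setminus E)\ge c\,r^N$. Inserting these two-sided volume bounds back into Step 1 and using the coarea formula produces the Ahlfors bounds
\[
c_1\, r^{N-1}\le \Per(E,B_r(x))\le c_2\, r^{N-1},
\]
and the identification \eqref{eq:RepresentationPerimeter} then gives local uniform $(N-1)$-Ahlfors regularity of $\partial^e E = \mathcal{F} E$.

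\textbf{Step 3: Openness of $E^{(1)}$ and identification with $\partial E^{(1)}$.} The complement density bound of Step 2 applied to points in $\partial^e E$ shows that such points cannot lie in $E^{(1)}$, so $E^{(1)}\subseteq X\setminus\partial^e E$. The analogous estimate applied to $E^c$ ensures that $E^{(0)}$ is likewise open. Hence $E^{(1)}$ is open and $\partial E^{(1)}\subseteq\partial^e E$, while the reverse inclusion follows because any point of the essential boundary has density-$1$ points in every neighborhood (by rectifiability of $\mathcal{F} E$ and $\Per(E,\cdot)=\haus^{N-1}\res \mathcal{F} E$).

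\textbf{Step 4: Boundedness of isoperimetric regions.} Assume now $\haus^N(B_1(y))\ge v_0$ for all $y$ and that $E$ is isoperimetric. A tail argument in the spirit of Morgan--Ritor\'e: if $E$ were unbounded, one could find disjoint unit balls $B_1(y_k)$ meeting $E$ with $\haus^N(E\cap B_1(y_k))\to 0$; by the density lower bound of Step~2 each such ball would carry a definite amount of perimeter. Translating the stray mass into a distinguished region of $X$ (possible thanks to the uniform lower bound $v_0$ on volumes) provides a competitor with strictly smaller perimeter, a contradiction. Once $E$ is bounded, the local Ahlfors estimate of Step~2 becomes global by a finite covering, giving $(N-1)$-Ahlfors regularity of $\partial E^{(1)}$ with uniform constants.

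\textbf{Main obstacle.} The crux is \textbf{Step 1}: producing a quantitative, local, almost-Euclidean relative isoperimetric inequality in the RCD setting with $\haus^N$ reference measure. This is precisely the step that exploits the non-collapsed hypothesis (unit-density Euclidean tangents) and would require genuinely new ideas in the weighted $\CD(K,N)$ framework with general reference measure $\meas$, as emphasized in the introduction.
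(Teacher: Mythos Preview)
The paper does not prove this theorem; it is quoted from \cite{AntonelliPasqualettoPozzetta21} (Theorems 1.3 and 1.4 there), and the only indication given here about the argument is that it ``builds on the top of a deformation lemma for general sets of finite perimeter'' and, as remarked in the introduction, crucially uses a local almost-Euclidean isoperimetric inequality (\cite[Proposition 3.1]{AntonelliPasqualettoPozzetta21}). Your outline matches that strategy exactly: your Step~1 is precisely the almost-Euclidean isoperimetric inequality, your Step~2 is the volume-fixing competitor argument that the deformation lemma makes rigorous, and Steps~3--4 are the standard consequences. One minor quibble: in Step~3 the inclusion $\partial^e E\subseteq\partial E^{(1)}$ follows directly from the volume density lower bound of Step~2 (which forces $E^{(1)}\cap B_r(x)\neq\emptyset$ for every $x\in\partial^e E$), not from rectifiability as you write.
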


In the following, when $E$ is an isoperimetric region in a space $X$ as in \autoref{thm:RegularityIsoperimetricSets}, we will always assume that $E$ coincides with its open bounded representative given by $E^{(1)}$.


\subsection{Localization of the curvature-dimension condition}

We will rely on the so-called localization of the curvature-dimension condition. We give some basic background about it and address the reader to \cite{CavallettiMilmanCD,CM17,CM20} for a detailed account about this topic, under much more general assumptions.
\medskip

Let us consider an $\RCD(K,N)$ metric measure space $(X,\dist,\meas)$ for some $K\in\setR$ and $1<N<\infty$. Let $\Omega\subset X$ be an open subset and let $f:X\to\setR$ be the signed distance function from $\Omega$, i.e.,
\begin{equation}
    f(x):=\dist(x,\overline{\Omega})=\inf\{\dist(x,y)\, :\, y\in \Omega\}\, ,\quad\text{if $x\in X\setminus\Omega$}\, ,
\end{equation}
and
\begin{equation}
    f(x):=-\dist(x,X\setminus\Omega)=-\inf\{\dist(x,y)\, :\, y\in X\setminus\Omega\}\, ,\quad\text{if $x\in \Omega$}\, .
\end{equation}
The signed distance function $f$ induces an $\meas$-almost everywhere partition of $X$ into geodesics $X_{\alpha}$ indexed over a set $Q$. On a smooth Riemannian manifold, these geodesics would correspond to gradient flow lines of the signed distance function, or, equivalently, to integral curves of $-\nabla f$.\\
Rays $X_{\alpha}$ are often identified with intervals of the real line via the ray map $\gamma_{\alpha}:I_{\alpha}\to X_{\alpha}$, where $I_{\alpha}\subset \setR$ is an interval and $\gamma_{\alpha}$ is an isometry.

The almost-everywhere partition of $X$ into \emph{transport rays} induced by the signed distance function $f$ determines the following disintegration formula: 
 \begin{equation}\label{eq:disintegration2}
\meas=\int_{Q} h_{\alpha} \haus^{1} \res X_{\alpha}\,  {\mathfrak{q}}(\di \alpha)\,.
 \end{equation}
 The non-negative measure  $\mathfrak{q}$ in \eqref{eq:disintegration2}, defined on the set of indices $Q$, is obtained in a natural way from the essential partition $(X_{\alpha})_{\alpha \in Q}$ of $X$, roughly by projecting $\meas$  on the set $Q$ of equivalence classes (we refer to \cite{CM20} for the details).

The key property is that, if $(X,\dist,\meas)$ is an $\RCD(K,N)$ metric measure space, then each $h_{\alpha}$ is a $\CD(K,N)$ density over the ray $X_{\alpha}$ (see \cite[Theorem 3.6]{CM20}), i.e.,
\begin{equation}\label{eq:localprelA}
(\log h_{\alpha})''\leq - K - \frac{1}{N-1} \big( (\log h_{\alpha})' \big)^{2},
\end{equation}
in the sense of distributions and point-wise except countably many points, compare with \cite[Lemma A.3, Lemma A.5, Proposition A.10]{CavallettiMilmanCD}. 
Equivalently
\begin{equation}\label{eq:localprel}
\left(h_{\alpha}^{\frac{1}{N-1}}\right)''+\frac{K}{N-1}h_{\alpha}^{\frac{1}{N-1}}\le 0\, ,
\end{equation}
in the sense of distributions. This amounts to say that the curvature-dimension condition of the ambient space $(X,\dist,\meas)$ is inherited by the needles of the partition induced by $f$.

\medskip

With the help of the localization technique, we will be able to turn some estimates into one-dimensional comparison results for solutions of Riccati equations. We introduce here the relevant notation for our purposes.

Let us introduce the comparison functions
\begin{equation}
    s_{k,\lambda}(r):=\cos_k(r)-\lambda\sin_k(r)\, ,
\end{equation}
where 
\begin{equation}
   \cos_k''+k\cos_k=0\, ,\quad \cos_k(0)=1\, ,\quad \cos_k'(0)=0\, , 
\end{equation}
and 
\begin{equation}
    \sin_k''+k\sin_k=0\, ,\quad\sin_k(0)=0\, ,\quad \sin_k'(0)=1\, .
\end{equation}

Notice that $s_{k,-d}$ is a solution of
\begin{equation}
    v''+kv=0\, ,\quad v(0)=1\, ,\quad v'(0)=d\, .
\end{equation}
Moreover, $s_{0,\lambda}(r)=1-\lambda r$.

%
%
%
Let us fix $N>1$, $H\in\setR$ and $K\in\setR$. Then we introduce the Jacobian function 
\begin{equation}\label{eq:Jacobian}
\setR\ni r\mapsto J_{H,K,N}(r):=\left(\cos_{\frac{K}{N-1}}(r)+\frac{H}{N-1} \sin_{\frac{K}{N-1}}(r)\right)^{N-1}_+=\left(s_{\frac{K}{N-1},-\frac{H}{N-1}}(r)\right)^{N-1}_+\, .
\end{equation}

Notice that, when $K=0$ the expression for the Jacobian function simplifies into
\begin{equation}
\setR\ni r\mapsto J_{H,N}(r):=\left(1+\frac{H}{N-1} r\right)^{N-1}_+\, .
\end{equation}
We stress that the function $J_{H,K,N}$ is precisely the one involved in the one-dimensional comparison of $\CD(K,N)$ densities, see \cite[Corollary 4.3]{Ketterer20}.

\section{The distance function from isoperimetric sets}

Let $(M,g)$ be a smooth $N$-dimensional Riemannian manifold with Ricci curvature bounded from below by $K\in\setR$. Let $E\subset M$ be a set of finite perimeter which is an isoperimetric region. Then, by the classical regularity theory for constrained perimeter minimizers \cite{Almgrenreg,GonzalezMassariTamanini,morgan2003regularity}, $E$ has an open representative and $\partial E$ is smooth away from a set $\partial_sE$ of Hausdorff dimension $\dim_H(\partial_sE)\le N-8$. Moreover, $\partial E\setminus \partial_sE$ has constant mean curvature $c\in\setR$, in the classical sense and $\partial_sE$ can be characterized as the set of those points in $\partial E$ such that the tangent cone is not included in a half-space, thanks to \cite{Almgrenreg}.\\

The classical proof of the L\'evy--Gromov inequality for manifolds with positive Ricci curvature \cite[Appendix C]{Gromovmetric} combines the regularity results mentioned above with a Heintze--Karcher type estimate \cite{HeintzeKarcher}. In particular, the regularity theorem from \cite{Almgrenreg} is used in a crucial way to overcome the possible lack of smoothness of the isoperimetric boundary. The proof gives in particular the following result, valid for any lower curvature bound.

\begin{theorem}\label{thm:laplsmooth}
Let $(M,g)$ be a smooth $N$-dimensional Riemannian manifold with Ricci curvature bounded from below by $K\in\setR$ and let $E\subset X$ be an isoperimetric set.
Then, denoting by $f$ the signed distance function from $\overline{E}$ and by $c$ the value of the constant mean curvature of $\partial E\setminus \partial_sE$, it holds
\begin{equation}\label{eq:meanccompsmooth}
\boldsymbol{\Delta} f\ge  -(N-1)\frac{s'_{\frac{K}{N-1},\frac{c}{N-1}}\circ \left(-f\right)}{s_{\frac{K}{N-1},\frac{c}{N-1}}\circ \left(-f\right)}\, \, \,      \quad\text{on $E$ and }\quad
\boldsymbol{\Delta} f\le (N-1)\frac{s'_{\frac{K}{N-1},-\frac{c}{N-1}}\circ f}{s_{\frac{K}{N-1},-\frac{c}{N-1}}\circ f}    \quad\text{on $X\setminus \overline{E}$}\, .
\end{equation}
\end{theorem}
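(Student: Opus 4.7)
The plan is to combine Almgren's regularity theorem with the classical Jacobi-field/Riccati comparison along normal geodesics emanating from the smooth part of $\partial E$, and then to upgrade the resulting pointwise inequality to a distributional one by checking that the singular contributions to $\boldsymbol{\Delta} f$ have the favorable sign.

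Set $\partial_r E := \partial E \setminus \partial_s E$, which is a smooth hypersurface on which the first variation formula (with a Lagrange multiplier for the volume constraint) gives constant mean curvature $c$. For the outer bound, fix $x \in \partial_r E$ and consider the unit-speed outward normal geodesic $\gamma_x(t) = \exp_x(t\nu_x)$. On the open set
\[
U_+ := \{\gamma_x(t) \,:\, x \in \partial_r E,\ 0 < t < t_{\mathrm{cut}}(x)\} \subseteq X \setminus \overline{E},
\]
the function $f$ is smooth and $f(\gamma_x(t)) = t$. Letting $J(x,t)$ be the Jacobian of the normal exponential map and setting $u(t) := J(x,t)^{1/(N-1)}$, the Jacobi equation combined with $\mathrm{Ric} \ge K$ and the Cauchy--Schwarz inequality applied to the principal curvatures of the equidistant hypersurface yields
\[
u''(t) + \frac{K}{N-1}\, u(t) \le 0, \qquad u(0) = 1, \quad u'(0) = \frac{c}{N-1}.
\]
Applying \autoref{lemma:Riccati} with $v(t) = s_{\frac{K}{N-1},-\frac{c}{N-1}}(t)$ gives $(\log u)'(t) \le v'(t)/v(t)$, whence $\Delta f = (N-1)(\log u)' \le (N-1)\, s'_{\frac{K}{N-1},-\frac{c}{N-1}}(f) / s_{\frac{K}{N-1},-\frac{c}{N-1}}(f)$ pointwise on $U_+$. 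The inner bound is obtained by applying the identical Riccati argument to $g := -f$ along inward normal geodesics from $\partial_r E$: the initial mean curvature of the equidistants with respect to their outward normal $\nabla g$ is $-c$, so one obtains the analogous upper bound on $\Delta g$ with comparison function $v = s_{\frac{K}{N-1},\frac{c}{N-1}}$, and the identity $\Delta f = -\Delta g$ together with $-f = g$ translates this into the desired lower bound on $E$.

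To upgrade the pointwise inequalities to distributional ones, observe that $(X \setminus \overline{E}) \setminus U_+$ has zero Riemannian volume: the cut locus of the smooth hypersurface $\partial_r E$ is Lebesgue-null, and $\exp(\partial_s E \times [0,\infty))$ has Hausdorff dimension at most $N-7$ thanks to the codimension bound $\dim_{\mathcal H}(\partial_s E) \le N - 8$. By the classical theorem of Mantegazza--Mennucci (following Wu and Calabi), $\boldsymbol{\Delta} f$ decomposes as its classical part on $U_+$ plus a non-positive singular measure concentrated on the cut locus of $\partial_r E$, and this sign is precisely the one compatible with the desired upper bound outside $\overline E$ (and, via the $g = -f$ reformulation, with the lower bound inside $E$). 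The main obstacle is ruling out \emph{positive} singular contributions to $\boldsymbol{\Delta} f$ concentrated on $\exp(\partial_s E \times [0,\infty))$: the remedy is to approximate $\partial_r E$ from outside by its smooth parallel hypersurface $\Sigma_\varepsilon := \{f = \varepsilon\}$ (smooth for almost every small $\varepsilon > 0$ by Sard's theorem), apply the classical Heintze--Karcher comparison to the smooth surface $\Sigma_\varepsilon$, and pass to the limit $\varepsilon \to 0$ using the convergence of perimeters, which is guaranteed by the $\haus^{N-1}$-negligibility of $\partial_s E$ furnished by Almgren's regularity. An analogous inner approximation yields the lower bound on $E$ and completes the argument.
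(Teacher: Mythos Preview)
The paper does not give a detailed proof of \autoref{thm:laplsmooth}; it presents the statement as the outcome of the classical L\'evy--Gromov argument (Almgren's regularity theorem combined with the Heintze--Karcher/Riccati comparison along normal geodesics), and reserves its effort for the new proof of \autoref{thm:Isoperimetrici}, which avoids regularity theory entirely. Your outline is exactly this classical route, so in that sense it agrees with what the paper has in mind.

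One substantive comment on your final paragraph. The approximation by level sets $\Sigma_\varepsilon = \{f = \varepsilon\}$ is both unnecessary and, as written, incomplete. It is incomplete because Sard's theorem gives smoothness of $\Sigma_\varepsilon$ for a.e.\ $\varepsilon$ but says nothing about a \emph{uniform} mean curvature bound, which is precisely what Heintze--Karcher requires; near points whose foot lies in $\partial_s E$ you have no a priori control, and you do not explain how the perimeter convergence would yield the distributional Laplacian inequality. It is unnecessary because the distance function $f = d(\cdot, \overline E)$ from any closed set is locally semiconcave on $X \setminus \overline E$ (it is an infimum of functions $d(\cdot, p)$, each locally semiconcave with a uniform constant by Hessian comparison on compact regions bounded away from $\overline E$). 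Semiconcavity alone forces $\boldsymbol{\Delta} f$ to be a Radon measure whose singular part is \emph{nonpositive} on $X \setminus \overline E$, regardless of where it is supported. Since your Riccati argument already gives the pointwise upper bound on the full-measure set $U_+$, the distributional upper bound follows immediately; the analogous semiconcavity of $-f = d(\cdot, X\setminus E)$ on $E$ handles the lower bound. Your dimension estimate on $\exp(\partial_s E \times [0,\infty))$, combined with the general fact $\boldsymbol{\Delta} f \ll \haus^{N-1}$ for bounded divergence-measure fields, gives an alternative clean route to the same conclusion.
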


\begin{remark}\label{rm:cmc}
The very same conclusion above holds assuming that $E$ is a domain with smooth boundary and that $\partial E$ has constant mean curvature $c$. The proof of this variant does not require the deep regularity theorem from \cite{Almgrenreg}, but the assumptions are not natural for the applications to the isoperimetric problem when $N\ge 8$.
\end{remark}

Notice that the bounds in \eqref{eq:meanccompsmooth} make perfectly sense on a metric measure space $(X,\dist,\meas)$, even though most of the ingredients of the classical proof that we recalled above do not. 
\medskip

By \cite[Theorem 1.3]{AntonelliPasqualettoPozzetta21}, a volume constrained minimizer $E$ for compact variations enjoys analogous topological regularity properties of an isoperimetric region. More precisely, $E^{(1)}$ is open and $\partial^eE=\partial E^{(1)}$.  Hence we will always identify such a set with its representative $E^{(1)}$.

\begin{theorem}\label{thm:Isoperimetrici}
Let $(X,\dist,\haus^N)$ be an $\RCD(K,N)$ metric measure space for some $K\in\setR$ and $N\ge 2$ and let $E\subset X$ be a set of finite perimeter. Assume that $E$ is a volume constrained minimizer for compact variations in $X$.
Then, denoting by $f$ the signed distance function from $\overline{E}$, there exists $c\in\setR$ such that
\begin{equation}\label{eq:sharplaplacian}
\boldsymbol{\Delta} f\ge  -(N-1)\frac{s'_{\frac{K}{N-1},\frac{c}{N-1}}\circ \left(-f\right)}{s_{\frac{K}{N-1},\frac{c}{N-1}}\circ \left(-f\right)}      \quad\text{on $E$ and }\quad
\boldsymbol{\Delta} f\le (N-1)\frac{s'_{\frac{K}{N-1},-\frac{c}{N-1}}\circ f}{s_{\frac{K}{N-1},-\frac{c}{N-1}}\circ f}    \quad\text{on $X\setminus \overline{E}$}\, .
\end{equation}

\end{theorem}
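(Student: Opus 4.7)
I would proceed in two stages: first establish an \emph{a-dimensional} version of \eqref{eq:sharplaplacian} (the formal $N\to\infty$ limit) with some common mean-curvature barrier $c\in\setR$, then upgrade to the sharp dimensional form via the localization machinery \eqref{eq:disintegration2}--\eqref{eq:localprelA}. Throughout, \autoref{thm:viscoimpldistri} allows me to freely interchange distributional and viscosity Laplacian bounds.

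\textbf{Finding the barrier via a volume-fixing competitor.} Call $c\in\setR$ an \emph{inner barrier} (resp.\ \emph{outer barrier}) if the a-dimensional lower (resp.\ upper) bound on $\boldsymbol\Delta f$ holds in the viscosity sense on $E$ (resp.\ $X\setminus\bar E$). Set $c_-:=\inf\{c:c\text{ is an inner barrier}\}$ and $c_+:=\sup\{c:c\text{ is an outer barrier}\}$, both finite by \autoref{thm:RegularityIsoperimetricSets} and standard Laplacian estimates for distance functions from locally Ahlfors regular sets. The task is to prove $c_-\leq c_+$, so that any $c\in[c_-,c_+]$ serves. I argue by contradiction: if $c_+<c_-$, pick $c\in(c_+,c_-)$. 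By the viscosity definition there exist a point $x_1\in E$ with a test function $\phi_1$ touching $f$ from above at $x_1$ and $\Delta\phi_1(x_1)$ strictly below the inner threshold, and a point $x_2\in X\setminus\bar E$ with a test function $\phi_2$ touching $f$ from below at $x_2$ and $\Delta\phi_2(x_2)$ strictly above the outer threshold. Propagate each defect by Hopf--Lax duality via \autoref{thm:HLlapla}: the Hopf--Lax duals $\psi_1,\psi_2$ are $1$-Lipschitz with $|\nabla\psi_i|=1$ a.e.\ and inherit strict one-sided Laplacian bounds in neighborhoods of $x_1,x_2$. Form the two-parameter family
\[
E_{s,t} := \bigl(E\setminus\{\psi_1>s\}\bigr)\cup\{\psi_2<t\}, \qquad s,t\in[0,\varepsilon_0],
\]
localized near $x_1$ and $x_2$. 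The coarea formula provides a smooth curve $\varepsilon\mapsto(s(\varepsilon),t(\varepsilon))$ with $s(0)=t(0)=0$ along which $\haus^N(E_\varepsilon)=\haus^N(E)$. Differentiating $\Per(E_\varepsilon)$ at $\varepsilon=0$ via the Gauss--Green formula \autoref{thm:GaussGreen} together with the normal-trace identities \autoref{prop:5.4BPS} and \autoref{prop:6.1BPS} applied to $\nabla\psi_1,\nabla\psi_2\in\mathcal{DM}^\infty$, both Laplacian defects contribute a definite negative term to $\didi{\varepsilon}\Per(E_\varepsilon)|_{\varepsilon=0}$; the volume-preserving coupling only fixes the ratio $s'(0)/t'(0)$, so strictness persists. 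This contradicts $E$ being a volume-constrained minimizer for compact variations, proving $c_-\leq c_+$.

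\textbf{Dimensional sharpening via localization.} With an a-dimensional barrier $c$ in hand, I would apply the needle decomposition \eqref{eq:disintegration2} associated to $f$. On $\mathfrak{q}$-almost every transport ray $X_\alpha$, parametrized by arclength with the origin at its intersection with $\partial E$, the conditional density $h_\alpha$ satisfies the $\CD(K,N)$ inequality \eqref{eq:localprelA}. The a-dimensional upper bound on $\boldsymbol\Delta f$ translates into the initial-slope condition $(\log h_\alpha)'(0^+)\leq c$ on the exterior side of the ray. Applying the Riccati comparison \autoref{lemma:Riccati} to $u:=h_\alpha^{1/(N-1)}$ yields $u\leq s_{K/(N-1),-c/(N-1)}$ together with
\[
(\log h_\alpha)'(r) \leq (N-1)\,\frac{s'_{K/(N-1),-c/(N-1)}(r)}{s_{K/(N-1),-c/(N-1)}(r)}, \qquad r>0.
\]
Reassembling these ray-wise inequalities via the disintegration gives the second inequality in \eqref{eq:sharplaplacian}; the symmetric argument on the interior side produces the first.

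\textbf{Main obstacle.} The heart of the proof is the volume-fixing competitor. For unconstrained perimeter minimizers \cite{MoS21} it suffices to slide a single distance-like level set to decrease the perimeter; here the volume constraint forces one to couple two Hopf--Lax flows on opposite sides of $\partial E$ and to extract a strictly negative first variation of the perimeter despite the balancing condition. Making this precise depends crucially on the survival of strict Laplacian defects under Hopf--Lax duality (\autoref{thm:HLlapla}) and on the Gauss--Green calculus for divergence-measure fields over sets of finite perimeter.
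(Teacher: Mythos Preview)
Your two-stage strategy (a-dimensional barrier via a volume-fixing competitor built from Hopf--Lax duals, then localization plus Riccati comparison) is exactly the paper's approach, and you have correctly identified \autoref{thm:HLlapla}, \autoref{thm:GaussGreen}, \autoref{prop:6.1BPS} and \autoref{lemma:Riccati} as the key tools.

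There is, however, a genuine gap in the competitor construction. You write that the perturbations are ``localized near $x_1$ and $x_2$'', but this is not where they live: the Hopf--Lax duals $\psi_1,\psi_2$ coincide with $f$ along the minimizing geodesics from $x_1,x_2$ to $\partial E$, and the sublevel/superlevel sets you cut along are compact perturbations of $E$ supported in neighbourhoods of the \emph{touching sets} $X_\Sigma,Y_\Sigma\subset\partial E$ of those geodesics, not near $x_1\in E$ and $x_2\in X\setminus\bar E$. Since $X_\Sigma$ and $Y_\Sigma$ both sit on $\partial E$, they may intersect, and then your two one-sided perturbations interfere and the formula for $E_{s,t}$ no longer gives disjoint contributions. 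The paper handles this by a case distinction: if $X_\Sigma\cap Y_\Sigma\neq\emptyset$ one reaches a contradiction directly from the strong maximum principle applied to $\eta-\phi\ge 0$ (which is superharmonic near the common touching point and vanishes there), and only when $X_\Sigma\cap Y_\Sigma=\emptyset$ does one build the two-parameter competitor. Two smaller points: your definitions of $c_-,c_+$ have the inf/sup reversed (as written, both are infinite and the argument collapses); and rather than ``differentiating $\Per(E_\varepsilon)$ at $\varepsilon=0$'', which is not a priori available, the paper obtains the strict perimeter drop by a direct finite estimate $\Per(E_{s,t})-\Per(E)\le -2\eps\,\haus^N(F)<0$ coming straight out of the Gauss--Green identity and the Laplacian bounds on $\phi,\eta$.
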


\begin{remark}
The bounds in \eqref{eq:sharplaplacian} can be understood in the sense of distributions or in the sense of viscosity, see \autoref{thm:viscoimpldistri}. The two perspectives will be both relevant for the sake of the proof while in the applications it will be important to rely mostly on the distributional perspective.
\end{remark}

\begin{remark}
Notice that the comparison function in \eqref{eq:sharplaplacian} are not well defined globally on $\setR$, but only on a maximal interval $I_{K,N,c}\subset \setR$. In the course of the proof we will show that the signed distance function from $\overline{E}$ can attain only values in $I_{K,N,c}$ where the bounds in \eqref{eq:sharplaplacian} perfectly make sense.
\end{remark}

{ The proof of \autoref{thm:Isoperimetrici} is based on a careful adaptation of the argument in \cite[Theorem 5.2]{MoS21} that dealt with local perimeter minimizers without volume constraints.} We outline the strategy for smooth Riemannian manifolds, avoiding the technicalities and focusing on the case $K=0$.\\ 
We consider a weaker statement, corresponding to the limit of \eqref{eq:sharplaplacian} as $N\to\infty$. The self-improvement of the adimensional bound to the sharp dimensional bound is based on a classical computation with Jacobi fields away from the cut-locus of the distance function.\\
If this weaker statement fails, then there are points $x\in X\setminus E$ and $y\in E$ such that $\Delta f(x)>\Delta f(y)$ in a weak sense. This is already a subtle point, where we exploit the viscosity theory, since the distance function is not globally smooth. Thanks to a perturbation argument, we construct a smooth function $g$, touching $f$ from below only at $x$, and a smooth function $h$, touching $f$ from above only at $y$, such that $\Delta g(x)>\Delta h(y)$.\\
Then we introduce the functions
\begin{equation}\label{eq:dualheur}
   \bar{g}(z):=\sup_{w}\{g(w)-\dist(w,z)\}\, ,\quad\, \bar{h}(z):=\inf_{w}\{h(w)+\dist(z,w)\}\, . 
\end{equation}
Heuristically, $\bar{g}$ and $\bar{h}$ behave like distance functions from their respective level sets. Moreover, the non-linear transformation in \eqref{eq:dualheur} maintains the bounds on the Laplacian, if the ambient has non-negative Ricci curvature. Notice that a weak perspective on Laplacian bounds needs to be considered again, since the transformation does not preserve the regularity, in general. Neglecting the regularity issues, the conclusion would follow from \cite{AndrewsClut14}, which is based on a computation with Jacobi fields.
\\ 
The idea is then to slide the level sets of the functions $\bar{g}$ and $\bar{h}$ until they start crossing the isoperimetric set $E$. When this happens, we cut $\partial E$ along the level sets of $\bar{g}$ and $\bar{h}$ making sure to balance the interior and exterior perturbations so that, globally, the perturbation has the same volume of $E$. {Some care is needed in order to make sure that the two perturbations have disjoint supports.} Eventually, we estimate the perimeter of the perturbation and compare it with the perimeter of $E$, reaching a contradiction. 

Notice that we do not need to rely on the full regularity theory for isoperimetric sets on Riemannian manifolds. For the sake of the proof it is sufficient to know that the \emph{measure theoretic} boundary of the isoperimetric set is closed, which follows from the local density estimates obtained in \cite{AntonelliPasqualettoPozzetta21} (see also the previous \cite{KinnunenKorteLorentShan} for the case of local perimeter minimizers without volume constraints).

\begin{proof}
The proof will be divided into two steps. In the first one we are going to prove the weaker adimensional bounds
\begin{equation}\label{eq:adimensional laplacian comparison}
\boldsymbol{\Delta} f\ge c- Kf\,\quad\text{on $E$ and }\quad \boldsymbol{\Delta} f\le c - Kf\, \quad\text{on $X\setminus \overline{E}$}\, ,
\end{equation}
corresponding to the limit as $N\to \infty$ of the bounds in \eqref{eq:sharplaplacian}.\\
In the second part of the proof we will show how to obtain the sharp dimensional bounds with a by now standard application of the localization technique from \cite{CM17,CM20}.\\
{We remark that if \eqref{eq:adimensional laplacian comparison} (\eqref{eq:sharplaplacian} respectively) holds in a neighbourhood of $\partial E$, then \eqref{eq:adimensional laplacian comparison} (\eqref{eq:sharplaplacian} respectively) holds globally. This statement can be verified with the very same argument that we will discuss in Step 2 of the proof below. We omit the details, as this observation will not be needed for the rest of the proof.}

\medskip

\textbf{Step 1.} We will prove \eqref{eq:adimensional laplacian comparison} in the case $K=0$. {The modifications needed to address the case $K\neq 0$ will be discussed at the end of the step, see also Step 6 in the proof of \cite[Theorem 5.2]{MoS21}.}

Observe that, by \autoref{thm:viscoimpldistri}, {\eqref{eq:adimensional laplacian comparison} in the case $K=0$} is equivalent to the following claim.\\
\textbf{Claim}: the supremum of the values of the Laplacians of lower supporting functions of $f$ (as in \autoref{def:viscosity})
at touching points on $X\setminus\overline{E}$ is lower than the infimum of the values of Laplacians of upper supporting functions of $f$ at touching points on $E$. Indeed, if this statement holds, then letting $c$ be any value between the supremum and the infimum of the two sets, then $\Delta f\le c$ holds on $X\setminus \overline{E}$ in the viscosity sense and $\Delta f\ge c$ holds on $E$, again in the viscosity sense. By \autoref{thm:viscoimpldistri}, \eqref{eq:adimensional laplacian comparison} holds also in the sense of distributions.
\\
Observe that also the converse implication holds, again by the same kind of arguments. Since we will not need this implication we omit its proof.
\medskip

Let us prove the claim.\\
We argue by contradiction. If it is not true,
then we can find $x\in X\setminus \overline{E}$, $y\in E$, $\lambda>0$, $\delta>0$ (that we think to be very small, {in particular, $\lambda<f(x)$ and $\delta<\lambda$}) and supporting functions $\overline{\psi}:X\to\setR$ and $\overline{\chi}:X\to\setR$ with the following properties:
\begin{itemize}
\item[ia)] $\overline{\psi}:X\to\setR$ is Lipschitz and 
{it belongs to the domain of the measure-valued}
Laplacian on $B_{\lambda}(x)$;
\item[iia)] $\overline{\psi}(x)=f(x)$;
\item[iiia)] $\overline{\psi}(z)< f(z)$ for any $z\neq x$ and $\overline{\psi}<f-\delta$ on $X\setminus B_{\lambda}(x)$;
\end{itemize}
and 
\begin{itemize}
\item[ib)] $\overline{\chi}:X\to\setR$ is Lipschitz and 
{it belongs to the domain of the measure-valued}
Laplacian on $B_{\lambda}(y)$;
\item[iib)] $\overline{\chi}(y)=f(y)$;
\item[iiib)] $\overline{\chi}(z)>f(z)$ for any $z\neq y$ and $\overline{\chi}>f+\delta$ on $X\setminus B_{\lambda}(y)$.
\end{itemize}
Moreover, there exist $c\in\setR$ and $\eps>0$ such that
\begin{equation}\label{eq:visccon1}
{\boldsymbol{\Delta}}\overline{\psi}\ge c+\eps\, \quad\text{on $B_{\lambda}(x)$}\, 
\end{equation}
and 
\begin{equation}\label{eq:visccon2}
{\boldsymbol{\Delta}}\overline{\chi}\le c-\eps\, \quad\text{on $B_{\lambda}(y)$}\, .
\end{equation}
Notice that \autoref{thm:viscoimpldistri} yields {a priori only the} existence of {locally defined} functions $\psi$ and $\chi$ verifying the weak inequalities $\psi\le f$ and $\chi\ge f$ in place of the strict ones iiia) and iiib).\\ 
{In order to obtain the strict inequalities in iiia) and iiib) it is sufficient to subtract the function $\eps'\dist(x,\cdot)^2$ to $\psi$ and to sum the function $\eps'\dist(y,\cdot)^2$ to $\chi$, for $\eps'>0$ small enough. In this way we obtain new auxiliary functions $\hat{\psi}$ and $\hat{\chi}$. The fact that the inequalities \eqref{eq:visccon1} and \eqref{eq:visccon2} are not affected by this additive perturbation, up to slightly decreasing the value of $\eps$, follows from the Laplacian comparison theorem, see \cite{Gigli12}.\\
In order to extend the locally defined functions $\hat{\psi}$ and $\hat{\chi}$ to globally defined functions $\overline{\psi}$ and $\overline{\chi}$ while keeping their good properties, it is sufficient to employ a truncation argument. Namely, if $\hat{\psi}:B_{2\lambda}(x)\to\setR$ is such that $\hat{\psi}(x)=f(x)$, $\hat{\psi}(z)<f(z)$ for any $z\in B_{2\lambda}(x)$ with $z\neq x$, $\hat{\psi}<f-\delta$ on $B_{2\lambda}(x)\setminus B_{\lambda}(x)$ and $\boldsymbol{\Delta}\hat{\psi}\ge c+\eps$ on $B_{2\lambda}(x)$, we extend $\hat{\psi}$ to $-\infty$ on $X\setminus B_{2\lambda}(x)$ and set 
\begin{equation}\label{eq:truncation}
\overline{\psi}:=\max\{f-2\delta,\hat{\psi}\}\, .
\end{equation}
An analogous construction gives the sought global extension $\overline{\chi}$ of $\hat{\chi}$.
}


\medskip

We consider the transform of $\overline{\psi}$ through the Hopf--Lax duality and introduce $\phi:X\to\setR$ by letting
\begin{equation}\label{eq:HLphi}
\phi(z):=\sup_{w\in X}\{\overline{\psi}(w)-\dist(w,z)\}\, .
\end{equation}
Analogously, we let $\eta:X\to\setR$ be defined by
\begin{equation}\label{eq:HLeta}
\eta(z):=\inf_{w\in X}\{\overline{\chi}(w)+\dist(z,w)\}\, .
\end{equation}
\medskip

Let $X_{\Sigma}$ and $Y_{\Sigma}$ be the sets of touching points of minimizing geodesics from $x$ and $y$ respectively to $\Sigma:=\partial E$, i.e.
\begin{equation}
X_{\Sigma}:=\{w\in \partial E\, :\, f(x)-f(w)=\dist(x,w)\},
\end{equation}
and
\begin{equation}
Y_{\Sigma}:=\{w\in \partial E\, :\, f(w)-f(y)=\dist(y,w)\}\, .
\end{equation}
It is easy to verify that $X_{\Sigma}$ and $Y_{\Sigma}$ are compact subsets of $\partial E$. 

{It is elementary to check that} $\phi\le f$, $\eta\ge f$ {because $\overline{\psi}\le f$ and $\overline{\chi}\ge f$ respectively}. Moreover, both $\phi$ and $\eta$ are $1$-Lipschitz functions, {because they are defined as suprema and infima of families of $1$-Lipschitz functions and they are finite at some point.}\\ 
Moreover, there exist neighbourhoods of $U_{\Sigma}\supset X_{\Sigma}$ and $V_{\Sigma}\supset Y_{\Sigma}$ such that: 
\begin{itemize}
\item[a)] $\abs{\nabla\phi}=1$ holds $\haus^N$-a.e. on $U_{\Sigma}$; 
\item[b)] $\abs{\nabla\eta}=1$ holds $\haus^N$-a.e. on $V_{\Sigma}$;
\item[c)] $\boldsymbol{\Delta}\phi \ge c+\eps$ on $U_{\Sigma}$;
\item[d)] $\boldsymbol{\Delta}\eta\le c-\eps$ on $V_{\Sigma}$;
\item[e)] $\phi(z)=f(z)$ for any $z\in X$ such that $f(x)-f(z)=\dist(x,z)$; 
\item[f)]$\eta(z)=f(z)$ for any $z\in X$ such that $f(z)-f(y)=\dist(z,y)$.
\end{itemize}
{We check the claims relative to $\phi$, the verification of the claims relative to $\eta$ being completely analogous.\\
The proof is the same as the proof of the analogous claims in Step 3 of the proof of \cite[Theorem 5.2]{MoS21}. We repeat the argument below for the sake of readability.\\
Let $x_E\in X_{\Sigma}\subset \partial E$ be any footpoint of minimizing geodesic from $x$ to $\overline{E}$. In particular, $f(x_E)=0$ and $f(x)-f(x_E)=\dist(x,x_E)$. Let $\gamma:[0,\dist(x,x_E)]\to X$ be a unit speed minimizing geodesic between $\gamma(0)=x_E$ and $\gamma(\dist(x,x_E))=x$. Observe that 
\begin{equation}\label{eq:fgammat}
f(\gamma(t))=t\, \quad \text{for any $t\in[0,\dist(x,x_E)]$}\, . 
\end{equation}
Moreover, 
\begin{equation}\label{eq:phigammat}
\phi(\gamma(t))=f(\gamma(t)), \quad \text{for any $t\in [0,\dist(x,x_E)]$},
\end{equation}
 and, for any such $t$, the supremum defining $\phi(\gamma(t))$ in \eqref{eq:HLphi} is attained only at $x$.\\
Indeed, by iiia) above, $\overline{\psi}<f-\delta$ on $X\setminus B_{\lambda}(x)$. Hence, for any $z\in X$ such that $\phi(z)>f(z)-\delta$, we can restrict the supremum defining $\phi(z)$ in \eqref{eq:HLphi} to $\overline{B_{\lambda}(x)}$. Since $\overline{B_{\lambda}(x)}$ is compact, the supremum is attained. In details, if $\phi(z)>f(z)-\delta$, then 
\begin{equation}\label{eq:phizlefz}
\phi(z)= \sup_{y\in \overline{B_{\lambda}(x)}}\{\overline{\psi}(y)-\dist(y,z)\}=\overline{\psi}(y_z)-\dist(y_z,z)\le f(y_z)-\dist(y_z,z)\le f(z)\, ,
\end{equation}
for some $y_z\in \overline{B_{\lambda}(x)}$. In particular, whenever $\phi(z)=f(z)$, all the inequalities above become equalities. Hence $\overline{\psi}(y_z)=f(y_z)$, that implies $y_z=x$ by iia) and iiia), and $f(z)-f(x)=-\dist(x,z)$. Viceversa, if $f(z)-f(x)=-\dist(x,z)$ then $\phi(z)=f(z)$ and the supremum defining $\phi(z)$ is attained (only) at $x$. In particular, these observations prove e).
\medskip

We claim that
\begin{equation}\label{eq:nablaphi1}
\abs{\nabla \phi}=1, \quad \haus^N\text{-a.e. on } \{\phi>f-\delta\}\setminus B_{\lambda}(x)\, ,
\end{equation} 
that is clearly enough to prove a).\\
In order to verify this claim, we let $z\in \{\phi>f-\delta\}\setminus B_{\lambda}(x)$. By the argument above, the supremum defining $\phi(z)$ is a maximum and it is attained at some $x_z\in \overline{B_{\lambda}(x)}$. By assumption $x_z\neq z$. Let us consider a minimizing geodesic $\gamma:[0,\dist(z,x_z)]\to X$ with unit speed connecting $z$ with $x_z$. We claim that
\begin{equation}\label{eq:slope1}
\phi(\gamma(t))=\phi(z)+t\, ,\quad\text{for any $t\in[0,\dist(z,x_z)]$}\, .
\end{equation}
The inequality $\phi(\gamma(t))\le \phi(z)+t$ follows from the fact that $\phi$ is $1$-Lipschitz. We only need to prove that $\phi(\gamma(t))\ge\phi(z)+t$. To this aim,  observe that
\begin{align*}
\phi(\gamma(t))&=\sup_{y\in X}\{\overline{\psi}(y)-\dist(y,\gamma(t))\}\\
&\ge \overline{\psi}(x_z)-\dist(\gamma(t),x_z)\\
&=\overline{\psi}(x_z)-\dist(z,x_z)+t\\
&=\phi(z)+t\, .
\end{align*}
From \eqref{eq:slope1} we infer that, for any $z\in \{\phi>f-\delta\}\setminus B_{\lambda}(x)$, the function $\phi$ has slope $1$ at $z$. The conclusion that $\abs{\nabla\phi}=1$-a.e. on $\{\phi>f-\delta\}\setminus B_{\lambda}(x)$ follows from the classical a.e. identification between slope and upper gradient obtained in \cite{Cheeger99}.

We are left to prove the Laplacian bound c). By construction, $\overline{\psi}$ verifies the Laplacian bound \eqref{eq:visccon1} on $B_{\lambda}(x)$. We already observed that for points $z\in \{\phi>f-\delta\}\setminus B_{\lambda}(x)$ the supremum defining $\phi(z)$ is a maximum attained in $\overline{B_{\lambda}(x)}$, hence we obtain by \autoref{thm:HLlapla} (more precisely, by the dual version with infima replaced by suprema) that
\begin{equation}\label{eq:Deltaphige}
\boldsymbol{\Delta}\phi\ge c+\eps\, \quad\text{on $\{\phi>f-\delta\} \setminus B_{\lambda}(x)$}\, ,
\end{equation} 
in the sense of distributions. The set $\{\phi>f-\delta\} \setminus B_{\lambda}(x)$ is easily seen to be a neighbourhood of $X_{\Sigma}$ for $\lambda$ small enough, as $\phi=f$ on $X_{\Sigma}$, hence we have proved c).

\medskip

Our next goal is to reduce to the case where $X_{\Sigma}=\{x_E\}$ and $Y_{\Sigma}=\{y_E\}$ are singletons. We discuss the reduction for $X_{\Sigma}$, the case of $Y_{\Sigma}$ being completely analogous.\\

As above, we let $x_E\in X_{\Sigma}\subset \partial E$ be any footpoint of minimizing geodesic from $x$ to $\overline{E}$. In particular, $f(x_E)=0$ and $f(x)-f(x_E)=\dist(x,x_E)$. Let $\gamma:[0,\dist(x,x_E)]\to X$ be a unit speed minimizing geodesic between $\gamma(0)=x_E$ and $\gamma(\dist(x,x_E))=x$.\\
By the non-branching property for minimizing geodesics in $\RCD(K,N)$ spaces, see \cite[Theorem 1.3]{DengNonBranching}, for any $t\in [0,\dist(x,x_E))$ the minimizing geodesic from $\gamma(t)$ to $E$ is unique, and it coincides with the restriction of $\gamma$ to the interval $[0,t]$. In particular, the footpoint of the minimizing geodesic from $\gamma(t)$ to $E$ is unique and coincides with $x_E$.\\
Moreover, we can substitute any such point $\gamma(t)$ for $0<t<\dist(x,x_E)-\lambda$ to $x$ in the contradiction argument. Indeed, the function $\phi$ satisfies the following properties:
\begin{itemize}
    \item[i)] it is Lipschitz and it belongs locally to the domain of the measure-valued Laplacian. The second statement has been already verified in $\{\phi>f-\delta\} \setminus B_{\lambda}(x)$ and it holds globally by the very definition of $\phi$ and the Laplacian comparison theorem;
    \item[ii)] $\phi\le f$ and $\phi(\gamma(t))=f(\gamma(t))$;
    \item[iii)] $\boldsymbol{\Delta}\phi\ge c+\eps$ in a neighbourhood of $\gamma(t)$. 
\end{itemize}
With a perturbation and a truncation argument completely analogous to the one employed before \eqref{eq:truncation}, we can modify $\phi$ and assume that the inequality in ii) is strict away from $\gamma(t)$ and uniformly strict away from a small ball centred at $\gamma(t)$.

The effect of this reduction is that we can assume that the original points $x$, $y$ in the contradiction argument are as close as we wish to $\partial E$. Moreover they have unique minimizing geodesics to $\partial E$, hence in particular unique footpoints on $\partial E$, that we shall denote by $x_E$ and $y_E$ respectively.\\
We will not rename the points $x$, $y$ nor the auxiliary functions $\phi$ and $\eta$, for the ease of notation.

}

\medskip


We need to consider two cases. {The case $x_E=y_E$ and the case $x_E\neq y_E$.}
\medskip

\textbf{Case 1: {$x_E=y_E$.}}

By construction, it holds $\eta\ge f\ge\phi$. {Moreover, $\eta(x_E)=f(x_E)=\phi(x_E)$, by e), f).\\
Let us set $g:=\eta-\phi$. Observe that $g\ge 0$ and $g(x_E)=0$. Moreover, by c) and d), there exists a neighbourhood of $x_E$ where}
\begin{equation}
\boldsymbol{\Delta} g\le (c-\eps)+(-c-\eps)\le -2\eps\, .
\end{equation} 
We get a contradiction, since $g$ would be a non-constant superharmonic function attaining its minimum at an interior point, see \cite[Theorem 2.8]{GigliRigoni}.

\medskip

\textbf{Case 2: {$x_E\neq y_E$}.}

{

Let us start by proving that for small values of $s\in(-\delta,0)$, we can cut $E$ along a level set of $\phi$ to obtain inner perturbations $E_{s,0}\subset E$, compactly supported on suitable balls of arbitrary small radius. The argument is analogous to Step 4 in the proof of \cite[Theorem 5.2]{MoS21} and we repeat it here for the sake of readability.

Let us define
\begin{equation*}
E_{s,0}:=E\setminus \{\phi > s\}\, .
\end{equation*}
Observe that for $s=0$ it holds $\{\phi>0\}\cap E=\emptyset$, since $\{\phi>0\}\subset \{f>0\}\subset X\setminus E$ by construction. When we decrease the value of $s$, the super-level set $ \{\phi >s\}$ starts cutting $E$.\\ 
Recall that $x_E\in \partial E$ is the footpoint of the minimizing geodesic from $x$ to $\overline{E}$. We claim that for any $s<0$ sufficiently close to $0$, $E_{s,0}$ is a perturbation of $E$ supported in a small ball $B_{r}(x_E)$, i.e. $\{\phi>s\}\cap E\subset B_r(x_E)$. To prove this claim, it is enough to observe that  from $f\le 0$ on $E$, $\phi(x_E)=0$,  and $B_{\lambda}(x)\subset X\setminus \bar{E}$,  we get
\begin{equation}\label{eq:phi>tinclusion}
\{\phi>s\}\cap E \subset \{\phi>f-\delta\}\setminus \overline{B_{\lambda}(x)}  \quad \text{for any } s\in (-\delta,0)\, .
\end{equation}
Moreover, for every $z\in \{\phi>s\}\cap E$,  the maximum defining $\phi(z)$ is attained inside $\overline{B_{\lambda}(x)}$, see \eqref{eq:phizlefz} and the nearby discussion.\\ 
Now we wish to bound the distance from $x_E$ to any point in $\{\phi>s\}\cap E$. For any $z\in\{\phi>s\}\cap E$, there exists $x_z\in \overline{B_{\lambda}(x)}$ such that
\begin{equation*}
\phi(z)=\overline{\psi}(x_z)-\dist(x_z,z)\le f(x_z)-\dist(x_z,z)\le \lambda+\dist(x,\overline{E})-\dist(x_z,z)\, .
\end{equation*}
Hence 
\begin{equation}
\dist(x_z,z)\le \dist(x,\overline{E})+\lambda-\phi(z)\le \dist(x,\overline{E})+\lambda-s\, .
\end{equation}
In particular, we can bound the distance of any point in $\{\phi>s\}\cap E$ from $x$, and hence from $x_E$, and obtain
\begin{equation}\label{eq:distEtxE}
\{\phi>s\}\cap E \subset B_{r}(x_E), \quad r:= 2 \dist(x, \overline{E})+2\lambda+\delta\, .
\end{equation}
The effect of this construction is that for $x$ close enough to $\overline{E}$, and $\lambda, \delta>0$ sufficiently small, $r:= 2 \dist(x, \overline{E})+2\lambda+\delta$ is arbitrarily small. It follows that, for every $r>0$, one can perform the above construction in order to obtain $x_E\in \partial E$  and a family of  inner perturbations $(E_{s,0})_{s\in (-\delta, 0)}$ of $E$,  so that $E\setminus E_{s,0}\subset B_{r}(x_E)$.\\
A completely analogous verification shows that, for $0<t<\delta$, the set $E_{0,t}:=E\cup \{\eta\leq t\}$ is a perturbation of $E$ compactly supported in a small ball $B_r(y_E)$.\\
When $r<\dist(x_E,y_E)/2$, the interior and the exterior perturbations have disjoint supports. It is also elementary to check that the two perturbations are non-trivial.
\medskip
}


Let us set now, for any $-\delta\le s\le  0<t<\delta$,
\begin{equation}
E_{s,t}:=E\setminus \{\phi\ge s\}\cup\{\eta\le t\}\, .
\end{equation}
We claim that there exist values $s,t$ in the range above such that 
\begin{equation}\label{eq:volpres}
\haus^N(E_{s,t})=\haus^N(E)
\end{equation}
and 
\begin{equation}\label{eq:perturb}
\haus^N(E_{s,0})<\haus^N(E)<\haus^N(E_{0,t})\, .
\end{equation}
In order to establish the claim it is sufficient to prove that 
\begin{equation}
(s,t)\mapsto \haus^N(E_{s,t}),
\end{equation}
is a continuous function. {Indeed, \eqref{eq:perturb} follows immediately from the non-triviality of the perturbations.} The sought continuity is a direct consequence of the $1$-Lipschitz regularity of $\phi$ and $\eta$, together with the compactness of the perturbations $E_{s,t}\Delta E$ and the properties $\abs{\nabla\phi}=\abs{\nabla\eta}=1$-a.e., which guarantee that 
\begin{equation}
\haus^N\left(\{\phi=s\}\cap U_{\Sigma}\right)=\haus^N\left(\{\eta=t\}\cap V_{\Sigma}\right)=0\, ,
\end{equation}
for any $-\delta\le s\le 0<t<\delta$.

Given the claim, it is easy to find by a continuity argument $s$ and $t$ such that \eqref{eq:volpres} and \eqref{eq:perturb} hold. Moreover
letting $\Omega$ be the open neighbourhood of $\{x_E,y_E\}$ where the perturbation $E_{s,t}\Delta E$ is compactly contained, it holds, by \cite[Proposition 6.1]{BPSGaussGreen}, see \autoref{prop:6.1BPS},
\begin{equation}
\left(\nabla\phi\cdot\nu_{\{\phi<s\}}\right)_{\mathrm{int}}=\left(\nabla\phi\cdot\nu_{\{\phi<s\}}\right)_{\mathrm{ext}}=-1\, \quad \Per_{\{\phi<s\}}\text{-a.e. on $\Omega$},
\end{equation}
and 
\begin{equation}
\left(\nabla\eta\cdot\nu_{\{\eta<t\}}\right)_{\mathrm{int}}=\left(\nabla\eta\cdot\nu_{\{\eta<t\}}\right)_{\mathrm{ext}}=-1\, \quad \Per_{\{\eta<t\}}\text{-a.e. on $\Omega$}\, .
\end{equation}
Notice that in order to apply \cite[Proposition 6.1]{BPSGaussGreen} it is necessary to multiply $\phi$ and $\eta$ by regular cut-off functions compactly supported into $\Omega$. This can be easily done thanks to the existence of regular cut-off functions on $\RCD(K,N)$ spaces, see \cite[Lemma 6.7]{AmbrosioMondinoSavareJGA} and \cite{MondinoNaber,Gigli14}. In the rest of the proof we will assume that $\phi$ and $\eta$ have compact supports, without changing the notation.

\medskip

We are going to reach a contradiction comparing the perimeter of $E_{s,t}$ with that of $E$, arguing as in the final part of the proof of \cite[Theorem 5.2]{MoS21}. We estimate separately the differences in the perimeter coming from the perturbation induced by $\phi$ and $\eta$ by disjointedness:
\begin{equation}\label{eq:sepcontributions}
\Per(E_{s,t})-\Per(E)= \left(\Per(E_{(s,0)},\Omega)-\Per(E,\Omega)\right)+\left(\Per(E_{(0,t)},\Omega)-\Per(E,\Omega)\right)\, .
\end{equation}

The two contributions can be estimated as follows. Let us set $F:=E\cap \{\phi>s\}$ and $G:=\{\eta<t\}\setminus E$. Observe that by \eqref{eq:volpres} it holds $\haus^N(F)=\haus^N(G)$.

On the one hand we can apply \cite[Theorem 5.2]{BPSGaussGreen}, see \autoref{thm:GaussGreen}, with the sharp trace estimates, with test function identically equal to $1$, vector field $V=\nabla \phi$ and set of finite perimeter $F$. We obtain
\begin{align}
\nonumber \int_{F^{(1)}}\boldsymbol{\Delta}\phi&=-\int_{\mathcal{F}F}\left(\nabla\phi\cdot\nu_F\right)_{\mathrm{int}}\di\Per\\
\nonumber &\leq-\int_{E^{(1)}\cap\mathcal{F}\{\phi>s\}}(\nabla \phi\cdot \nu_{\{\phi>s\}})_{\mathrm{int}}\di\Per-\int_{\mathcal{F}E\cap \{\phi>s\}^{(1)}}\left(\nabla\phi\cdot\nu_E\right)_{\mathrm{int}}\di\Per\\
\nonumber &=-\haus^{N-1}\left(E^{(1)}\cap\mathcal{F}\{\phi>s\}\right)-\int_{\mathcal{F}E\cap \{\phi>s\}^{(1)}}\left(\nabla\phi\cdot\nu_E\right)_{\mathrm{int}}\di\Per\\
&\le -\haus^{N-1}\left(E^{(1)}\cap\mathcal{F}\{\phi>s\}\right)+\haus^{N-1}\left(\mathcal{F}E\cap \{\phi>s\}^{(1)}\right)\, ,\label{eq:boundgap1}
\end{align}
where the first equality follows from \cite[Theorem 5.2]{BPSGaussGreen}, the first inequality follows from \cite[Proposition 5.4]{BPSGaussGreen}, see \autoref{prop:5.4BPS}, and the fact that $$\mathcal{F}(E\cap\{\phi>s\})\sim \left(E^{(1)}\cap\mathcal{F}(\phi>s)\right)\sqcup \left(\mathcal{F}E\cap\{\phi>s\}^{(1)}\right)\sqcup\left(E^{(1/2)}\cap F^{(1/2)}\right),$$ as a consequence of \cite[Proposition 4.2]{BPSGaussGreen}, while the last inequality follows from the sharp trace bound $\abs{\left(\nabla\phi\cdot\nu_E\right)_{\mathrm{int}}}\le 1$ in \cite[Theorem 5.2]{BPSGaussGreen}, see \autoref{thm:GaussGreen}.

The analogous computation with $\nabla\eta$ in place of $\nabla \phi$ and $G$ in place of $F$ yields to
\begin{equation}\label{eq:boundgap2}
\int_{G^{(1)}}\boldsymbol{\Delta}\eta\ge \haus^{N-1}\left(E^{(0)}\cap \mathcal{F}\{\eta<t\}\right)-\haus^{N-1}\left(\mathcal{F}E\cap \{\eta<t\}^{(1)}\right)\, .
\end{equation}
\medskip

Now, the bounds on $\boldsymbol{\Delta}\phi$ and $\boldsymbol{\Delta}\eta$ imply 
\begin{equation}\label{eq:boundlap1}
\int_{F^{(1)}}\boldsymbol{\Delta}\phi\ge (c+\eps)\haus^N(F)\,
\end{equation}
and
\begin{equation}\label{eq:boundlap2}
\int_{G^{(1)}}\boldsymbol{\Delta}\eta\le (c-\eps)\haus^N(G)\, .
\end{equation}
Hence, by \eqref{eq:sepcontributions}, \eqref{eq:boundgap1}, \eqref{eq:boundgap2}, \eqref{eq:boundlap1} and \eqref{eq:boundlap2}
\begin{align*}
\Per(E_{s,t})-\Per(E)=& \left(\Per(E_{(s,0)},\Omega)-\Per(E,\Omega)\right)+\left(\Per(E_{(0,t)},\Omega)-\Per(E,\Omega)\right)\\
= &\left(\haus^{N-1}\left(E^{(1)}\cap\mathcal{F}\{\phi>s\}\right)- \haus^{N-1}\left(\mathcal{F}E\cap \{\phi>s\}^{(1)}\right)\right)\\
&+  \left(\haus^{N-1}\left(E^{(0)}\cap \mathcal{F}\{\eta<t\}\right)-\haus^{N-1}\left(\mathcal{F}E\cap \{\eta<t\}^{(1)}\right)\right)\\
\le &\int_{G^{(1)}}\boldsymbol{\Delta}\eta-\int_{F^{(1)}}\boldsymbol{\Delta}\phi\\
\le& (c-\eps)\haus^N(G)-(c+\eps)\haus^N(F)=-2\eps\haus^N(F)<0\, ,
\end{align*}
yielding to the sought contradiction.
\medskip

{
In order to cover the case of a general lower Ricci curvature bound $K\in\setR$ we can modify the argument above as follows.\\
In the contradiction argument at the very beginning of the proof we obtain functions $\bar{\psi}$ and $\bar{\chi}$ satisfying the same properties ia)-iiia) and ib)-iiib). The bounds \eqref{eq:visccon1} and \eqref{eq:visccon2} are now replaced by the bounds
\begin{equation}
\boldsymbol{\Delta}\overline{\psi}\ge c+\eps -Kf \quad \text{on $B_{\lambda}(x)$},
\end{equation}
and 
\begin{equation}
\boldsymbol{\Delta}\overline{\chi}\le c-\eps -Kf\, ,\quad \text{on $B_{\lambda}(y)$}\, ,
\end{equation}
respectively.\\
Arguing in the very same way as we did above, by employing \autoref{thm:HLlapla} and taking into account the correction terms due to the general lower Ricci curvature bound $K$, the functions $\phi$ and $\eta$ satisfy the Laplacian bounds $\boldsymbol{\Delta}\phi\ge c+\eps/2$ and $\boldsymbol{\Delta}\eta\le c-\eps/2$ in a neighbourhood of $X_{\Sigma}$ and $Y_{\Sigma}$ respectively. The rest of the argument carries over as in the $K=0$ case.
}

\medskip

\textbf{Step 2.}
Let us see how to pass from the adimensional estimates in \eqref{eq:adimensional laplacian comparison} to the sharp Laplacian comparison in \eqref{eq:sharplaplacian}.

We will rely on the localization technique from \cite{CM17,CM20}, following the proof of \cite[Theorem 5.2]{MoS21}. Let us prove the sharp Laplacian comparison on $X\setminus E$, the bound in the interior follows from an analogous argument.\\
From  \cite[Corollary 4.16]{CM20}, we know that 
\[\boldsymbol{\Delta} f \res X\setminus \overline{E}= (\boldsymbol{\Delta} f)^{\text{reg}}\res X\setminus \overline{E}+ (\boldsymbol{\Delta} f)^{\text{sing}}\res X\setminus \overline{E}\, ,\]
 where the singular part $(\boldsymbol{\Delta} f)^{\text{sing}}\perp \haus^{N}$ satisfies $(\boldsymbol{\Delta} f)^{\text{sing}} \res X\setminus \overline{E}\leq 0$ and the regular part $(\boldsymbol{\Delta} f)^{\text{reg}} \ll \haus^{N}$ admits the representation formula
\begin{equation}\label{eq:repDeltafreg}
(\boldsymbol{\Delta} f)^{\text{reg}} \res X\setminus \overline{E} = (\log h_{\alpha})' \haus^{N} \res X\setminus \overline{E}\,.
\end{equation}
In \eqref{eq:repDeltafreg}, $Q$ is a suitable set of indices,  $(h_{\alpha})_{\alpha\in Q}$  are suitable densities defined on geodesics $(X_{\alpha})_{\alpha \in Q}$, which are essentially partitioning $X\setminus \overline{E}$ (in the smooth setting, $(X_{\alpha})_{\alpha\in Q}$ correspond to the integral curves of $\nabla \dist_{E}$; note that here we are using the reverse parameterization of $X_{\alpha}$ with respect to \cite{CM20}, hence the reversed sign in the right hand side of \eqref{eq:repDeltafreg}), such that the following disintegration formula holds: 
 \begin{equation}\label{eq:disintegration}
 \haus^{N}\res X\setminus \overline{E}=\int_{Q} h_{\alpha} \haus^{1} \res X_{\alpha}\,  {\mathfrak{q}}(\di \alpha)\,.
 \end{equation}
 The non-negative measure  $\mathfrak{q}$ in \eqref{eq:disintegration}, defined on the set of indices $Q$, is obtained in a natural way from the essential partition $(X_{\alpha})_{\alpha \in Q}$ of $X\setminus \overline{E}$, roughly by projecting $\haus^{N}\res X\setminus \overline{E}$  on the set $Q$ of equivalence classes (we refer to \cite{CM20} for the details).
 \\The key point for the proof of Step 2 is that each $h_{\alpha}$ is a $\CD(K,N)$ density over the ray $X_{\alpha}$ (see \cite[Theorem 3.6]{CM20}), i.e.,
\begin{equation}\label{eq:DiffEqCDKN}
(\log h_{\alpha})''\leq - K - \frac{1}{N-1} \big( (\log h_{\alpha})' \big)^{2},
\end{equation}
in the sense of distributions and point-wise except countably many points, compare with \cite[Lemma A.3, Lemma A.5, Proposition A.10]{CavallettiMilmanCD}. 
Equivalently
\begin{equation}\label{eq:halphaCD}
\left(h_{\alpha}^{\frac{1}{N-1}}\right)''+\frac{K}{N-1}h_{\alpha}^{\frac{1}{N-1}}\le 0\, ,
\end{equation}
in the sense of distributions.
Moreover, from \eqref{eq:repDeltafreg} and \eqref{eq:adimensional laplacian comparison} we know that 
\begin{equation}\label{eq:halpha'Kdist}
(\log h_{\alpha})'(\dist_{\overline{E}}) \leq c- K \, \dist_{\overline{E}} \, \text{ a.e. on $X_{\alpha}$, for $\mathfrak{q}$-a.e. $\alpha\in Q$}\, .
\end{equation}

The sharp estimates in \eqref{eq:sharplaplacian} then follow from the standard Riccati comparison (see for instance \cite[Lemma 4.10]{Burtscheretal20}) applied to the functions $v(r):=\left(h_{\alpha}(r)/h_{\alpha}(0)\right)^{\frac{1}{N-1}}$ which verify
\begin{equation}
v''+\frac{K}{N-1}v\le 0\, ,    
\end{equation}
in the sense of distributions by \eqref{eq:halphaCD}, $v(0)=1$ and $v'(0)\le c/(N-1)$ by \eqref{eq:halpha'Kdist}. 
\end{proof}



\begin{definition}[Mean curvature barriers for isoperimetric sets]
Let $(X,\dist,\haus^N)$ be an $\RCD(K,N)$ space, let $E\subset X$ be an isoperimetric set. We call any constant $c$ such that \eqref{eq:sharplaplacian} holds a \emph{mean curvature barrier} for $\partial E$. 
\end{definition}
For discussions concerning the uniqueness of $c$ as in the previous definition, and comparison with the Riemannian setting, we refer the reader to the forthcoming remarks.

\begin{remark}\label{rem:UniquenessC}
If $(M^n,g)$ is a smooth Riemannian manifold (with Ricci curvature uniformly bounded from below) and $E\subset M$ is an isoperimetric set, then the constant $c$ obtained via \autoref{thm:Isoperimetrici} is unique and equal to the constant mean curvature of the regular part of $\partial E$.\\ 
The validity of a similar statement for isoperimetric sets in $\RCD(K,N)$ metric measure spaces $(X,\dist,\haus^N)$ goes beyond the scope of this note and is left to the future investigation.
\end{remark}

\begin{remark}
Let $(D,\dist,\haus^2)$ be a two dimensional flat disk with canonical metric measure structure and boundary $\partial D$. Let $(K,\dist_K,\haus^2)$ be the doubling of $D$ along its boundary $\partial D$ and set $k>0$ the curvature of $\partial D$. It is a classical fact that $(K,\dist_K,\haus^2)$ is an Alexandrov space with non-negative curvature, in particular it is an $\RCD(0,2)$ space, but it is not a smooth Riemannian manifold. Observe that each of the two isometric copies of $D$ inside $K$ verifies the bound \eqref{eq:sharplaplacian} for any $c\in[-k,k]$ (for $K=0$ and $N=2$). { Even though $D\subset K$ is not an isoperimetric set, cf.\ with \cite[Theorem 5.4]{CFGNSY05},} this example illustrates that the uniqueness of the mean curvature barrier is a delicate issue in the non-smooth setting.
\end{remark}

The following mild regularity results are obtained arguing verbatim as in \cite[Proposition 5.4, Theorem 5.5, Proposition 6.14]{MoS21}. This can be done since \autoref{thm:Isoperimetrici} is the counterpart  of \cite[Theorem 5.2]{MoS21} for isoperimetric sets, while \cite[Lemma 6.12]{MoS21} holds for volume constrained minimizers for compact variations as well, and \cite[Lemma 2.42]{MoS21} holds for volume constrained minimizers for compact variations since they are quasiminimal sets according to \cite[Theorem 3.24]{AntonelliPasqualettoPozzetta21}.

\begin{proposition}\label{prop:laplfull}
Let $(X,\dist,\haus^N)$ be an $\RCD(K,N)$ space for some $K\in\setR$ and $N\ge 2$. Let $E\subset X$ be a volume constrained minimizer for compact variations in $X$. {Let $\dist_{\overline E}$ be the distance function from the set $\overline E$, and let $\dist^s_E$ be the signed distance function from $E$, with the convention that it is positive outside $E$ and negative inside $E$.}

Then $\dist_{\overline E}$ and $\dist_E^s$ have locally measure valued Laplacian in $X$ and the following hold
\begin{equation}
    \begin{split}
        \boldsymbol{\Delta} \dist_{\overline E} &= \haus^{N-1}\res\partial E+\boldsymbol{\Delta} \dist_{\overline E}\res (X\setminus\overline E), \\
        \boldsymbol{\Delta} \dist_E^s\res\partial E&=0.
    \end{split}
\end{equation}
\end{proposition}

\begin{proof}
We only provide an indication of the strategy of the proof, that can be obtained with minor modifications with respect to the case of local perimeter minimizers considered in \cite{MoS21}.

The proof is divided into two steps: the verification that $\dist_{\overline E}$ and $\dist^s_E$ admit locally measure valued Laplacian and the computation of the singular part of their Laplacian along $\partial E$. 
\medskip

In order to prove that $\dist_{\overline E}$ and $\dist^s_E$ admit locally measure valued Laplacian it is sufficient to uniformly bound the volumes of the $t$-tubular neighbourhoods of $\partial E$ as $Ct$ when $t\to 0$ for some constant $C>0$ and to pass to the limit in the Gauss-Green integration by parts formulae on super-level sets $\{\dist_{\overline E}>t_i\}$ for suitably chosen sequences $t_i\downarrow 0$.\\
The uniform volume bound for the tubular neighbourhoods of $\partial E$ follows from \cite[Theorem 3.24]{AntonelliPasqualettoPozzetta21}, where quasiminimality of isoperimetric sets is proved, and \cite[Lemma 2.42]{MoS21}. The conclusion follows arguing as in Step 1 of the proof of \cite[Proposition 5.4]{MoS21} (see also the previous \cite{BrueNaberSemola20}).
\medskip

Thanks to {the arguments in the proof of \cite[Theorem 7.4]{BrueNaberSemola20}}, $\boldsymbol{\Delta}\dist_{\overline E}\ll \haus^{N-1}$ and $\boldsymbol{\Delta} \dist^s_{E}\ll \haus^{N-1}$. To conclude, it suffices to compute the densities of these measures with respect to $\haus^{N-1}\res\partial E$, which is a locally doubling finite measure. This can be done with a classical blow-up argument, as in Step 2 of the proof of \cite[Proposition 5.4]{MoS21}. In order to prove that at regular points of $\partial E$ the (signed) distance from the boundary converges to the (signed) distance from the boundary of a Euclidean space after the blow-up we rely on the quasiminimality \cite[Theorem 3.24]{AntonelliPasqualettoPozzetta21} and on \cite[Theorem 2.43]{MoS21}.
\medskip

Finally, in order to prove that $\boldsymbol{\Delta} \dist_E^s\res\partial E=0$ one argues precisely as in the last part of the proof of \cite[Theorem 5.5]{MoS21}.
\end{proof}

\begin{proposition}
Let $(X,\dist,\haus^N)$ be an $\RCD(K,N)$ space for some $K\in\setR$ and $N\ge 2$.
Let $E\subset X$ be a volume constrained minimizer for compact variations in $X$. Let 
\begin{equation}
    \begin{split}
        \mu_\varepsilon^+&:=\varepsilon^{-1}\haus^N\res\{0\leq\dist_{\overline E}<\varepsilon\},\\
        \mu_\varepsilon^-&:=\varepsilon^{-1}\haus^N\res\{0\leq\dist_{E^c}<\varepsilon\}.
    \end{split}
\end{equation}
Then $\mu_\varepsilon^+\to \Per_E$, and $\mu_\varepsilon^-\to\Per_E$ weakly as $\varepsilon\to 0$. In particular, the Minkowski content of $E$ coincides with $\Per(E)$.
\end{proposition}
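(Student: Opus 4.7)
\medskip

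\noindent\textbf{Plan.} The strategy, for each of $\mu_\varepsilon^\pm$, is to rewrite the measure via the coarea formula as a Ces\`aro mean of perimeter measures of sublevel sets of a distance function, and then to show that these perimeter measures converge weakly to $\Per(E,\cdot)$ as the level tends to $0^+$. The Minkowski content identity then follows by testing against a cut-off of the constant function $1$ near $E$, which is bounded by \autoref{thm:RegularityIsoperimetricSets}. We carry out the argument for $\mu_\varepsilon^+$; the case of $\mu_\varepsilon^-$ is entirely symmetric, upon replacing $\dist_{\overline E}$ by $\dist_{E^c}$ (whose measure-valued Laplacian admits the same singular-plus-absolutely-continuous decomposition along $\partial E$, with singular part $\Per(E,\cdot)$) and invoking the interior Laplacian bound from \eqref{eq:sharplaplacian} in place of the exterior one.

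\smallskip

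\noindent\textbf{Coarea reduction.} Since $\dist_{\overline E}$ is $1$-Lipschitz with $|\nabla \dist_{\overline E}| = 1$ $\haus^N$-a.e.\ on $X\setminus\overline E$, \autoref{thm:coarea} yields, for every $\varphi \in {\rm LIP}_{bs}(X)$,
\[
\int_X \varphi \, d\mu_\varepsilon^+ \;=\; \frac{1}{\varepsilon} \int_0^\varepsilon \!\!\int_X \varphi \, d\Per(\{\dist_{\overline E}<t\},\cdot)\,dt.
\]
Hence, to prove $\mu_\varepsilon^+ \weakto \Per(E,\cdot)$ it suffices to establish that for every $\varphi \in {\rm LIP}_{bs}(X)$
\[
\lim_{t\to 0^+} \int_X \varphi \, d\Per(\{\dist_{\overline E}<t\},\cdot) \;=\; \int_X \varphi \, d\Per(E,\cdot),
\]
since the Ces\`aro average above will then converge to the same limit.

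\smallskip

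\noindent\textbf{Gauss--Green with Laplacian control.} To prove the preceding limit, combine \autoref{prop:laplfull} (which yields $\boldsymbol{\Delta}\dist_{\overline E} = \Per(E,\cdot) + \Delta\dist_{\overline E}\,\haus^N\llcorner(X\setminus\overline E)$) with \autoref{thm:Isoperimetrici} (which, for the mean curvature barrier $c$ of $\partial E$, bounds the density $\Delta\dist_{\overline E}$ from above by the continuous function $r\mapsto (N-1)\,s'_{K/(N-1),-c/(N-1)}(r)/s_{K/(N-1),-c/(N-1)}(r)$ evaluated at $\dist_{\overline E}$, hence uniformly bounded on $\{0<\dist_{\overline E}<\delta\}$ for some $\delta>0$). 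Applying the Gauss--Green formula \autoref{thm:GaussGreen} to $\nabla\dist_{\overline E}$ (multiplied by a Lipschitz cut-off so that it becomes an element of $\mathcal{DM}^\infty(X)$) and to the set $\{\dist_{\overline E}<t\}$, and identifying the interior normal trace of $\nabla\dist_{\overline E}$ on the level set $\{\dist_{\overline E}=t\}$ as $-1$ via \autoref{prop:6.1BPS}, one arrives at the integration-by-parts identity
\[
\int_X \varphi\,d\Per(\{\dist_{\overline E}<t\},\cdot) - \int_X \varphi\,d\Per(E,\cdot) \;=\; \int_{\{0<\dist_{\overline E}<t\}} \bigl[\varphi\,\Delta\dist_{\overline E} + \nabla\varphi\cdot\nabla\dist_{\overline E}\bigr]\,d\haus^N.
\]
The integrand is bounded on $\{0<\dist_{\overline E}<\delta\}\cap\supp\varphi$, while $\haus^N(\{0<\dist_{\overline E}<t\}\cap\supp\varphi) = O(t)$ as $t\to 0^+$ (a tube bound already exploited in the proof of \autoref{prop:laplfull}); thus the right-hand side vanishes in the limit, establishing the claim and closing the argument via the coarea representation.

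\smallskip

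\noindent\textbf{Main obstacle.} The central technicality is the implementation of the Gauss--Green formula in this low-regularity framework, in particular the correct identification of the normal traces of $\nabla\dist_{\overline E}$ on both the original boundary $\partial E$ (recovered as $\Per(E,\cdot)$ through \autoref{prop:laplfull}) and on the approximating level sets $\{\dist_{\overline E}=t\}$. \autoref{prop:5.4BPS} and \autoref{prop:6.1BPS} are precisely the tools that make this rigorous.
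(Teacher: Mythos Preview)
Your approach is essentially the one indicated by the paper, which refers to \cite[Proposition~6.14]{MoS21} and rests on the same three ingredients: \autoref{thm:Isoperimetrici} for the Laplacian upper bound on $X\setminus\overline E$, \autoref{prop:laplfull} for the structure $\boldsymbol{\Delta}\dist_{\overline E}=\Per(E,\cdot)+\boldsymbol{\Delta}\dist_{\overline E}\llcorner(X\setminus\overline E)$, and the Gauss--Green formula to compare $\Per(\{\dist_{\overline E}<t\},\cdot)$ with $\Per(E,\cdot)$. Two technical points deserve tightening.

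First, the phrase ``the integrand is bounded'' is not accurate for the Laplacian term. On $X\setminus\overline E$ the measure $\boldsymbol{\Delta}\dist_{\overline E}$ has, besides the absolutely continuous part controlled from above by \autoref{thm:Isoperimetrici}, a singular nonpositive part (cut locus); so there is no two-sided pointwise bound. Your integration-by-parts identity is nevertheless correct once the term is read as $\int_{\{0<\dist_{\overline E}<t\}}\varphi\,d\boldsymbol{\Delta}\dist_{\overline E}$, and it still tends to zero: $\boldsymbol{\Delta}\dist_{\overline E}$ is a locally finite signed measure by \autoref{prop:laplfull}, and the sets $\{0<\dist_{\overline E}<t\}\cap\supp\varphi$ decrease to $\emptyset$, so continuity from above does the job. (Alternatively, for $\varphi\ge 0$ the upper Laplacian bound alone gives the inequality $\le$, and one completes with lower semicontinuity of the perimeter for the reverse direction.)

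Second, \autoref{prop:6.1BPS} is stated for functions with a \emph{lower} Laplacian bound, while \autoref{thm:Isoperimetrici} provides only an \emph{upper} bound for $\boldsymbol{\Delta}\dist_{\overline E}$ on $X\setminus\overline E$. Apply \autoref{prop:6.1BPS} to $-\dist_{\overline E}$ instead (whose Laplacian is then bounded below) and translate the trace conclusion back to the level sets $\{\dist_{\overline E}<t\}$ by a routine sign chase.
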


\begin{proof}

The proof is analogous to the one of \cite[Proposition 6.14]{MoS21}.

The fact that the measures $\mu_{\eps}^+$ and $\mu_{\eps}^-$ have uniformly bounded mass follows from \cite[Lemma 2.42]{MoS21} thanks to \cite[Theorem 3.24]{AntonelliPasqualettoPozzetta21}.

The fact that the perimeter is smaller than any weak limit as $\eps_i\downarrow 0$ of the measures $\mu_{\eps_i}^+$ and $\mu_{\eps_i}^-$ is general and does not require any regularity of $\partial E$. 

In order to prove the converse inequality we rely on \autoref{thm:Isoperimetrici}, which plays the role of \cite[Theorem 5.2]{MoS21} in this setting, on \autoref{prop:laplfull}, which plays the role of \cite[Proposition 5.4]{MoS21} in this setting and argue as in the second part of the proof of \cite[Proposition 6.14]{MoS21}. 
\end{proof}

Our next goal is to turn the Laplacian comparison in \autoref{thm:Isoperimetrici} into an estimate for the perimeter of the equidistant sets from the boundary of an isoperimetric set.

\begin{proposition}\label{prop:variationofarea}
Let us consider an $\RCD(K,N)$ metric measure space $(X,\dist,\haus^N)$ for some $K\in\setR$ and $N\ge 2$.
Let $E\subset X$ be a volume constrained minimizer for compact variations in $X$, and let $c\in\setR$ be given by \autoref{thm:Isoperimetrici}. Then for any $t\ge 0$ it holds
\begin{equation}\label{eq:extareabd}
\Per(\{x\in X\, :\, \dist(x,\overline{E})\le t\})\le J_{c,K,N}(t) \Per(E)\, ,
\end{equation}
and, for any $t\ge 0$,
\begin{equation}\label{eq:intareabound}
\Per(\{x\in X\, :\, \dist(x,X\setminus E)\le t\})\le J_{-c,K,N}(t)\Per(E)\,,
\end{equation}
where we recall that the Jacobian function has been introduced in \eqref{eq:Jacobian}.
\end{proposition}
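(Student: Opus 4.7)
The plan is to convert the pointwise Laplacian bound of \autoref{thm:Isoperimetrici} into a Gr\"onwall/Heintze--Karcher-type estimate for the perimeter of equidistants via the coarea formula. Set $f(x):=\dist(x,\overline E)$, $u(t):=\Per(\{f\le t\})$ for $t\ge 0$, and $g:=J_{c,K,N}$. With $\phi:=s_{K/(N-1),-c/(N-1)}$ one has $g=\phi^{N-1}$ on the positivity interval of $\phi$, so $(N-1)\phi'/\phi=g'/g$ and the upper bound in \eqref{eq:sharplaplacian} reads $\mathbf\Delta f\le (g'/g)(f)\,\haus^N$ in the sense of distributions on $X\setminus\overline E$.

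First I would test this Laplacian bound against $\psi=\chi\,\eta(f)$ with $\eta\in C_c^\infty((0,\infty))$, $\eta\ge 0$, and $\chi\in\Lip_c(X)$ a cutoff. Using the identity defining $\mathbf\Delta f$, the $\haus^N$-a.e.\ identity $|\nabla f|=1$ on $X\setminus\overline E$ and the coarea formula applied to $f$, and then letting $\chi$ exhaust larger and larger bounded sets (harmless on the positivity range of $g$: when $E$ is bounded, as in the isoperimetric case via \autoref{thm:RegularityIsoperimetricSets}, $\{f\le T\}$ is itself bounded; in general one localises via $L^1_{\rm loc}$ passage to the limit), one obtains
\[
\int_0^\infty \eta'(t)u(t)\di t + \int_0^\infty \eta(t)\frac{g'(t)}{g(t)}u(t)\di t\ge 0\qquad\text{for every such }\eta.
\]
This is exactly the distributional inequality $u'\le (g'/g)u$ on $(0,\infty)$, equivalently the monotonicity of $t\mapsto u(t)/g(t)$ on the positivity interval of $g$. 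Outside that interval, the range analysis contained in the proof of \autoref{thm:Isoperimetrici} forces both sides of \eqref{eq:extareabd} to vanish.

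It remains to identify $\lim_{t\to 0^+} u(t)=\Per(E)$. Lower semicontinuity of the perimeter only yields the useless inequality $\Per(E)\le\liminf_{t\to 0^+}u(t)$; the right-limit is pinned down by the Minkowski content statement of the preceding proposition: via coarea and $|\nabla f|=1$ a.e.,
\[
\frac{1}{t}\int_0^t u(s)\di s=\frac{\haus^N(\{0<f<t\})}{t}\xrightarrow{t\to 0^+}\Per(E).
\]
The monotonicity of $u/g$ together with continuity of $g$ at $0$ with $g(0)=1$ forces the pointwise limit $L:=\lim_{t\to 0^+}u(t)\in[0,+\infty]$ to exist; since the Ces\`aro and pointwise limits must then agree, $L=\Per(E)$. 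Feeding this back into the monotonicity gives $u(t)\le g(t)\Per(E)$, i.e.\ \eqref{eq:extareabd}. The interior bound \eqref{eq:intareabound} follows by running the identical scheme for $\tilde f(x):=\dist(x,X\setminus E)$: the lower bound on $\mathbf\Delta f$ in \eqref{eq:sharplaplacian} on $E$ translates, via $\tilde f=-f$ on $E$ and $s_{K/(N-1),c/(N-1)}=s_{K/(N-1),-(-c)/(N-1)}$, into $\mathbf\Delta\tilde f\le J_{-c,K,N}'(\tilde f)/J_{-c,K,N}(\tilde f)$ on $E$, and the right-limit $\tilde u(0^+)=\Per(E)$ is identified via the measures $\mu_\eps^-$ of the preceding proposition.

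The main obstacle is precisely this identification of the right-limit of $u$ at $0$: the monotonicity alone would only give $u(t)\le g(t)\cdot L$ with an uncontrolled $L\ge\Per(E)$, and recovering the sharp constant $\Per(E)$ requires converting the volume-asymptotics of the tubular neighbourhoods of $\partial E$ into a pointwise asymptotic for the perimeter of their boundaries. This conversion is the one place where the argument genuinely leans on the preceding Minkowski content proposition (and thus, indirectly, on \autoref{thm:Isoperimetrici} and \autoref{prop:laplfull}).
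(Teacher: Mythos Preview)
Your argument is correct and reaches the same conclusion, but the route differs from the paper's. The paper applies the Gauss--Green formula (\autoref{thm:GaussGreen}) with the vector field $\nabla f$ directly on the slab $E^t\setminus E$: the trace identifications on $\partial E$ and $\partial E^t$ (via \autoref{prop:laplfull} and \autoref{prop:6.1BPS}) produce in one stroke the integral inequality
\[
\Per(E^t)\le \Per(E)+\int_{E^t\setminus E}(N-1)\frac{s'_{K/(N-1),-c/(N-1)}\circ f}{s_{K/(N-1),-c/(N-1)}\circ f}\,\di\haus^N,
\]
which after coarea becomes $u(t)\le \Per(E)+\int_0^t (g'/g)(s)u(s)\,\di s$, and then Gr\"onwall finishes. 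In particular, the initial value $\Per(E)$ appears as the boundary term of the integration by parts, so no separate identification of $\lim_{t\to 0^+}u(t)$ is needed.

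Your approach instead tests the Laplacian bound against $\eta(f)$ supported away from $\{f=0\}$, which yields only the differential form $(u/g)'\le 0$ on $(0,T)$ and therefore requires the extra step of pinning down the right-limit of $u$ at $0$. You do this cleanly via the Minkowski content proposition preceding \autoref{prop:variationofarea}, converting its Ces\`aro statement into a pointwise limit through the monotonicity you just obtained. This is a legitimate alternative: it trades the Gauss--Green machinery (normal trace identification on $\partial E$) for the Minkowski content result, which the paper proves anyway but does not invoke in its own proof of \autoref{prop:variationofarea}. The paper's path is marginally more direct; yours is more elementary in that it never computes a normal trace on $\partial E$, at the cost of a slightly longer two-step structure.
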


\begin{proof}
Let us focus on the estimate for the perimeter of the exterior equidistant set, the estimate for the interior one can be obtained with a completely analogous argument.

The bound can be obtained by applying the Gauss--Green integration by parts formula \cite[Theorem 1.6]{BPSGaussGreen} with vector field the gradient of the distance function in the slab $E^t\setminus E$ (compare with \cite[Theorem 5.2]{BPSGaussGreen}, and \autoref{thm:GaussGreen}), where we denoted $E^t:=\{x\in X\, :\, \dist(x,\overline{E})\le t\}$ the $t$-enlargement of $E$. Indeed, taking into account \autoref{thm:Isoperimetrici} we obtain
\begin{equation}\label{eq:HKprimaGronwall}
    \Per(E_t)\le \Per(E)+\int_{E^t\setminus E} (N-1)\frac{s'_{\frac{K}{N-1},-\frac{c}{N-1}}\circ \dist_{\overline{E}}}{s_{\frac{K}{N-1},-\frac{c}{N-1}}\circ \dist_{\overline{E}}}\di \haus^N\, .
\end{equation}
Arguing as in the proof of \cite[Proposition 6.15]{MoS21} via a classical comparison argument for ODEs (see also \cite[Lemma 4.10]{Burtscheretal20}) we obtain that
\begin{equation}
   \Per(\{x\in X\, :\, \dist(x,\overline{E})\le t\})\le J_{c,K,N}(t) \Per(E)\, ,\quad\text{for any $t\ge 0$}\, , 
\end{equation}
as we claimed.
\end{proof}

\begin{remark}
If $(M^n,g)$ is a smooth Riemannian manifold and $E\subset M$ is an isoperimetric set, then 
\begin{equation}
    \lim_{t\to 0^+}\frac{\Per(\{x\in X\, :\, \dist(x,\overline{E})\le t\})-\Per(E)}{t}=c\Per(E)\, 
\end{equation}
and an analogous conclusion holds for the perimeters of the interior equidistant sets. This follows directly from the first variation formula for the perimeter when $\partial E$ is smooth (since we can consider deformations induced by a smooth compactly supported extension of the unit normal of $\partial E$). If $n\ge 8$ an additional approximation argument (relying on the regularity theory for isoperimetric sets) is required, see for instance \cite{Bayle03,NiWangiso}.\\
The validity of an analogous statement for isoperimetric sets in $\RCD(K,N)$ metric measure spaces goes beyond the scope of this note and is left to the future investigation.
\end{remark}

Let us point out the expression for the bounds above when $K=0$. Under these assumptions, with the very same notation above we obtain
\begin{equation}\label{eq:boundareaK0}
\Per(E_t)\le \Per(E)\left(1+\frac{ct}{N-1}\right)^{N-1}\,,\quad\text{for any $t\ge 0$}\, .
\end{equation}

\medskip

Using the coarea formula we can get volume bounds for the tubular neighbourhoods of isoperimetric sets integrating the perimeter bounds in \autoref{prop:variationofarea}.

\begin{corollary}\label{cor:volumebounds}
Let us consider an $\RCD(K,N)$ metric measure space $(X,\dist,\haus^N)$ for some $K\in\setR$ and $N\ge 2$.
Let $E\subset X$ be a volume constrained minimizer for compact variations in $X$, and let $c\in\setR$ be given by \autoref{thm:Isoperimetrici}. Then for any $t\ge 0$ it holds
\begin{equation}\label{eq:extvolbound}
\haus^N(\{x\in X\setminus E\, :\, \dist(x,\overline{E})\le t\})\le \Per(E)\int _0^tJ_{c,K,N}(r)\di r\, ,
\end{equation}
and, for any $t\ge 0$,
\begin{equation}\label{eq:intvolbound}
\haus^N(\{x\in E\, :\, \dist(x,X\setminus E)\le t\})\le \Per(E)\int _0^tJ_{-c,K,N}(r)\di r\, .
\end{equation}

\end{corollary}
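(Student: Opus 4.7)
The plan is to apply the coarea formula (Theorem \ref{thm:coarea}) to the distance function from $\overline E$ and to integrate the sharp perimeter bound \eqref{eq:extareabd} of Proposition \ref{prop:variationofarea} along its level sets. I would focus on the exterior estimate \eqref{eq:extvolbound}, since the interior one \eqref{eq:intvolbound} follows by the identical argument with $\dist(\plchldr,X\setminus E)$ in place of $\dist(\plchldr,\overline E)$ and \eqref{eq:intareabound} in place of \eqref{eq:extareabd}.

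Set $d(x)\eqdef\dist(x,\overline E)$. The function $d$ is $1$-Lipschitz on $X$, and on an $\RCD(K,N)$ space the minimal weak upper gradient of $d$ agrees with its slope $\lip d$, which equals $1$ $\haus^N$-a.e.\ on $X\setminus\overline E$. Consequently the BV total variation $|Dd|$ coincides with $\haus^N\res\{d>0\}$. I would then fix $t>0$ and a small $\eps>0$, apply Theorem \ref{thm:coarea} to $d$ on $\Omega\eqdef\{d<t+\eps\}$, and use that $\Per(\{d>s\},\cdot)=\Per(E^s,\cdot)$ with $E^s\eqdef\{d\le s\}$, to obtain
\[
\haus^N(\{0<d<t+\eps\})=|Dd|(\Omega)=\int_0^{t+\eps}\Per(E^s,\Omega)\,\di s\le\int_0^{t+\eps}\Per(E^s)\,\di s\, .
\]
Plugging in \eqref{eq:extareabd} and sending $\eps\downarrow 0$ by monotone convergence would then give \eqref{eq:extvolbound}, after using that $\haus^N(\partial E)=0$ (by the $(N-1)$-Ahlfors regularity of $\partial E$ from Theorem \ref{thm:RegularityIsoperimetricSets}) to pass from $\{0<d\le t\}$ to $\{x\in X\setminus E:d(x)\le t\}$.

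I do not anticipate a substantial obstacle: the corollary is essentially an integration in $s$ of the perimeter bound proved in Proposition \ref{prop:variationofarea}. The only point that requires a little care is the coarea identity $|Dd|(\Omega)=\haus^N(\Omega\cap\{d>0\})$, namely the equality between the BV total variation and the slope integral for the Lipschitz function $d$ in the doubling PI space $X$, combined with $\lip d=1$ holding $\haus^N$-a.e.\ outside $\overline E$; both are well established for $\RCD(K,N)$ spaces.
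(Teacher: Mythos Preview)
Your proposal is correct and follows the same approach as the paper, which simply states before the corollary that the volume bounds follow by integrating the perimeter bounds in Proposition~\ref{prop:variationofarea} via the coarea formula. You have filled in the details (identification of $|Dd|$ with $\haus^N\res\{d>0\}$, the cutoff at level $t+\eps$, and the use of $\haus^N(\partial E)=0$) that the paper leaves implicit.
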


\section{Concavity properties of the isoperimetric profile function and consequences}

In \autoref{thm:Isoperimetrici} we proved sharp bounds on the Laplacian of the distance function from an isoperimetric set. Such bounds encode information about the first and second variation of the area of equidistants from the isoperimetric boundary. As we shall see, this information is sufficient to extend the sharp concavity properties for the isoperimetric profile known for smooth and compact Riemannian manifolds with lower Ricci curvature bounds to the setting of $N$-dimensional compact $\RCD(K,N)$ spaces.\\
More in general, we are going to join such information together with the generalized asymptotic mass decomposition \autoref{thm:MassDecompositionINTRO} to get sharp concavity properties of the isoperimetric profile for $\RCD(K,N)$ spaces $(X,\dist,\haus^N)$ with a lower bound on the volume of unit balls in \autoref{thm:BavardPansu}, which is the main result of this section. In particular, this will imply that the sharp concavity properties of the isoperimetric profile hold on complete non-compact manifolds with uniform lower Ricci curvature bounds and uniform lower volume bounds.

Concavity properties of the isoperimetric profile for (weighted) manifolds with lower Ricci curvature bounds have been considered by various authors, see for instance \cite{BavardPansu86,Bayle03,Bayle04,BayleRosales,Milmanconvexity,NiWangiso,MorganJohnson00,Gallotast}. All these works deal with compact manifolds or with weighted manifolds of finite total measure and they heavily rely on the existence of isoperimetric regions for any volume. In all cases smoothness is a relevant assumption, in order to rely on the regularity theory for isoperimetric regions. The case of non-smooth weights in \cite{Milmanconvexity} is handled with a careful approximation procedure. The only previous references where the problem is considered in non-compact manifolds with infinite volume are \cite{RitoreExistenceSurfaces01,Nar14} and \cite{LeonardiRitore}. In \cite{RitoreExistenceSurfaces01} the case of surfaces with non-negative Gaussian curvature is treated and existence of isoperimetric regions of any volume is an intermediate step in order to prove concavity of the isoperimetric profile. In \cite{Nar14} the case of complete non-compact manifolds with $C^{2,\alpha}$ bounded geometry is considered. In \cite{LeonardiRitore} the authors consider unbounded convex bodies $C\subset \setR^n$ verifying no further regularity assumptions. Their proof relies on a generalized existence result for isoperimetric regions and on an approximation argument, to deal with convex bodies with non-smooth boundary.
\medskip

The two statements below are proved in \cite{AntonelliNardulliPozzetta} building on top of \cite{Nar14, AFP21, AntonelliPasqualettoPozzetta21}. They will be key ingredients for the proof of \autoref{thm:BavardPansu}.

\begin{theorem}[Asymptotic mass decomposition]\label{thm:MassDecompositionINTRO}
Let $(X,\dist,\haus^N)$ be a non-compact $\RCD(K,N)$ space. Assume there exists $v_0>0$ such that $\haus^N(B_1(x))\geq v_0$ for every $x\in X$. Let $V>0$. For every minimizing (for the perimeter) sequence $\Omega_i\subset X$ of volume $V$, with $\Omega_i$ bounded for any $i$, up to passing to a subsequence, there exist an increasing and bounded sequence $\{N_i\}_{i\in\mathbb N}\subseteq \mathbb N$, disjoint finite perimeter sets $\Omega_i^c, \Omega_{i,j}^d \subset \Omega_i$, and points $p_{i,j}$, with $1\leq j\leq N_i$ for any $i$, such that
\begin{itemize}
    \item $\lim_{i} \dist(p_{i,j},p_{i,\ell}) = \lim_{i} \dist(p_{i,j},o)=\infty$, for any $j\neq \ell<\overline N+1$ and any $o\in X$, where $\overline N:=\lim_i N_i <\infty$;
    \item $\Omega_i^c$ converges to $\Omega\subset X$ in the sense of finite perimeter sets, and we have $\haus^N(\Omega_i^c)\to_i \haus^N(\Omega)$, and $ \Per( \Omega_i^c) \to_i \Per(\Omega)$. Moreover $\Omega$ is a bounded isoperimetric region for its own volume in $X$;
    \item for every $j<\overline N+1$, $(X,\dist,\haus^N,p_{i,j})$ converges in the pmGH sense  to a pointed $\RCD(K,N)$ space $(X_j,\dist_j,\haus^N,p_j)$. Moreover there are isoperimetric regions $Z_j \subset X_j$ such that $\Omega^d_{i,j}\to_i Z_j$ in $L^1$-strong and $\Per(\Omega^d_{i,j}) \to_i \Per (Z_j)$;
    \item it holds that
    \begin{equation}\label{eq:UguaglianzeIntro}
    I_{(X,\dist,\haus^N)}(V) = \Per(\Omega) + \sum_{j=1}^{\overline{N}} \Per (Z_j),
    \qquad\qquad
    V=\haus^N(\Omega) +  \sum_{j=1}^{\overline{N}} \haus^N(Z_j).
    \end{equation}
\end{itemize}
\end{theorem}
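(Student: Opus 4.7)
The plan is to run a concentration-compactness argument adapted to the non-collapsed $\RCD$ setting, iteratively extracting the pieces of the minimizing sequence that concentrate at infinity. First I would fix a reference point $o\in X$. Since $\Per(\Omega_i)$ is uniformly bounded and $\haus^N(\Omega_i)=V$, standard $L^1_{\mathrm{loc}}$-compactness in the fixed space yields a subsequence and a Borel set $\Omega\subseteq X$ with $\Omega_i\cap B_R(o)\to \Omega\cap B_R(o)$ in $L^1$-strong for every $R>0$. Via a diagonal extraction choosing radii $R_i\to\infty$ slowly enough, set $\Omega_i^c\coloneqq\Omega_i\cap B_{R_i}(o)$ and let $\alpha\coloneqq \lim_i \haus^N(\Omega_i \setminus B_{R_i}(o))\ge 0$. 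If $\alpha=0$, semicontinuity of the perimeter and minimality finish the proof with $\overline N=0$. Otherwise, residual mass at infinity must be located: introduce the concentration function $Q_i(r)\coloneqq \sup_{p\in X}\haus^N\big(\Omega_i\cap B_r(p)\setminus B_{R_i}(o)\big)$ and observe that, thanks to the non-collapsing assumption $\haus^N(B_1(x))\ge v_0$, Bishop--Gromov, and the uniform perimeter bound, $Q_i(r)$ cannot vanish in the limit for $r$ large: otherwise a covering and relative isoperimetric inequality argument would force $\alpha=0$.

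Choose $p_{i,1}$ almost-realizing this supremum. By pmGH-precompactness of $\RCD(K,N)$ spaces with uniformly non-collapsed reference balls, pass to a subsequence so that $(X,\dist,\haus^N,p_{i,1})\to (X_1,\dist_1,\haus^N,p_1)$ in the pmGH sense, and apply the variable-space $L^1_{\mathrm{loc}}$-compactness for sets of finite perimeter of \cite{AmbrosioBrueSemola19} to extract a limit set $Z_1\subseteq X_1$. Define $\Omega_{i,1}^d\coloneqq \Omega_i\cap B_{S_i}(p_{i,1})$ with $S_i\to\infty$ chosen so that $\Omega_i^c$ and $\Omega_{i,1}^d$ are disjoint and $\dist(p_{i,1},o)-S_i\to\infty$. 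Then iterate the construction on $\Omega_i\setminus(\Omega_i^c\cup\Omega_{i,1}^d)$. The crucial stopping criterion is a uniform positive lower bound on the volume of any extracted piece: any non-vanishing limit must enclose at least $v_*=v_*(K,N,v_0,V)>0$, because otherwise, by the uniform small-volume isoperimetric inequality furnished by the non-collapsing and the $\RCD$ condition, its perimeter relative to its volume would blow up, contradicting $\Per(\Omega_i)\le I_X(V)+1$. This forces $N_i\le V/v_*$, so $\overline N=\lim_i N_i<\infty$ and the procedure terminates after finitely many rounds.

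The main obstacle, and in my view the most delicate step, is the \emph{no mass, no perimeter loss} identity \eqref{eq:UguaglianzeIntro} together with the isoperimetric character of $\Omega$ and of each $Z_j$. Mass conservation requires running the extraction until the residual $\haus^N\big(\Omega_i \setminus (\Omega_i^c\cup\bigcup_j\Omega_{i,j}^d)\big)$ vanishes in the limit, which follows from the quantitative form of the concentration function argument above. The inequality $\Per(\Omega)+\sum_j\Per(Z_j)\le \liminf_i\Per(\Omega_i)=I_X(V)$ is immediate from semicontinuity of the perimeter under $L^1_{\mathrm{loc}}$-convergence on possibly varying spaces, cf.\ \cite{AmbrosioHonda17,AmbrosioBrueSemola19}. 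The reverse inequality, and the isoperimetric property of the limit regions, I would prove simultaneously by a cut-and-paste argument: if some $Z_j$ (or $\Omega$) failed to be isoperimetric in its ambient $X_j$ (respectively $X$), one could replace it near $p_{i,j}$ (respectively near $o$) by a lower-perimeter competitor of the same volume, using the bounded diameter of isoperimetric regions from \autoref{thm:RegularityIsoperimetricSets} and the pmGH approximation to transplant the modification back into $\Omega_i$; since the distances between the different clusters diverge, the modifications remain disjoint and produce a sequence of competitors of total volume $V$ whose perimeter $\liminf$ is strictly less than $I_X(V)$, a contradiction. This transplantation step is the technical heart of the argument, as it must reconcile the variable-space pmGH convergence with a uniform control of perimeters under small $L^1$-perturbations localized in the non-smooth ambient $X$.
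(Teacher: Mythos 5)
A point of record first: the paper does not prove \autoref{thm:MassDecompositionINTRO} at all — it imports it verbatim from \cite{AntonelliNardulliPozzetta} (see the sentence immediately preceding the statement), so there is no internal proof to compare against. Your sketch does follow the same concentration--compactness strategy as the cited work (iterative extraction of diverging pieces along pmGH limits at infinity, after \cite{RitRosales04,Nar14,AFP21}), so the overall route is the right one; two steps, however, contain genuine gaps.

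The first is the stopping criterion. You assert that every extracted piece has volume at least $v_*=v_*(K,N,v_0,V)>0$ because otherwise ``its perimeter relative to its volume would blow up, contradicting $\Per(\Omega_i)\le I_X(V)+1$.'' This is a non sequitur: the small-volume isoperimetric inequality (\autoref{rem:IsoperimetricSmallVolumes}) gives $\Per(E)\ge\vartheta\,\haus^N(E)^{(N-1)/N}$, a lower bound that \emph{tends to zero} with the volume, so a piece of tiny volume has tiny perimeter and is perfectly compatible with any uniform perimeter bound; a large perimeter-to-volume \emph{ratio} contradicts nothing. What the concentration-function lemma actually gives is that the mass captured at stage $k$ is bounded below in terms of the \emph{residual} mass $\alpha_k$ at that stage (roughly of order $(\alpha_k/(P+\alpha_k))^N$), and since $\alpha_k$ decreases this yields no stage-independent $v_*$: a priori the extraction may run countably many times, and both the absence of mass loss and the finiteness of $\overline N$ require separate arguments. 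You also cannot repair this by invoking the strict subadditivity of the profile for small volumes (which is how the bound $\overline N\le 1+V/\eps$ recorded after \autoref{cor:UniformPositivityProfile} is obtained): in the logical architecture of this paper that subadditivity is a \emph{consequence} of the mass decomposition, via \autoref{thm:BavardPansu} and \autoref{cor:EstimateDerivativeProfileAndSubadditivity}, so using it here would be circular.

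The second gap is the transplantation step. To prove that $\Omega$ and each $Z_j$ are isoperimetric and to close \eqref{eq:UguaglianzeIntro}, you must carry a better competitor from the limit space $X_j$ back into $X$ at finite $i$; after this transfer the competitor has only \emph{approximately} the prescribed volume, so one needs a volume-fixing deformation with quantitatively controlled perimeter cost in a non-smooth ambient space. This is precisely the deformation lemma of \cite[Theorem 1.1]{AntonelliPasqualettoPozzetta21} on which the cited proof rests, and it cannot be waved through; moreover \autoref{thm:RegularityIsoperimetricSets} bounds isoperimetric regions, not arbitrary competitors, so localizing the modification needs an extra truncation argument. You correctly flag this as the technical heart of the proof, but as written the sketch does not supply the ingredient that makes it work.
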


\begin{proposition}\label{prop:ProfileDecomposition}
Let $(X,\dist,\haus^N)$ be a non-compact $\RCD(K,N)$ space. Assume there exists $v_0>0$ such that $\haus^N(B_1(x))\geq v_0$ for every $x\in X$. Let $\{p_{i,j} \st i \in \mathbb N\}$ be a sequence of points on $X$, for $j=1,\ldots,\overline{N}$ where $\overline{N}\in\N \cup \{+\infty\}$. Suppose that each sequence $\{p_{i,j}\}_i$ is diverging along $X$ and that $(X,\dist,\haus^N,p_{i,j})$ converges in the pmGH sense  to a pointed $\RCD(K,N)$ space $(X_j,\dist_j,\haus^N,p_j)$. Defining
\begin{equation}\label{eqn:GeneralizedIsoperimetricProfile}
I_{X\sqcup_{j=1}^{\overline N}X_j}(v):=\inf\left\{\Per(E)+\sum_{j=1}^{\overline N}\Per (E_j):E\subseteq X,E_j\subseteq X_j, \haus^N(E)+\sum_{j=1}^{\overline N}\haus^N(E_j)=v\right\},
\end{equation}
it holds $I_{X\sqcup_{j=1}^{\overline N} X_j}(v) = I_X(v)$ for any $v>0$.
\end{proposition}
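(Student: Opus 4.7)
\textbf{Proof plan for \autoref{prop:ProfileDecomposition}.}
The inequality $I_{X\sqcup_{j=1}^{\overline N} X_j}(v)\le I_X(v)$ is trivial: any competitor $E\subseteq X$ in $I_X(v)$ yields a competitor for $I_{X\sqcup_{j=1}^{\overline N} X_j}(v)$ by taking $E_j=\emptyset$ for every $j$. The content of the statement is therefore the reverse inequality, which we attack by a transplantation/approximation argument.

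Fix $\varepsilon>0$ and pick an almost-minimizing configuration $(E,E_1,\dots,E_{\overline N})$ for $I_{X\sqcup_{j=1}^{\overline N} X_j}(v)$ with
$
\Per(E)+\sum_{j=1}^{\overline N}\Per(E_j)\le I_{X\sqcup_{j=1}^{\overline N} X_j}(v)+\varepsilon/3.
$
If $\overline N=+\infty$, first truncate at some index $J$ so that $\sum_{j>J}\Per(E_j)<\varepsilon/6$, and redistribute the volume $\sum_{j>J}\haus^N(E_j)$ to $E_1$ (or even to $E$) by adding a thin annular region — exploiting the uniform Bishop--Gromov bound together with the uniform lower bound $v_0$ on volumes of unit balls to control the extra perimeter cost. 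This reduces us to the case of finitely many addends and leaves an error of size $O(\varepsilon)$. We may further assume, by approximation (using the coarea formula \autoref{thm:coarea} and the fact that $E$ itself can be replaced by $E\cap B_R(o)$ for $R$ large at arbitrarily small perimeter cost since $E$ has finite perimeter and finite measure), that $E$ and each $E_j$ are bounded.

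The next step is to transplant each $E_j\subseteq X_j$ to a set sitting in $X$ near $p_{i,j}$. By the pmGH convergence $(X,\dist,\haus^N,p_{i,j})\to (X_j,\dist_j,\haus^N,p_j)$ and the approximation theory for sets of finite perimeter along pmGH sequences (see \cite{AmbrosioBrueSemola19,AmbrosioHonda17}), one can find sets $E_{i,j}\subseteq X$ supported in $B_{R_j}(p_{i,j})$, for some fixed $R_j>0$ containing the bounded $E_j$ in the realization, such that
\[
\haus^N(E_{i,j})\xrightarrow[i\to\infty]{}\haus^N(E_j),\qquad \Per(E_{i,j})\xrightarrow[i\to\infty]{}\Per(E_j).
\]
Since $\dist(p_{i,j},p_{i,\ell})\to\infty$ for $j\neq \ell$, $\dist(p_{i,j},o)\to\infty$ for any fixed $o\in X$, and $E$ is bounded, for $i$ large the sets $E,E_{i,1},\dots,E_{i,J}$ are pairwise disjoint. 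Setting $F_i:=E\cup\bigsqcup_{j=1}^J E_{i,j}$ gives
\[
\haus^N(F_i)\xrightarrow[i\to\infty]{}v,\qquad \Per(F_i)\xrightarrow[i\to\infty]{}\Per(E)+\sum_{j=1}^{J}\Per(E_j).
\]

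The remaining obstacle, which I expect to be the subtlest step, is to correct the volume defect $\delta_i:=v-\haus^N(F_i)\to 0$ to obtain an admissible competitor for $I_X(v)$ with a negligible perimeter change. This is done by adding or removing a small ball far away from $F_i$: by the uniform lower bound $\haus^N(B_1(x))\ge v_0$ together with Bishop--Gromov, one finds a point $x_i\in X$ with $B_{r_i}(x_i)$ disjoint from $F_i$ and $\haus^N(B_{r_i}(x_i))=|\delta_i|$, with $r_i\to 0$. Then an application of the coarea formula (or a direct truncation of a radial cut-off) provides a finite-perimeter modification $\widetilde F_i$ of $F_i$ with $\haus^N(\widetilde F_i)=v$ and perimeter increment of order $\haus^{N-1}(\partial B_{r_i}(x_i))\lesssim r_i^{N-1}=o(1)$, using that on an $\RCD(K,N)$ space with volumes of unit balls bounded below the perimeter of small balls scales at most like $r^{N-1}$. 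Hence
\[
I_X(v)\le \Per(\widetilde F_i)\le \Per(E)+\sum_{j=1}^{J}\Per(E_j)+o(1)\le I_{X\sqcup_{j=1}^{\overline N} X_j}(v)+\varepsilon+o(1),
\]
and letting $i\to\infty$ and then $\varepsilon\to 0$ concludes the proof.
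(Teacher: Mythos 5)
Your proposal is correct and follows essentially the same route as the actual proof: the paper does not prove this proposition itself but quotes it from \cite{AntonelliNardulliPozzetta}, where the nontrivial inequality $I_X\le I_{X\sqcup_j X_j}$ is obtained exactly as you describe, by truncating to finitely many bounded pieces, transplanting each $E_j$ into $X$ near $p_{i,j}$ via recovery sequences with convergence of volumes and perimeters along the pmGH convergence, and repairing the vanishing volume defect at $o(1)$ perimeter cost using Bishop--Gromov and the uniform lower volume bound. The only imprecision is cosmetic: ``removing a small ball far away from $F_i$'' should read as removing a small piece of $F_i$ itself (chosen via coarea), and the tail-volume redistribution is more cleanly absorbed into the final volume-fixing step than into $E_1$.
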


We need to start with a mild regularity property of the isoperimetric profile, namely that it is strictly positive and continuous. Later in the paper these two properties will be sharpened in several directions. We stress that an argument similar to the one discussed in \autoref{lem:LocalHolderProfile} had already appeared in \cite[Lemma 6.9]{Milmanconvexity}, and \cite[Lemma 6.2]{Gallotast}. By a careful inspection of the proofs, the argument for proving the local H\"older property in \autoref{lem:LocalHolderProfile} is likely to be adapted in the more general case of $\CD(K,N)$ spaces $(X,\dist,\meas)$ with densities uniformly bounded above and volume of unit balls uniformly bounded below, as kindly pointed out to the authors by E. Milman. Since we do not need such level of generality, we will not give the details of the proof in such a general case.

\begin{lemma}\label{lem:LocalHolderProfile}
Let $(X,\dist,\haus^N)$ be an $\RCD(K,N)$ space. Assume that there exists $v_0>0$ such that $\haus^N(B_1(x))\geq v_0$ for every $x\in X$. Let $I_X:(0,\haus^N(X))\to \mathbb R$ be the isoperimetric profile of $X$. Then $I_X(v)>0$ for every $v>0$ and $I_X$ is continuous.
\end{lemma}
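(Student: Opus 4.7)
The plan is to treat strict positivity and continuity of $I_X$ separately. Both arguments rely on Bishop--Gromov volume and perimeter estimates for small balls, made uniform by the hypothesis $\haus^N(B_1(x))\geq v_0$, in combination with the asymptotic mass decomposition \autoref{thm:MassDecompositionINTRO}.

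For strict positivity I would argue by contradiction. Suppose $I_X(v)=0$ for some $v\in(0,\haus^N(X))$. In the compact case, standard $\mathrm{BV}$-compactness plus lower semicontinuity of the perimeter extract from a minimizing sequence a limit $\Omega\subseteq X$ with $\haus^N(\Omega)=v$ and $\Per(\Omega)=0$. In the non-compact case, apply \autoref{thm:MassDecompositionINTRO} to get a bounded isoperimetric region $\Omega\subseteq X$ together with pieces $Z_j\subseteq X_j$ in pmGH limits at infinity, satisfying $\Per(\Omega)+\sum_j\Per(Z_j)=0$ and $\haus^N(\Omega)+\sum_j\haus^N(Z_j)=v$; in particular each perimeter vanishes. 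Every ambient in sight is a connected $\RCD(K,N)$ space (as $\RCD$ spaces are geodesic), and by non-collapsed volume convergence every pmGH limit $X_j$ inherits the bound $\haus^N(B_1(\cdot))\geq v_0$. Using that $X$ is non-compact and connected, a disjoint unit balls construction inside $X$ yields $\haus^N(B_R(p_{i,j}))\gtrsim R$ uniformly in $i$ for $R$ large, and passing to the limit one gets $\haus^N(X_j)=\infty$; the same argument shows $\haus^N(X)=\infty$ when $X$ itself is non-compact. Since on a connected $\RCD$ space a finite-perimeter set of zero perimeter must be essentially empty or essentially the whole ambient (a consequence of doubling plus Poincar\'e), infinite ambient measure rules out fullness in every non-compact case, and in the compact ambient $X$ the strict bound $v<\haus^N(X)$ also forces emptiness. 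All pieces being empty contradicts $v>0$.

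For continuity at $v_0\in(0,\haus^N(X))$ I would prove a local $\frac{N-1}{N}$-H\"older estimate by modifying almost minimizers with small balls. For $v>v_0$ with $\Delta v\coloneqq v-v_0$ small, fix an $\eta$-almost minimizer $E_0$ with $\haus^N(E_0)=v_0$ and $\Per(E_0)\leq I_X(v_0)+\eta$. Since $v_0<\haus^N(X)$, the set of density-$0$ points of $E_0$ has positive $\haus^N$-measure, so pick such a point $x_0$. Bishop--Gromov together with $\haus^N(B_1(x))\geq v_0$ yields $\haus^N(B_r(x_0))\geq c r^N$ for small $r$ and $\Per(B_r(x_0))\leq C r^{N-1}$ for a.e.\ small $r$, with $c,C$ depending only on $K,N,v_0$. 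The nondecreasing map $r\mapsto\haus^N(B_r(x_0)\setminus E_0)$ satisfies $\haus^N(B_r(x_0)\setminus E_0)=(1-o(1))\haus^N(B_r(x_0))$ as $r\to 0$, so one can select a radius $r\lesssim(\Delta v)^{1/N}$ (at a continuity point of this function, with a small auxiliary adjustment at a second density-$0$ point to absorb the at most countably many jumps) so that the competitor $E\coloneqq E_0\cup B_r(x_0)$ achieves $\haus^N(E)=v$ exactly. Subadditivity of the perimeter then gives $\Per(E)\leq I_X(v_0)+\eta+C(\Delta v)^{(N-1)/N}$, whence $I_X(v)\leq I_X(v_0)+C(v-v_0)^{(N-1)/N}$ after sending $\eta\to 0$. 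The reverse inequality follows symmetrically by removing a small ball at a density-$1$ point of an almost minimizer at volume $v$.

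The main obstacle is the positivity argument in the non-compact setting: without \autoref{thm:MassDecompositionINTRO} mass could escape to infinity, so one genuinely needs generalized existence, and then a careful check that every pmGH limit $X_j$ at infinity still satisfies connectedness, a uniform volume lower bound, and hence infinite total measure, so that zero perimeter forces triviality. The continuity step is more routine, the only subtlety being the selection of a radius exactly matching the prescribed volume in the presence of the at most countably many jumps of the radial volume function.
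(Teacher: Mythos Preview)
Your proposal is correct and follows essentially the same approach as the paper, which defers both conclusions to the cited references: positivity via the mass decomposition \autoref{thm:MassDecompositionINTRO} combined with the fact that zero-perimeter sets are trivial in connected $\RCD$ spaces, and continuity via the local $(N-1)/N$-H\"older estimate obtained by adding or removing small balls at density points of almost minimizers. Your write-up in fact supplies the details that the paper omits.
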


\begin{proof}
The first conclusion can be reached arguing as in \cite[Remark 4.7]{AFP21}, building on the the top of \autoref{thm:MassDecompositionINTRO}.
The second conclusion can be reached adapting \cite[Theorem 2]{FloresNardulli20}, which shows that $I$ is locally $(1-\frac{1}{N})$-H\"older, cf. \cite[Lemma 2.23]{AntonelliNardulliPozzetta}.
\end{proof}

\subsection{Sharp concavity inequalities for the isoperimetric profile}

Given a continuous function $f:(0,\infty)\to (0,\infty)$ and parameters $K\in\setR$ and $1< N<\infty$ we are going to consider second order differential inequalities of the form
\begin{equation}\label{eq:BPf}
-f''f\ge K+\frac{(f')^2}{N-1}    
\end{equation}
and 
\begin{equation}\label{eq:Bf}
-f''\ge \frac{KN}{N-1}f^{\frac{2-N}{N}}\, .
\end{equation}
In general the function $f$ will be continuous but not twice differentiable everywhere and the inequalities will be understood in the viscosity sense, i.e. we will require that whenever $\phi:(x_0-\eps,x_0+\eps)\to \setR$ is a $C^2$ function with $\phi\le f$ on $(x_0-\eps,x_0+\eps)$ and $\phi(x_0)=f(x_0)$ the corresponding inequality \eqref{eq:BPf} or \eqref{eq:Bf} holds at $x_0$ with $\phi$ in place of $f$.

\begin{theorem}\label{thm:BavardPansu}
Let $(X,\dist,\haus^N)$ be an $\RCD(K,N)$ space. Assume that there exists $v_0>0$ such that $\haus^N(B_1(x))\geq v_0$ for every $x\in X$. 

Let $I:(0,\haus^N(X))\to (0,\infty)$ be the isoperimetric profile of $X$. Then 
\begin{enumerate}
    \item The inequality
    \begin{equation}\label{eqn:BP}
    -I''I\geq K+\frac{(I')^2}{N-1}\,\quad\text{holds in the viscosity sense on $(0,\haus^N(X))$}\, ;
    \end{equation}
    \item Let $N\leq \alpha <+\infty$ and $\xi:=I^{\frac{\alpha}{\alpha-1}}$. Hence the inequality 
    \begin{equation}\label{eqn:BayleEmpowered}
    \begin{split}
           -\xi''&\geq \frac{\alpha}{\alpha-1}\xi^{\frac{2-\alpha}{\alpha}}\left(\left(\frac{1}{N-1}-\frac{1}{\alpha-1} \right)(I')^2 + K \right) \\
           &= \frac{\alpha}{\alpha-1}\xi^{\frac{2-\alpha}{\alpha}}\left(\left(\frac{1}{N-1}-\frac{1}{\alpha-1} \right)\frac{(\alpha-1)^2}{\alpha^2} \xi^{-\frac{2}{\alpha}}(\xi')^2 + K \right)
           ,
    \end{split}
    \end{equation}
    holds in the viscosity sense on $(0,\haus^N(X))$.\\
    In particular, in the case $\alpha=N$, if $\psi:=I^{\frac{N}{N-1}}$ then 
    \begin{equation}\label{eqn:Bayle}
    -\psi''\geq \frac{KN}{N-1}\psi^{\frac{2-N}{N}}\,\quad\text{holds in the viscosity sense on $(0,\haus^N(X))$}\, .
    \end{equation}
\end{enumerate}
\end{theorem}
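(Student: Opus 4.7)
The plan is to prove (1) in the viscosity sense and then deduce (2) algebraically. For the reduction, setting $\xi = I^{\alpha/(\alpha-1)}$ a direct computation gives
\[ \xi'' = \frac{\alpha}{\alpha-1} I^{1/(\alpha-1)}\!\left[\frac{(I')^2}{(\alpha-1)I} + I''\right], \]
so substituting the bound on $I''$ from (1) and using the assumption $\alpha \geq N$ (which guarantees $\frac{1}{N-1} - \frac{1}{\alpha-1} \geq 0$) immediately yields \eqref{eqn:BayleEmpowered}, with $\alpha = N$ giving \eqref{eqn:Bayle}. Passing this to the viscosity sense will be routine via the test-function correspondence $\phi = \tilde\phi^{(\alpha-1)/\alpha}$, which is well-defined and $C^2$ near any point of tangency since $\xi > 0$ there.

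For (1), fix $V \in (0, \haus^N(X))$ and a $C^2$ test function $\phi$ touching $I$ from below at $V$; the goal is $-\phi''(V) \phi(V) \geq K + \phi'(V)^2/(N-1)$. The first step will be to invoke \autoref{thm:MassDecompositionINTRO} at volume $V$ to obtain a generalized isoperimetric configuration---pieces $\Omega \subseteq X$ and $Z_j \subseteq X_j$ in pmGH-limits at infinity---saturating $I(V)$, and to associate to each piece a mean curvature barrier via \autoref{thm:Isoperimetrici}. A Lagrange-multiplier-type argument should then allow us to choose all these barriers equal to a common value $c$: if two admissible barriers differed, enlarging the piece with smaller barrier and shrinking the one with larger barrier---using the sharp perimeter estimates in \autoref{prop:variationofarea}---would produce a competitor of the same total volume and strictly smaller total perimeter, contradicting \autoref{prop:ProfileDecomposition}.

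The core comparison will come from the family of competitors $E_t$ (for $|t|$ small) obtained by simultaneous $|t|$-enlargement of every piece when $t > 0$ and simultaneous inward $|t|$-equidistant when $t < 0$. Writing $V(t) = \haus^N(E_t)$ and $P(t) = \Per(E_t)$, the coarea formula delivers $V(t) - V = \int_0^t P(s)\de s$ (with the natural sign convention for $t < 0$), while \autoref{prop:variationofarea} applied piecewise yields the sharp Heintze--Karcher bound
\[ P(t) \leq \Phi(t) := I(V) + c I(V) t + \frac{t^2}{2} I(V)\!\left[\frac{(N-2)c^2}{N-1} - K\right] + O(|t|^3), \]
valid for both signs of $t$ after the symmetric second-order expansions of $J_{c,K,N}(|t|)$ and $J_{-c,K,N}(|t|)$. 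Integrating, the remainder $\eta(t) := V(t) - V - I(V) t$ will satisfy the \emph{one-sided} bounds $\eta(t) \leq \frac{cI(V)}{2} t^2 + O(t^3)$ for $t > 0$ and $\eta(t) \geq \frac{cI(V)}{2} t^2 + O(|t|^3)$ for $t < 0$. Combining with $I(V(t)) \leq P(t)$ and $\phi \leq I$ gives $\phi(V(t)) \leq \Phi(t)$ near $t=0$.

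The last step will be to extract the differential inequality by Taylor-expanding $\phi$ at $V$---the local H\"older continuity of $I$ from \autoref{lem:LocalHolderProfile} will guarantee $V(t) - V = I(V) t + o(t)$ and hence $(V(t) - V)^2 = I(V)^2 t^2 + o(t^2)$---and comparing with $\Phi(t)$. Matching the first order on both sides of $t = 0$ will force $\phi'(V) = c$; substituting this back, the residual inequality reads
\[ c\,\eta(t) + \frac{\phi''(V)}{2} I(V)^2 t^2 \leq \frac{t^2}{2} I(V)\!\left[\frac{(N-2)c^2}{N-1} - K\right] + o(t^2). \]
The plan is then to choose $t > 0$ if $c \leq 0$ and $t < 0$ if $c \geq 0$, so that the appropriate one-sided bound on $\eta$ gives $-c\,\eta(t) \leq -\frac{c^2 I(V)}{2} t^2 + O(|t|^3)$; dividing by $t^2/2$ and letting $t \to 0$ will yield $\phi''(V) I(V) \leq -\frac{c^2}{N-1} - K$, which is the desired inequality. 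The principal difficulty will be the lack of classical two-sided second differentiability of $V(t)$ at $t=0$; the proof circumvents this exactly by exploiting that the one-sided quadratic bounds on $\eta(t)$ point in opposite directions on the two sides of $0$, so that one of them is always compatible with the sign of $c$.
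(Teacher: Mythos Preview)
Your approach is correct and the computations check out, but it differs from the paper's proof in two notable places.

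First, the Lagrange-multiplier step is an unnecessary detour. The paper never argues that the pieces share a common barrier \emph{before} introducing the test function $\phi$; instead it applies your first-order matching argument piece by piece. For a single piece $E$ with barrier $c_E$, set $\beta_E(t)=\haus^N(E_t)+\sum_{T\neq E}\haus^N(T)$. The chain $\phi(\beta_E(t))\le I(\beta_E(t))\le J_{c_E,K,N}(t)\Per(E)+\sum_{T\neq E}\Per(T)$, with equality at $t=0$, forces first $\Per(E_t)$ to be continuous at $0$ (so $\beta_E'(0)=\Per(E)$) and then $\phi'(V)=c_E$ exactly as you argue globally. This immediately shows every piece has $\phi'(V)$ as a barrier, with no perturbation argument needed. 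Your Lagrange argument is valid (once one is careful that barriers form intervals and the argument really shows these intervals pairwise overlap), but it duplicates work.

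Second, the second-order extraction is genuinely different. The paper pushes the bootstrapping further: from $\phi(\beta_E(t))\le J_{c,K,N}(t)\Per(E)+\text{rest}$ it squeezes $\Per(E_t)$ between $J_{c,K,N}(t)\Per(E)$ and $\phi(\beta_E(t))-\text{rest}+\Per(E)$, concluding that $t\mapsto\Per(E_t)$ is differentiable at $0$ with derivative $c\Per(E)$. By coarea this makes $\widetilde\beta$ honestly twice differentiable at $0$, and the paper finishes with the symmetric difference quotient $(\phi(\widetilde\beta(t))+\phi(\widetilde\beta(-t))-2I(V))/t^2$, avoiding any case analysis on the sign of $c$. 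Your route---using only the one-sided bounds $\eta(t)\le\tfrac{cI(V)}{2}t^2+O(t^3)$ for $t>0$ and the reverse for $t<0$, then choosing the side of $0$ according to $\mathrm{sign}(c)$---is a clean alternative that sidesteps the twice-differentiability of $V(t)$ entirely. Both arguments are sharp; the paper's buys exact first-order behaviour of the perimeter of equidistants as a by-product, while yours is shorter.
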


\begin{proof}
Let us assume $\haus^N(X)=+\infty$, the compact case being completely analogous.
Let us prove \eqref{eqn:BP} first.
Let $v\in (0,\infty)$ be fixed. Take $\Omega_i$ a minimizing sequence of bounded sets of volume $v$ and let $\Omega,Z_j$ be the isoperimetric regions in $X,X_j$ respectively, according to the notation of \autoref{thm:MassDecompositionINTRO}.

Let us consider a smooth function $\phi$ such that $\phi\le I$ in a neighbourhood of $v$ and $\phi(v)=I(v)$. We wish to prove that
\begin{equation}\label{eq:diffineq}
-\phi''(v)\phi(v)\ge K+ \frac{\left(\phi'(v)\right)^2}{N-1}\, .
\end{equation}
Let $E$ be one of the isoperimetric sets $\Omega,Z_j$, and let $E_t$ be the $t$-enlargements, in the associated space $X,X_j$, of $E$ for $t\in (-\eps,\eps)$. To be more precise, for $t\geq 0$, the points in $E_t$ are those points of $X$ that have distance $\leq t$ from $\overline E$, while, for $t\leq 0$, the points in $E_t$ are those points of $X$ such that the distance from $X\setminus E$ is $\geq -t$.
Let $c$ be a mean curvature barrier for $E$ provided by \autoref{thm:Isoperimetrici}. We now show that $c=\phi'(v)$.\\
As we observed in \eqref{eq:extareabd},
\begin{equation}\label{eqn:PerEt}
\Per(E_t)\le J_{c,K,N}(t)\Per(E) ,\quad\text{for any $t\in (-\eps,\eps)$}\, .
\end{equation}
By simple computations, 
\begin{equation}\label{eq:use}
J'_{c,K,N}(0)=c\, \quad \text{and}\quad  J''_{c,K,N}(0)=-K+\frac{N-2}{N-1}c^2\, .
\end{equation}
Let us show that $t\mapsto \Per(E_t)$ is differentiable at $t=0$. 

Setting $\beta(t):=\haus^N(E_t)+\sum_{T\in\{\Omega,Z_1,\dots,Z_{\overline N}\}, T\neq E}\haus^N(T)$, by the coarea formula we get that $\beta(t)$ is continuous at $t=0$. Hence, by \eqref{eqn:GeneralizedIsoperimetricProfile}, and \eqref{eqn:PerEt}, we have that, for any $t\in (-\eps,\eps)$,
\begin{equation}\label{eqn:FINAL}
\begin{split}
J_{c,K,N}(t)\Per(E)&+\sum_{T\in\{\Omega,Z_1,\dots,Z_{\overline N}\}, T\neq E}\Per(T)\geq 
\Per(E_t)+\sum_{T\in\{\Omega,Z_1,\dots,Z_{\overline N}\}, T\neq E}\Per(T) \\ &\ge I_{X\sqcup_{j=1}^{\overline N}X_j}(\beta(t))=I(\beta(t))\ge \phi(\beta(t))\, .
\end{split}
\end{equation}
From \eqref{eqn:FINAL}, the continuity of $\beta(t)$ at $t=0$, the fact that \begin{equation}
\phi(v)=I(v)=\Per(E)+\sum_{T\in\{\Omega,Z_1,\dots,Z_{\overline N}\}, T\neq E}\Per(T),
\end{equation}
we first deduce that $\Per(E_t)$ is continuous at $t=0$. Hence, by the continuity of $t\mapsto \Per(E_t)$ at $t=0$ and the coarea formula, $t\mapsto \beta(t)$ is differentiable at $t=0$. Exploiting this information again in \eqref{eqn:FINAL} we obtain that $t\mapsto \Per(E_t)$ is differentiable at $t=0$.

By using the latter differentiability together with \eqref{eqn:PerEt} we get that 
\begin{equation}
\frac{\di }{\di t}\Per(E_t)|_{t=0}=c\Per(E)\, .
\end{equation}
By coarea, this implies that
\begin{equation}
t\mapsto\haus^N(E_t) \, ,
\end{equation}
is twice differentiable at $t=0$ with 
\begin{equation}
\frac{\di }{\di t}\haus^N(E_t)|_{t=0}=\Per(E)\, \quad\text{and}\quad
\frac{\di ^2}{\di t^2}\haus^N(E_t)|_{t=0}=c\Per(E)\, .
\end{equation}

Moreover, from \eqref{eq:UguaglianzeIntro} and the discussion above, $\beta(0)=v$, $\beta'(0)=\Per(E)$, $\beta''(0)=c\Per(E)$ and $\phi'(v)\beta'(0)=c\Per(E)$, therefore $\phi'(v)=c$, which is the sought claim.

Hence, we showed that the barrier $c$ given by \autoref{thm:Isoperimetrici} is unique and $c:=\varphi'(v)$ for every isoperimetric set $\Omega,Z_j$ in any of the limit spaces $X,X_j$. 
\medskip

Let us define $\widetilde\beta(t):=\haus^N(\Omega_t)+\sum_{j=1}^{\overline N}\haus^N((Z_j)_t)$ for every $t\in (-\varepsilon,\varepsilon)$ with $\varepsilon>0$ small enough.

Arguing as in \eqref{eqn:FINAL} for each isoperimetric region separately, we get that $\widetilde\beta(t)$ is twice differentiable at $t=0$. Moreover
\begin{equation}\label{eqn:LEI}
\phi(\widetilde\beta(t))\le J_{c,K,N}(t)\left(\Per(\Omega)+\sum_{j=1}^{\overline N}\Per(Z_j)\right)\, \quad\text{for any $t\in (-\eps,\eps)$}\, ,
\end{equation}
$\widetilde\beta(0)=v$, $\widetilde\beta'(0)=I(v)$, and $\widetilde\beta''(0)=cI(v)$.
Using \eqref{eqn:LEI}, we can pass to the limit as $t\to0$ in
\begin{equation}
\begin{split}
    \frac{\phi(\widetilde\beta(t))+\phi(\widetilde\beta(-t))-2\phi(\widetilde\beta(0))}{t^2} &=   \frac{\phi(\widetilde\beta(t))+\phi(\widetilde\beta(-t))-2I(v)}{t^2} \\& \le \frac{J_{c,K,N}(t)+J_{c,K,N}(-t)-2}{t^2}I(v)\, ,
\end{split}
\end{equation}
to obtain
\begin{equation}
\phi''(\widetilde\beta(0))\widetilde\beta'(0)^2+\phi'(\widetilde\beta(0))\widetilde\beta''(0)\le I(v)\left(c^2\frac{N-2}{N-1}-K\right)\, ,
\end{equation}
Using the previous identities, we obtain the sought conclusion 
\begin{equation}
    \phi''(v)\phi(v)=\phi''(v)I(v)\leq -\frac{\phi'(v)^2}{N-1}-K\, .
\end{equation}
\medskip

Let us prove \eqref{eqn:BayleEmpowered}. We need to prove that for every $v\in (0,\haus^N(X))$, if $\varphi$ is a smooth function in a neighbourhood $U$ of $v$ such that $\phi\leq\xi$ on $U$ and $\phi(v)=\xi(v)$, then \eqref{eqn:BayleEmpowered} holds with $\varphi$ in place of $\xi$. The latter inequality is obtained noticing that $\phi^{\frac{\alpha-1}{\alpha}}\leq I$ on $U$ and $\phi^{\frac{\alpha-1}{\alpha}}(v)=I(v)$, using \eqref{eqn:BP} with some easy algebraic computations.
\end{proof}

\begin{remark}
Let us point out that \eqref{eqn:BayleEmpowered}, and thus \eqref{eqn:Bayle}, holds also in the sense of second order incremental quotients considered in \cite{Bayle03,Bayle04}. Given a continuous function $f:(0,\haus^N(X))\to(0,\infty)$ and $x\in (0,\haus^N(X))$ we denote
\begin{equation}
\overline{D^2}f(x):=\limsup_{h\downarrow 0}\frac{f(x+h)+f(x-h)-2f(x)}{h^2}\, .
\end{equation}
Then, for example, the very same proof of \eqref{eqn:Bayle}, where the bound was considered in the viscosity sense, shows that 
\begin{equation}
\overline{D^2}\psi\le -\frac{KN}{N-1}\psi^{\frac{2-N}{N}}\, \quad\text{on $(0,\haus^N(X))$}\, .
\end{equation}
\end{remark}

\subsection{Fine properties of the isoperimetric profile}

In this subsection we derive further regularity properties and asymptotics for small volumes of the isoperimetric profile. They will be particularly useful to study the stability of isoperimetric regions under non-collapsed (pointed) Gromov--Hausdorff convergence,  diameter bounds and connectedness properties for isoperimetric regions, and the asymptotic isoperimetric behaviour of non-collapsed spaces with lower Ricci curvature bounds (see also the forthcoming \cite{AntonelliPasqualettoPozzettaSemolaASYMPTOTIC}).\\
The arguments essentially rely only on the sharp concavity properties of the isoperimetric profile and on qualitative isoperimetric inequalities.
\medskip

The following corollary is a standard consequence of \autoref{thm:BavardPansu}, and therefore we omit its proof.

\begin{corollary}\label{cor:mildpropprofile}
Let $(X,\dist,\haus^N)$ be an $\RCD(K,N)$ space with $K\leq 0$. Assume that there exists $v_0>0$ such that $\haus^N(B_1(x))\geq v_0$ for every $x\in X$.

For every $v\in(0,\haus^N(X))$ and for every $\delta\in(v,\haus^N(X)-v)$ there exists $C>0$ such that the function $I^{\frac{N}{N-1}}(x)-Cx^2$ is concave on $(v-\delta,v+\delta)$. Hence the isoperimetric profile function $I$ has right derivative $I_+'(v)$ and left derivative $I_-'(v)$ defined for every $v\in (0,\haus^N(X))$. Moreover the isoperimetric profile $I$ is differentiable in all $(0,\haus^N(X))$ except at most countably many values, it is locally Lipschitz, and it is twice differentiable almost-everywhere. Moreover \eqref{eqn:BP}, \eqref{eqn:BayleEmpowered}, and \eqref{eqn:Bayle} also hold pointwise almost everywhere.
\end{corollary}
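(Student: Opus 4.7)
The plan is to read off all the assertions from the viscosity inequality in \autoref{thm:BavardPansu}(2), namely $-\psi''\geq \tfrac{KN}{N-1}\psi^{(2-N)/N}$ on $(0,\haus^N(X))$, where $\psi:=I^{N/(N-1)}$.

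\textbf{Step 1: localize and obtain semiconcavity of $\psi$.} Fix $v\in(0,\haus^N(X))$ and $\delta$ small enough that $[v-\delta,v+\delta]\subset (0,\haus^N(X))$. By \autoref{lem:LocalHolderProfile}, $I$ is continuous and strictly positive, hence $\psi$ is continuous and bounded away from zero on $[v-\delta,v+\delta]$. Since $K\leq 0$, the right-hand side $\tfrac{KN}{N-1}\psi^{(2-N)/N}$ is bounded below on $[v-\delta,v+\delta]$ by some constant $-2C$ with $C=C(v,\delta)\geq 0$. The viscosity bound then reads $\psi''\leq 2C$ in the viscosity sense on $(v-\delta,v+\delta)$, which by the classical characterization of semiconcavity yields that $\psi(x)-Cx^2$ is concave on $(v-\delta,v+\delta)$. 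This establishes the first assertion with the stated constant $C$.

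\textbf{Step 2: one-variable consequences of semiconcavity.} The semiconcavity of $\psi$ on every compact subinterval of $(0,\haus^N(X))$ immediately yields, by standard one-dimensional convex analysis, that: $\psi$ admits left and right derivatives at every point (as the sum of a concave function and a smooth quadratic); the set of points of non-differentiability of $\psi$ is at most countable (monotonicity of the derivative of the concave part gives at most countably many jumps); $\psi$ is locally Lipschitz; and by the one-dimensional Aleksandrov theorem (the derivative of a concave function has bounded variation, hence is differentiable a.e.) $\psi$ is twice differentiable almost everywhere. Since $\psi$ is locally bounded away from zero on $(0,\haus^N(X))$, composition with the smooth strictly positive map $t\mapsto t^{(N-1)/N}$ transfers all these properties from $\psi$ to $I=\psi^{(N-1)/N}$, giving the claimed regularity of $I$.

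\textbf{Step 3: pointwise validity almost everywhere.} At any point $v_0$ of twice differentiability of $\psi$, for every $\varepsilon>0$ the parabola
\[
P_\varepsilon(x):=\psi(v_0)+\psi'(v_0)(x-v_0)+\tfrac12\bigl(\psi''(v_0)-\varepsilon\bigr)(x-v_0)^2
\]
is smooth, touches $\psi$ from below at $v_0$ in a neighborhood of $v_0$, and agrees with $\psi$ at $v_0$; testing the viscosity inequality \eqref{eqn:Bayle} against $P_\varepsilon$ and letting $\varepsilon\to 0$ yields \eqref{eqn:Bayle} pointwise at $v_0$. The same argument applied directly to $I$ at its points of twice differentiability (a full-measure subset of $(0,\haus^N(X))$ thanks to Step 2) gives \eqref{eqn:BP} pointwise a.e., while \eqref{eqn:BayleEmpowered} follows from \eqref{eqn:BP} by a direct algebraic manipulation at such points.

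The only mildly delicate step is the last one, i.e.\ the standard passage from viscosity inequalities to pointwise inequalities at twice-differentiability points; everything else is standard one-variable semiconcave calculus.
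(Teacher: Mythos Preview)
Your argument is correct and is precisely the standard route the paper has in mind: the authors omit the proof, calling it ``a standard consequence of \autoref{thm:BavardPansu}'', and the later proof of \autoref{cor:UniformLipschitzProfile} explicitly invokes ``the argument in the proof of \autoref{cor:mildpropprofile}'' to mean exactly your Step~1 (semiconcavity of $\psi$ with constant $C\ge -\tfrac{KN}{2(N-1)}\psi^{(2-N)/N}$ on the interval). Your Steps~2 and~3 are the expected one-variable semiconcave calculus.
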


{ We now aim at giving a slight improvement of \autoref{cor:mildpropprofile}. We first need an auxiliary result.\\
The following \autoref{lem:ConsequenceSmallDiameter}
is rather classical and it holds in the class of locally doubling metric measure spaces satisfying a Poincaré inequality and a uniform noncollapsing assumption on the volumes of unit balls. Such a result has its roots in the papers \cite{Buser82, Kanai86, ChavelFeldman91}, and \cite{CoulhonSaloffCoste95}. Since we only need it in the setting of $N$-dimensional $\RCD(K,N)$ spaces, we state it in this setting. See also \cite[Proposition 3.20]{AntonelliPasqualettoPozzetta21} (cf. \cite[Remark 3.21]{AntonelliPasqualettoPozzetta21}).

%

\begin{lemma}\label{lem:ConsequenceSmallDiameter}
Let $K\in\mathbb R$ and $N\geq 2$. Let $(X,\dist,\haus^N)$ be an $\RCD(K,N)$ space.
Let us assume there exists $v_0>0$ such that $\haus^N(B_1(x))\geq v_0$ for every $x\in X$. Then there exist $v_1:=v_1(K,N,v_0)$, and $\vartheta:=\vartheta(K,N,v_0)$ such that
\begin{equation}\label{eqn:BoundBelowIsopProfile}
I(v)\geq \vartheta v^{\frac{N-1}{N}},\qquad \text{for all $v\leq v_1$}.
\end{equation}
\end{lemma}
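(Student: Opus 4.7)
The plan is to deduce the lemma directly from \autoref{rem:IsoperimetricSmallVolumes}. By definition of the $\RCD(K,N)$ condition, $(X,\dist,\haus^N)$ is in particular a $\CD(K,N)$ space, and the uniform ball volume bound $\haus^N(B_1(x)) \geq v_0$ is exactly the remaining hypothesis needed to invoke \autoref{rem:IsoperimetricSmallVolumes}. This produces constants $v_1 = v_1(K,N,v_0) > 0$ and $\vartheta = \vartheta(K,N,v_0) > 0$ such that every set of finite perimeter $E \subseteq X$ with $\haus^N(E) \leq v_1$ satisfies
\[
\Per(E) \geq \vartheta\, \haus^N(E)^{\frac{N-1}{N}}.
\]

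With these constants in hand the argument is essentially one line. Fix $v \in (0,v_1]$ and let $E \subseteq X$ be an arbitrary Borel set with $\haus^N(E) = v$. If $\Per(E) = +\infty$ the inequality $\Per(E) \geq \vartheta\, v^{(N-1)/N}$ is trivial; otherwise the implication \eqref{eq:IsoperimetricSmallVolumes} from \autoref{rem:IsoperimetricSmallVolumes} gives precisely this bound. Taking the infimum over all competitors in the definition \eqref{eq:isoIntro} of $I(v)$ therefore yields $I(v) \geq \vartheta\, v^{(N-1)/N}$ for every $v \leq v_1$, which is the sought conclusion. The same constants $v_1$ and $\vartheta$ coming from \autoref{rem:IsoperimetricSmallVolumes} work unchanged, so the asserted dependence on $K$, $N$, $v_0$ only is automatic.

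There is no genuine obstacle here, as is already signalled by the word \emph{immediate} in the statement: all of the geometric content has been packaged into \autoref{rem:IsoperimetricSmallVolumes} (whose proof in \cite{AntonelliPasqualettoPozzetta21} relies on a local almost-Euclidean isoperimetric inequality combined with Bishop--Gromov volume comparison), and the present lemma is just a reformulation of that inequality at the level of the isoperimetric profile.
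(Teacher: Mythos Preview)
Your proof is correct and matches the paper's approach: the paper does not give an explicit proof, stating instead that the lemma is ``an immediate consequence of the isoperimetric inequality for small volumes \autoref{rem:IsoperimetricSmallVolumes}'', which is exactly what you unpack.
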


\begin{remark}\label{rmk:ConseqMJ}
Under the assumptions of \autoref{lem:ConsequenceSmallDiameter}, by bounding from above $I$ with the perimeter of balls, we have that there exist constants \(C_0=C_0(K,N,v_0)>0\) and \(\bar v=\bar v(K,N,v_0)>0\) such that
\begin{equation}\label{eq:EstRatio}
\frac{I(v)}{v^{\frac{N-1}{N}}}\leq C_0,\quad\text{ for every }v\leq\bar v.
\end{equation}
%
%
\end{remark}
}

Joining the second order differential inequalities derived in \autoref{thm:BavardPansu} with \autoref{lem:ConsequenceSmallDiameter} we derive further analytical properties of the isoperimetric profile.

\begin{proposition}\label{prop:SharpenedConcavityAndLimit}
 Let $K\leq 0$ and $N\geq 2$. Let $(X,\dist,\haus^N)$ be an $\RCD(K,N)$ space such that there exists $v_0\geq 0$ with $\haus^N(B_1(x))\geq v_0$ for every $x\in X$. Then the following hold.
 \begin{enumerate}
 \item There exist $C:=C(K,N,v_0)>0$ and $v_1:=v_1(K,N,v_0)>0$ such that the function $\eta(v):=I^{\frac{N}{N-1}}(v)-Cv^{\frac{2+N}{N}}$ is concave on the interval $[0,v_1]$. Moreover, if $N=2$, we can choose $C=-K$ and the claim holds on $[0,\haus^N(X)]$, if $\haus^N(X)<\infty$, or on $[0,\haus^N(X))$, if $\haus^N(X)=\infty$.\\
 As a consequence the function
 \[
 [0,\haus^N(X))\ni v\mapsto \frac{I(v)}{v^{\frac{N-1}{N}}},
 \]
 has a finite strictly positive limit as $v\to 0$.


{
\item There exists $\tilde v_1 := \tilde v_1(K,N,v_0) \in (0,v_1]$ such that $I$ is concave on $[0,\tilde v_1]$.}
 \end{enumerate}
 \end{proposition}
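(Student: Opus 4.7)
The plan is to combine the viscosity differential inequalities for $\psi=I^{N/(N-1)}$ and for $I$ proved in \autoref{thm:BavardPansu} with the quantitative isoperimetric lower bound $I(v)\ge\vartheta v^{(N-1)/N}$ from \autoref{lem:ConsequenceSmallDiameter}, valid on $[0,v_1]$. The algebraic content of each concavity statement will amount to showing that the singular factor $\psi^{(2-N)/N}$ (resp.\ $1/I$) appearing on the right-hand side of the differential inequality is dominated, up to a constant depending only on $K,N,v_0$, by $v^{(2-N)/N}$ (resp.\ $v^{(1-N)/N}$), which is precisely the second derivative profile of the model functions $v^{(N+2)/N}$ and $v^{(N+1)/N}$.

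For item (1), we apply \autoref{thm:BavardPansu}(2) to obtain $\psi''\le -\frac{KN}{N-1}\psi^{(2-N)/N}$ in the viscosity sense. Since $K\le 0$ and $(2-N)/N\le 0$ for $N\ge 2$, we raise $\psi\ge\vartheta^{N/(N-1)}v$ (which follows from \autoref{lem:ConsequenceSmallDiameter} on $[0,v_1]$) to the non-positive power $(2-N)/N$ to get
\[
\psi''\le -\frac{KN}{N-1}\vartheta^{(2-N)/(N-1)}\,v^{(2-N)/N}\qquad\text{on }[0,v_1],
\]
in the viscosity sense. Choosing $C=C(K,N,v_0)>0$ so that $C\cdot\frac{2(N+2)}{N^2}$ equals the right-hand side coefficient above, we have $(Cv^{(N+2)/N})''\ge\psi''$, hence $\eta=\psi-Cv^{(N+2)/N}$ is concave on $[0,v_1]$ (using that a continuous function with non-positive second derivative in the viscosity sense is concave, and that subtracting a $C^2$ function preserves viscosity inequalities). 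For $N=2$ the exponent $(2-N)/N$ vanishes, so the bound on $\psi^{(2-N)/N}$ is trivial, no lower bound on $I$ is needed, and $C=-K$ works on the full interval.

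For the existence of $\lim_{v\to 0}I(v)/v^{(N-1)/N}$, first observe $I(v)\to 0$ as $v\to 0$ by \autoref{rmk:ConseqMJ}, hence $\psi(0)=0$ and we may extend $\eta$ continuously by $\eta(0)=0$. For a concave function on $[0,v_1]$ vanishing at the origin, $v\mapsto\eta(v)/v$ is non-increasing on $(0,v_1]$, so admits a limit in $(-\infty,+\infty]$; since $Cv^{(N+2)/N}/v=Cv^{2/N}\to 0$, also $\lim_{v\to 0}\psi(v)/v$ exists. By the algebraic identity $\psi(v)/v=\bigl(I(v)/v^{(N-1)/N}\bigr)^{N/(N-1)}$, the limit of $I(v)/v^{(N-1)/N}$ exists in $[0,+\infty]$; it is finite by the upper bound of \autoref{rmk:ConseqMJ} and strictly positive by the lower bound of \autoref{lem:ConsequenceSmallDiameter}.

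For item (2), we apply \autoref{thm:BavardPansu}(1): since $K\le 0$ and $(I')^2/((N-1)I)\ge 0$, the inequality $-I''I\ge K+(I')^2/(N-1)$ yields $I''\le -K/I$ in the viscosity sense. Using $I\ge\vartheta v^{(N-1)/N}$ on $[0,v_1]$ gives $I''\le -K\vartheta^{-1}v^{(1-N)/N}$, and a constant $\tilde C=\tilde C(K,N,v_0)>0$ with $\tilde C\cdot\frac{N+1}{N^2}\ge -K\vartheta^{-1}$ makes $\tilde\eta=I-\tilde C v^{(N+1)/N}$ concave on $[0,v_1]$ by the same viscosity argument as above. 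The main technical subtlety in the whole proof is keeping track of the viscosity interpretation of the one-dimensional differential inequalities and checking that pointwise algebraic manipulations with the model functions $v^{(N+2)/N}$ and $v^{(N+1)/N}$ are consistent with the viscosity framework, which is straightforward since those model functions are smooth on $(0,v_1]$.
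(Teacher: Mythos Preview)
Your proof is correct. Item (1) is essentially identical to the paper's argument: both combine the viscosity inequality $-\psi''\ge \frac{KN}{N-1}\psi^{(2-N)/N}$ from \autoref{thm:BavardPansu}(2) with the lower bound $I(v)\ge\vartheta v^{(N-1)/N}$ from \autoref{lem:ConsequenceSmallDiameter} to bound $\psi^{(2-N)/N}$ by a multiple of $v^{(2-N)/N}$, then choose $C$ to match the second derivative of $Cv^{(N+2)/N}$; the deduction of the limit from concavity of $\eta$ and $\eta(0)=0$ is likewise the same.

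For item (2) you take a genuinely different and more direct route. The paper derives item (2) from item (1) via a chain rule computation: writing $I=(\psi)^{(N-1)/N}$, it expands $I''$ in terms of $\psi''$ and $(\psi')^2$, uses the bound $\psi''\le C\tfrac{2(N+2)}{N^2}v^{(2-N)/N}$ obtained in item (1), drops the non-positive $(\psi')^2$ term, and bounds $I^{-1/(N-1)}$ via \autoref{lem:ConsequenceSmallDiameter}. You instead go straight to \autoref{thm:BavardPansu}(1), drop the non-negative $(I')^2/((N-1)I)$ term from $-I''\ge K/I+(I')^2/((N-1)I)$ to get $I''\le -K/I$, and then bound $1/I$ using the isoperimetric lower bound. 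Your approach is shorter and avoids the chain rule bookkeeping; the paper's approach has the minor virtue of making item (2) a formal consequence of item (1), so that only the $\psi$ inequality from \autoref{thm:BavardPansu} is needed rather than both differential inequalities. Both arguments require the same care in interpreting the one-dimensional inequalities in the viscosity sense, which you handle correctly.
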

 
 \begin{proof}
 Since the proof follows from \autoref{thm:BavardPansu}, \autoref{lem:ConsequenceSmallDiameter} and an elementary one-dimensional analysis, we just sketch it.\\
 Let us prove item (1). Let us first deal with the case $N>2$. It suffices to take 
 \begin{equation}\label{eqn:ChoiceConst}
 C:=\frac{-KN^3\vartheta^{\frac{2-N}{N-1}}}{2(N-1)(N+2)}, \, \text{where $\vartheta$ and $v_1$ are the constants in \eqref{eqn:BoundBelowIsopProfile}}\, .
 \end{equation}
 With this choice of the constant $C$, it can be readily proved that $-\eta''\geq 0$ holds in the viscosity sense on $(0,v_1)$. This is proved by a straightforward computation using item (2) of \autoref{thm:BavardPansu}, \eqref{eqn:BoundBelowIsopProfile}, and the choice of the constant $C$. The conclusion then follows since $\eta$ is continuous on $(0,v_1)$ thanks to \autoref{lem:LocalHolderProfile}.
 %
%
%
  

Let us deal with the remaining case $N=2$. From \autoref{thm:BavardPansu} we get that the function $\psi:=I^{2}$ satisfies 
 \[
 -\psi''\geq 2K,
 \]
 in the viscosity sense on $(0,\haus^N(X))$. 
 Hence the function $\eta(x):=\psi(x)+Kx^2$ satisfies $-\eta''\geq 0$ in the viscosity sense on $(0,\haus^N(X))$. Therefore, in this case we can take $C:=-K$ on the whole interval $[0,\haus^N(X)]$ or $[0,\haus^N(X))$, depending on whether $\haus^N(X)$ is finite or infinite.

 The last conclusion in the statement of item (1) readily follows from the fact that $v\mapsto\eta(v)/v$ is non-increasing on $[0,v_1]$, since $\eta$ is concave on $[0,v_1]$.
%

 \medskip

{
Let us now prove item (2). By concavity of $\eta$, exploiting \eqref{eqn:BoundBelowIsopProfile} and \eqref{eq:EstRatio}, for $A>1$ we find
\begin{equation}
    \begin{split}
        \eta'_+(v) & \ge \frac{\eta(Av) - \eta(v)}{Av - v} 
        \ge \frac{1}{(A-1)v} \bigg(
        \vartheta^{\frac{N}{N-1}} Av - C A^{\frac{2+N}{N}} v^{\frac{2+N}{N}} - C_0^{\frac{N}{N-1}}v + C v^{\frac{2+N}{N}}
        \bigg)\\
        &\ge \vartheta^{\frac{N}{N-1}}  - \frac{C_0^{\frac{N}{N-1}}}{A-1} + \frac{1}{(A-1)v} \bigg(
        - C A^{\frac{2+N}{N}} v^{\frac{2+N}{N}}  + C v^{\frac{2+N}{N}}
        \bigg),
    \end{split}
\end{equation}
for any $0<v<Av< v_1$. Hence choosing first $A$ sufficiently large and then restricting $v \in (0,\tilde v_1]$ for $\tilde v_1 < v_1$ small enough, we obtain that $\eta'_+(v) \ge \tfrac12\vartheta^{\frac{N}{N-1}}$ on $v \in (0,\tilde v_1]$. This implies $\tfrac{N}{N-1} I^{\frac{1}{N-1}}(v) I'_+(v) \ge \tfrac12 \vartheta^{\frac{N}{N-1}}$ on $v \in (0,\tilde v_1]$, and thus
\begin{equation}
I'_+(v) \ge \frac{N-1}{2N} \vartheta^{\frac{N}{N-1}} \Big(C_0^{\frac{1}{N-1}} v^{\frac1N}\Big)^{-1}.
\end{equation}
Therefore \eqref{eqn:BP} implies that $I''\le0$ on $(0,\tilde v_1]$ in the viscosity sense, up to decrease $\tilde v_1$, proving item (2).
}
\end{proof}

\begin{remark}
Item (1) of \autoref{prop:SharpenedConcavityAndLimit} answers in the affirmative to Questions 2 and 3 in \cite{NardulliOsorio20} in the more general setting of $N$-dimensional $\RCD(K,N)$ spaces. As a consequence, it is possible to drop the additional hypothesis (H) in \cite[Lemma 4.9]{NardulliOsorio20}.
\end{remark}

Let us now derive some further consequences from the concavity properties above.

\begin{proposition}\label{cor:EstimateDerivativeProfileAndSubadditivity}
Let $K\leq 0$ and $N\geq 2$. Let $(X,\dist,\haus^N)$ be an $\RCD(K,N)$ space such that there exists $v_0\geq 0$ such that $\haus^N(B_1(x))\geq v_0$ for every $x\in X$. Let us denote $\vartheta:=\lim_{v\to 0}I(v)/v^{\frac{N-1}{N}}>0$, which exists due to item (1) of \autoref{prop:SharpenedConcavityAndLimit}. Hence the following hold.
\begin{enumerate}
    \item 
    There holds
    \[
    \lim_{v\to0} \frac{I'_+(v)}{v^{-\frac1N}} = \frac{N-1}{N} \vartheta.
    \]
    \item Let $\alpha>N$. Hence there exists
    $\varepsilon=\eps(K,N,v_0,\alpha)>0$
    such that $I^{\frac{\alpha}{\alpha-1}}$ is concave on $(0,\varepsilon)$. As a consequence, $I$ is strictly subadditive on $(0,\varepsilon)$.
\end{enumerate}
\end{proposition}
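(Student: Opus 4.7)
The plan is to deduce both items from the viscosity differential inequalities of \autoref{thm:BavardPansu}, combined with the concavity properties of \autoref{prop:SharpenedConcavityAndLimit}.

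For item~(1), I would use that $\eta(v):=I^{N/(N-1)}(v)-Cv^{(N+2)/N}$ is concave on $[0,v_1]$ with $\eta(0)=0$, by \autoref{prop:SharpenedConcavityAndLimit}. A concave function vanishing at the origin satisfies $\lim_{v\to 0^+}\eta'_+(v)=\eta'_+(0)=\lim_{v\to 0^+}\eta(v)/v$. Since $Cv^{(N+2)/N}$ has vanishing derivative at $0$ and $\eta(v)/v\to\vartheta^{N/(N-1)}$ by the asymptotic of $I(v)/v^{(N-1)/N}$, one obtains $(I^{N/(N-1)})'_+(v)\to\vartheta^{N/(N-1)}$ as $v\to 0^+$. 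Inverting the chain rule as $I'_+(v)=\tfrac{N-1}{N}I(v)^{-1/(N-1)}(I^{N/(N-1)})'_+(v)$ and using $I(v)\sim\vartheta v^{(N-1)/N}$ yields $I'_+(v)\sim\tfrac{N-1}{N}\vartheta\, v^{-1/N}$, which is the claim.

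For item~(2), I would apply the viscosity inequality \eqref{eqn:BayleEmpowered} to $\xi:=I^{\alpha/(\alpha-1)}$. The bracket on the right-hand side, $\big(\tfrac{1}{N-1}-\tfrac{1}{\alpha-1}\big)(I')^2+K$, has strictly positive coefficient $\tfrac{\alpha-N}{(N-1)(\alpha-1)}$ for $\alpha>N$. By item~(1), $(I'_+(v))^2\to +\infty$ as $v\to 0^+$, so the bracket becomes non-negative on some interval $(0,\varepsilon)$ with $\varepsilon=\varepsilon(K,N,v_0,\alpha)>0$. Concretely, for any smooth $\phi$ touching $\xi$ from below at some $v_0\in(0,\varepsilon)$, the function $\phi^{(\alpha-1)/\alpha}$ touches $I$ from below at $v_0$, which (via \autoref{cor:mildpropprofile} and the semi-concavity of $I^{N/(N-1)}$) forces $v_0$ to be a differentiability point of $I$ with $(\phi^{(\alpha-1)/\alpha})'(v_0)=I'(v_0)$; applying \eqref{eqn:BP} and rearranging yields $-\phi''(v_0)\geq 0$. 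Since $\xi$ is continuous by \autoref{lem:LocalHolderProfile}, this viscosity supersolution property is equivalent to $\xi$ being concave on $(0,\varepsilon)$.

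The strict subadditivity of $I$ is then a standard consequence: the concavity of $\xi$ with $\lim_{v\to 0^+}\xi(v)=0$ yields subadditivity $\xi(a+b)\leq \xi(a)+\xi(b)$, i.e.\ $I(a+b)^{p}\leq I(a)^{p}+I(b)^{p}$ with $p=\alpha/(\alpha-1)>1$, and the strict Minkowski inequality $(x^{p}+y^{p})^{1/p}<x+y$ for positive $x,y$ and $p>1$ produces $I(a+b)<I(a)+I(b)$ whenever $a,b,a+b\in(0,\varepsilon)$. The main subtlety lies in the viscosity argument of item~(2): the quantity $(I')^2$ appearing in \eqref{eqn:BayleEmpowered} must be tracked through the test functions, and the asymptotic of item~(1) is precisely what makes the bracket positive on a neighbourhood of the origin, with the countably many non-differentiability points of $I$ handled by observing that no smooth test function from below can exist there.
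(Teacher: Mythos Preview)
Your argument for item~(1) is correct and, in fact, cleaner than the paper's: you use directly that $\eta(v)=I^{N/(N-1)}(v)-Cv^{(N+2)/N}$ is concave with $\eta(0)=0$, so $\eta'_+(v)\to\eta'_+(0)=\lim_{v\to0}\eta(v)/v=\vartheta^{N/(N-1)}$, and then invert via the chain rule. The paper instead works with $\tilde\eta(v)=I(v)-\tilde Cv^{(N+1)/N}$ and a two-parameter secant estimate between $v$ and $(1+\delta)v$; your route avoids that detour entirely. Your handling of the viscosity argument and of the non-differentiability points in item~(2) is also correct: semi-concavity of $I$ forces $I'_-\ge I'_+$, so a smooth lower touching function can exist only at differentiability points, where its derivative must agree with $I'$.

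There is, however, a genuine gap in item~(2): you assert that the threshold can be taken as $\varepsilon=\varepsilon(K,N,v_0,\alpha)$, but your argument only produces an $\varepsilon$ depending on the particular space $X$. Item~(1) tells you that $I'_+(v)/v^{-1/N}\to\tfrac{N-1}{N}\vartheta$, but neither $\vartheta$ nor the \emph{rate} of this convergence is controlled in terms of $K,N,v_0$ alone (right-continuity of $\eta'_+$ at $0$ is not quantitative). Hence you cannot conclude that $(I'_+(v))^2\ge -K(N-1)(\alpha-1)/(\alpha-N)$ on an interval whose length depends only on $K,N,v_0,\alpha$. The paper addresses exactly this point: after noting that item~(2) follows ``easily'' from item~(1) and \eqref{eqn:BayleEmpowered}, it devotes a separate argument (using the uniform isoperimetric inequality $I(v)\ge\theta(K,N,v_0)\,v^{(N-1)/N}$ of \autoref{rem:IsoperimetricSmallVolumes} and the upper bound of \autoref{rmk:ConseqMJ}, combined with the secant inequality for the concave $\tilde\eta$) to produce a lower bound $I'_+(v)/v^{-1/N}\ge c(K,N,v_0)$ valid for all $v\le V_2(K,N,v_0)$. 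That uniform lower bound is precisely what your proof is missing, and it is essential for the downstream applications (\autoref{cor:UniformPositivityProfile}, \autoref{cor:UniformLipschitzProfile}, \autoref{cor:UniformRegularityIsop}) where $\varepsilon$ must be independent of the space.
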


\begin{proof}
{The first item follows from elementary computations { exploiting that $I$ is concave for small volumes by \autoref{prop:SharpenedConcavityAndLimit}}.
The second claim now easily follows from the first one, employing \eqref{eqn:BayleEmpowered}, {\autoref{lem:ConsequenceSmallDiameter} and \autoref{rmk:ConseqMJ}}. 
}
\end{proof}

\begin{remark}\label{rem:AsintoticaProfileSmallVolumes}
    Under the same assumptions of \autoref{cor:EstimateDerivativeProfileAndSubadditivity}, we prove in \cite{AntonelliPasqualettoPozzettaSemolaASYMPTOTIC} that the isoperimetric profile satisfies the asymptotic behavior for small volume given by
    \begin{equation}\label{eq:AsintoticaProfileSmallVolumes}
    \lim_{v\to 0}\frac{I_X(v)}{v^{\frac{N-1}{N}}} = N\left(\omega_N\vartheta_{\infty,\mathrm{min}}\right)^{\frac 1N}\, ,
    \end{equation}
    where, being $v(N,K/(N-1),r)$ the volume of the ball of radius $r$ in the simply connected model space with constant sectional curvature $K/(N-1)$ and dimension $N$, we have that 
    \begin{equation*}    
    \vartheta_{\infty,\mathrm{min}}\eqdef \lim_{r\to 0}\inf_{x\in X}\frac{\haus^N(B_r(x))}{v(N,K/(N-1),r)}\, >\, 0\, 
    \end{equation*}    
    is the minimum of all the possible densities at any point in $X$ or in any pmGH limit at infinity of $X$. The limit in \eqref{eq:AsintoticaProfileSmallVolumes} yields an answer to Questions 4 in \cite{NardulliOsorio20}.\\
    It follows that the limit $\vartheta$ in \autoref{cor:EstimateDerivativeProfileAndSubadditivity} is now known to be equal to $N(\omega_N \vartheta_{\infty,\min})^{\frac1N}$. Hence
    \begin{equation*}
    \lim_{v\to0} \frac{I'_+(v)}{v^{-\frac1N}} = (N-1) (\omega_N \vartheta_{\infty,\min})^{\frac1N}\, .
    \end{equation*}
\end{remark}


{ The following lower bound appears to be classical, and it could be stated and proved in the class of locally doubling metric measure spaces satisfying a Poincaré inequality, and a uniform noncollapsing assumption on the volumes of unit balls. We refer to \cite[Theorem V.2.6]{ChavelIsoperimetricBook01}, which is stated for smooth Riemannian manifolds with bounded geometry, but whose proof adapts to the latter setting. Since we only need the statement in the setting of $N$-dimensional $\RCD(K,N)$ spaces, we state it in this setting. We stress that an alternative proof of \autoref{cor:UniformPositivityProfile} using the results of this paper can be given arguing by contradiction, by exploiting \autoref{thm:MassDecompositionINTRO} and \autoref{cor:EstimateDerivativeProfileAndSubadditivity}.
%
%
\begin{corollary}\label{cor:UniformPositivityProfile}
Let $0<V_1<V_2<V_3$ and let $K \in \R, N \in \N_{\ge 2}, v_0>0$. Then there exists $\mathscr{I}=\mathscr{I}(K,N,v_0,V_1,V_2, V_3)>0$ such that the following holds. If $(X,\dist,\haus^N)$ is an $\RCD(K,N)$ space with $\inf_{x\in X}\haus^N(B_1(x))\geq v_0>0$ and $\haus^N(X)\ge V_3$, then
\begin{equation}
    I_X(v) \ge \mathscr{I} \qquad\forall\,v \in [V_1,V_2]\, .
\end{equation}
\end{corollary}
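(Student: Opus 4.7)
The plan is to argue by contradiction using the asymptotic mass decomposition \autoref{thm:MassDecompositionINTRO} together with the stability of the \(\RCD(K,N)\) condition under pointed measured Gromov--Hausdorff convergence and the uniform small-volume isoperimetric inequality \autoref{rem:IsoperimetricSmallVolumes}. Assume for contradiction that there exist parameters \(K,N,v_0,V_1,V_2,V_3\) as in the statement, a sequence \((X_n,\dist_n,\haus^N)\) of \(\RCD(K,N)\) spaces with \(\haus^N(B_1(x))\ge v_0\) for every \(x\in X_n\) and \(\haus^N(X_n)\ge V_3\), and \(v_n\in[V_1,V_2]\) with \(I_{X_n}(v_n)\to 0\). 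Applying \autoref{thm:MassDecompositionINTRO} for each \(n\) yields \(I_{X_n}(v_n)=\Per(\Omega_n)+\sum_j\Per(Z_{n,j})\), where the \(\Omega_n,Z_{n,j}\) are isoperimetric regions in \(\RCD(K,N)\) spaces (namely \(X_n\) or its pmGH-limits at infinity), all with unit-ball volume at least \(v_0\), and with volumes summing to \(v_n\).

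Next, I would split the pieces into \emph{small} ones of volume at most the threshold \(v_1=v_1(K,N,v_0)\) from \autoref{rem:IsoperimetricSmallVolumes} and \emph{big} ones of volume strictly larger than \(v_1\). By \autoref{rem:IsoperimetricSmallVolumes} applied to each small piece, and using the subadditivity of \(t\mapsto t^{(N-1)/N}\), the collective contribution of the small pieces to \(I_{X_n}(v_n)\) is bounded below by \(\vartheta(K,N,v_0)(v_n^{\mathrm{small}})^{(N-1)/N}\), forcing \(v_n^{\mathrm{small}}\to 0\). Hence the total volume of big pieces converges to some \(v^*\in[V_1,V_2]\), and their number is at most \(V_2/v_1+1\), which stabilizes along a subsequence. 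Isolating one big piece gives a sequence of isoperimetric regions \(E_n\subset Y_n\), where each \(Y_n\) is \(\RCD(K,N)\) with unit-ball volume at least \(v_0\), \(\haus^N(E_n)\ge v_1\), and \(\Per(E_n)\to 0\).

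Finally, fix \(p_n\in\partial^e E_n\) and extract, by Gromov's precompactness and the \(L^1_{\mathrm{loc}}\)-compactness for sets of finite perimeter in pmGH-converging spaces recalled in \autoref{def:L1strong}, a subsequential limit \((Y_n,p_n)\to(Y_\infty,p_\infty)\) with \(E_n\to E_\infty\subset Y_\infty\). By stability, \(Y_\infty\) is \(\RCD(K,N)\), hence a length space and in particular connected; by lower semicontinuity of the perimeter under \(L^1_{\mathrm{loc}}\)-convergence across the varying ambients, \(\Per(E_\infty)=0\). Combining uniform density estimates for isoperimetric regions at reduced boundary points — which stem from the quasi-minimality guaranteed by \autoref{thm:RegularityIsoperimetricSets} together with \autoref{rem:IsoperimetricSmallVolumes} applied to intersections with small balls — both \(\haus^N(E_\infty\cap B_r(p_\infty))\) and \(\haus^N(B_r(p_\infty)\setminus E_\infty)\) are strictly positive for some small \(r\). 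Thus \(\chi_{E_\infty}\) is a non-trivial \(BV\) function with zero total variation on the connected space \(Y_\infty\), which is impossible.

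The main obstacle is to justify the uniform density estimates at the points \(p_n\), so that positivity survives in the limit \(E_\infty\); without them, \(\haus^N(E_\infty)\) might vanish because of concentration or of mass escaping to infinity, and the contradiction could not be drawn. This is overcome by invoking the uniform quasi-minimality of isoperimetric sets encoded in \autoref{thm:RegularityIsoperimetricSets} and the small-volume isoperimetric inequality \autoref{rem:IsoperimetricSmallVolumes}, which together provide \(\haus^N(E_n\cap B_r(p_n))\ge c\, r^N\) uniformly for small \(r\), with \(c,r\) depending only on \(K,N,v_0\). The hypothesis \(\haus^N(X_n)\ge V_3>V_2\) enters to guarantee that \(I_{X_n}(v_n)\) is a well-defined, non-degenerate quantity along the sequence, and it controls the dependence of \(\mathscr{I}\) on \(V_3\) via the room available for the complement.
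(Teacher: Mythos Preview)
Your argument is correct and takes a genuinely different route from the paper's own proof. Both proofs begin with the same contradiction set-up and the mass decomposition \autoref{thm:MassDecompositionINTRO}, but they diverge when it comes to singling out a piece whose perimeter tends to zero while its volume stays bounded away from zero. The paper invokes the strict subadditivity of the profile from \autoref{cor:EstimateDerivativeProfileAndSubadditivity} (hence ultimately the second-order differential inequality of \autoref{thm:BavardPansu}) to force at least one piece of volume $\ge\eps/2$, and then appeals to the external Generalized Compactness theorem from \cite{AntonelliNardulliPozzetta} to produce the limiting contradiction. Your route is more elementary: you only use the uniform small-volume isoperimetric inequality \autoref{rem:IsoperimetricSmallVolumes} and the concavity of $t\mapsto t^{(N-1)/N}$ to kill the small pieces, and then you run a direct pmGH limit on one surviving big piece, reaching the contradiction via uniform density estimates at a boundary point. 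In particular your proof does not need \autoref{thm:BavardPansu} at all, which is a pleasant feature since \autoref{cor:UniformPositivityProfile} is logically placed \emph{after} it but does not have to depend on it.

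One point deserves more care in the write-up. The uniform density estimate $\haus^N(E_n\cap B_r(p_n))\ge c\,r^N$ with $c,r$ depending only on $K,N,v_0$ (and a lower bound on $\haus^N(E_n)$, here $v_1$) does not follow from \autoref{thm:RegularityIsoperimetricSets} as stated in this paper; that statement only asserts Ahlfors regularity for a fixed space, with no explicit uniformity of constants. You should instead cite the deformation lemma and the quasi-minimality statement in \cite[Theorem~1.1, Theorem~3.24, Remark~3.25]{AntonelliPasqualettoPozzetta21}, whose constants depend only on $K,N,v_0$; this gives uniform $(\Lambda,R)$-minimality and hence uniform two-sided density bounds, without any reference to the Lipschitz profile in \autoref{cor:UniformLipschitzProfile} (which would be circular, since that corollary relies on the very statement you are proving). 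With this adjustment your argument is complete.
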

}

\begin{corollary}\label{cor:UniformLipschitzProfile}
Let $0<V_1<V_2$ and let $K \le 0, N \in \N_{\ge 2}, v_0>0$. Then there exist $\mathscr{C},\mathscr{L}>0$ depending on $K,N,v_0,V_1,V_2$ such that the following holds. If $(X,\dist,\haus^N)$ is $\RCD(K,N)$ with $\inf_{x\in X}\haus^N(B_1(x))\geq v_0>0$ and $\haus^N(X)\ge V_2$, then
\begin{equation}
    \begin{split}
        v\mapsto I^{\frac{N}{N-1}}(v) - \mathscr{C}v^{\frac{2+N}{N}} &\quad\text{is concave on $[0,V_1]$}\, ,\\
       v\mapsto  I^{\frac{N}{N-1}}(v) &\quad\text{is $\mathscr{L}$-Lipschitz on $[0,V_1]$}\, .
    \end{split}
\end{equation}
In particular, for any $V \in (0,V_1)$ there is $L>0$ depending on $K,N,v_0,V_1,V_2, V$ such that
\begin{equation}
      v\mapsto  \text{$I(v)$ is $L$-Lipschitz on $[V,V_1]$}\, .
\end{equation}
\end{corollary}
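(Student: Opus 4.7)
The plan is to extend the small-volume concavity of \autoref{prop:SharpenedConcavityAndLimit} to the whole range $[0,V_1]$ via the uniform positivity from \autoref{cor:UniformPositivityProfile}, and then to harvest a Lipschitz bound from the resulting concavity established on a strictly larger interval. First I would fix an auxiliary threshold $\tilde V_1\in(V_1,V_2)$. Combining the small-volume isoperimetric inequality $I\ge\vartheta v^{(N-1)/N}$ on $[0,v_1]$ from \autoref{rem:IsoperimetricSmallVolumes} with $I\ge\mathscr{I}>0$ on $[v_1,\tilde V_1]$ obtained by applying \autoref{cor:UniformPositivityProfile} with endpoints $v_1/2,\tilde V_1,V_2$ (the case $v_1\ge\tilde V_1$ being already covered by the first inequality), I obtain a constant $c_1=c_1(K,N,v_0,V_1,V_2)>0$ such that
\[
I(v)\ge c_1\,v^{(N-1)/N}\, ,\qquad\text{equivalently,}\qquad \psi(v)\ge c_1^{N/(N-1)}\,v\, ,\qquad v\in[0,\tilde V_1]\, ,
\]
where $\psi:=I^{N/(N-1)}$.

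Next I would plug this lower bound into the differential inequality $-\psi''\ge\frac{KN}{N-1}\psi^{(2-N)/N}$ from \autoref{thm:BavardPansu}(2). Since $K\le 0$ and $(2-N)/N\le 0$ (so larger $\psi$ makes $\psi^{(2-N)/N}$ smaller), this yields, in the viscosity sense on $(0,\tilde V_1)$,
\[
\psi''(v)\le -\frac{KN}{N-1}\,c_1^{(2-N)/(N-1)}\,v^{(2-N)/N}\, .
\]
Setting $\mathscr{C}:=\tfrac{-KN^3}{2(N-1)(N+2)}\,c_1^{(2-N)/(N-1)}$ for $N\ge 3$ (and $\mathscr{C}:=-K/2$ for $N=2$), a direct computation turns this into $(\psi-\mathscr{C}v^{(2+N)/N})''\le 0$ in the viscosity sense on $(0,\tilde V_1)$. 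By continuity of $\psi$ from \autoref{lem:LocalHolderProfile} this upgrades to concavity of $\tilde\psi:=\psi-\mathscr{C}v^{(2+N)/N}$ on the closed interval $[0,\tilde V_1]\supseteq[0,V_1]$, which proves the first claim.

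For the Lipschitz bound on $\psi$ over $[0,V_1]$ the crucial point is that I have concavity on the \emph{larger} interval $[0,\tilde V_1]$, so that $V_1$ is an interior point. Monotonicity of $\tilde\psi'$ then reduces the Lipschitz estimate to bounding $\tilde\psi'$ at the two endpoints of $[0,V_1]$. At $0$, the identity $\psi'_+=\tfrac{N}{N-1}I^{1/(N-1)}I'_+$ together with the asymptotic $I'_+(v)/v^{-1/N}\to (N-1)\vartheta/N$ from \autoref{cor:EstimateDerivativeProfileAndSubadditivity}(1) and the uniform upper bound $\vartheta\le C(K,N,v_0)$ from \autoref{rmk:ConseqMJ} yields a uniform bound $\tilde\psi'_+(0)=\psi'_+(0)\le M_0(K,N,v_0)$ (the cancellation $v^{1/N}\cdot v^{-1/N}$ is precisely what makes $\psi'_+(0)$ finite although $I'_+(0)=+\infty$). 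At $V_1$, concavity gives $\tilde\psi'_-(V_1)\ge(\tilde\psi(\tilde V_1)-\tilde\psi(V_1))/(\tilde V_1-V_1)$, which is bounded in absolute value by a uniform upper bound on $I$ over $[0,\tilde V_1]$ obtained by combining \autoref{rmk:ConseqMJ} with the subadditivity from \autoref{cor:EstimateDerivativeProfileAndSubadditivity}(2). Adding back the smooth, uniformly Lipschitz perturbation $\mathscr{C}v^{(2+N)/N}$ delivers the constant $\mathscr{L}=\mathscr{L}(K,N,v_0,V_1,V_2)$.

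The last claim follows immediately: for $V\in(0,V_1)$, \autoref{cor:UniformPositivityProfile} provides $I\ge\mathscr{I}>0$ on $[V,V_1]$, so $\psi\ge\mathscr{I}^{N/(N-1)}$ is bounded away from zero there, and the smooth (hence Lipschitz) map $t\mapsto t^{(N-1)/N}$ on any interval bounded away from zero transfers the Lipschitz bound of $\psi$ to $I=\psi^{(N-1)/N}$. The principal obstacle in the whole argument is the control of $\tilde\psi'_-$ at the right endpoint $V_1$: concavity on $[0,V_1]$ alone is not enough, and the workaround of passing to the strictly larger interval $[0,\tilde V_1]\subset[0,V_2)$ is precisely where the hypothesis $\haus^N(X)\ge V_2>V_1$ enters.
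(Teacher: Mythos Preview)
Your argument is correct and follows essentially the same path as the paper's proof: establish concavity of $\psi-\mathscr{C}v^{(2+N)/N}$ on a strictly larger interval $[0,\tilde V_1]$ (the paper uses $\tilde V_1=(V_1+V_2)/2$) by feeding a uniform lower bound on $I$ into the Bayle inequality \eqref{eqn:Bayle}, and then extract the Lipschitz bound by controlling $\tilde\psi'$ at the two endpoints. Your version is in fact slightly more streamlined than the paper's, which splits the concavity argument into two pieces $[0,v_1]$ and $[v_1,(V_1+V_2)/2]$ with different correction terms and then patches them together; you instead first manufacture a single inequality $I(v)\ge c_1 v^{(N-1)/N}$ on the whole interval (by combining \autoref{rem:IsoperimetricSmallVolumes} with \autoref{cor:UniformPositivityProfile}) and run the viscosity computation once.

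Two small points to fix. First, your parenthetical constant for $N=2$ is off: plugging $N=2$ into $-\psi''\ge 2K$ forces $\mathscr{C}\ge -K$, not $-K/2$ (indeed your general formula already gives $-K$ at $N=2$, so the case distinction is unnecessary). Second, your justification of the uniform upper bound on $I$ over $[0,\tilde V_1]$ via ``subadditivity from \autoref{cor:EstimateDerivativeProfileAndSubadditivity}(2)'' does not quite work as stated, since that subadditivity only holds on a small interval $(0,\eps)$. The simplest fix is to note that no separate upper bound on $I$ is needed: concavity of $\tilde\psi$ on $[0,\tilde V_1]$ together with $\tilde\psi(0)=0$ and your bound $\tilde\psi'_+(0)\le M_0$ already give $\tilde\psi(V_1)\le M_0 V_1$, while $\tilde\psi(\tilde V_1)\ge -\mathscr{C}\tilde V_1^{(2+N)/N}$ trivially; this bounds the secant. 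Alternatively, the upper bound on $I$ follows directly from comparison with a single metric ball of the right volume (whose radius is controlled via the Bishop--Gromov lower volume bound coming from $v_0$).
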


\begin{proof}
{
We know from \autoref{prop:SharpenedConcavityAndLimit} that there exist $v_1,C>0$ depending on $K,N,v_0$ such that $I^{\frac{N}{N-1}}(v) - Cv^{\frac{2+N}{N}}$ is concave on $[0,v_1]$.\\ 
By \eqref{eqn:Bayle} and \autoref{cor:UniformPositivityProfile}, easy computations give that $I^{\frac{N}{N-1}}(v) - \mathscr{C} v^{\frac{2+N}{N}}$ is concave on $[0,V_1]$, for a constant $\mathscr{C}=\mathscr{C}(K,N,v_0,V_1,V_2)>0$.

Let us denote $f(v)\eqdef I^{\frac{N}{N-1}}(v) - \mathscr{C} v^{\frac{2+N}{N}} $.
In order to show the Lipschitzianity of $I^{\frac{N}{N-1}}$, it is enough to observe that by concavity, 
\autoref{rmk:ConseqMJ},
\autoref{prop:SharpenedConcavityAndLimit},
\autoref{cor:EstimateDerivativeProfileAndSubadditivity}, and \autoref{cor:UniformPositivityProfile}, we have that $f'_+$ is bounded above on $[0,V_1]$, and $f$ is bounded below on $[V_1/2,(V_1+V_2)/2]$. Hence, since $f$ is concave, we get that $f'_+$ is uniformly bounded above and below on $[0,V_1]$ by constants only depending on $K,N,v_0,V_1,V_2$.\\ 
As a direct consequence $I^{\frac{N}{N-1}}$ is $\mathscr{L}$-Lipschitz on $[0,V_1]$ for some constant $\mathscr{L}$ depending on $K,N,v_0,V_1,V_2$.
%

}
\end{proof}

The above uniform bounds on the isoperimetric profile allow to derive the following uniform regularity properties on isoperimetric regions, that is, we prove that isoperimetric sets satisfy almost-minimality properties and density estimates with constants \emph{independent} of the specific ambient space. This is new even in the case of isoperimetric sets in smooth Riemannian manifolds.

\begin{corollary}\label{cor:UniformRegularityIsop}
Let $0<V_1<V_2<V_3$ and let $K \in \R, N \in \N_{\ge 2}, v_0>0$. Then there exist $\Lambda, R>0$ depending on $K,N,v_0,V_1,V_2, V_3$ such that the following holds.

If $(X,\dist,\haus^N)$ is an $\RCD(K,N)$ space with $\inf_{x\in X}\haus^N(B_1(x))\geq v_0>0$, $\haus^N(X)\ge V_3$, and $E\subset X$ is an isoperimetric region with $\haus^N(E) \in [V_1,V_2]$, then $E$ is a $(\Lambda,R)$-minimizer, i.e., for any $F\subset X$ such that $F \Delta E \subset B_R(x)$ for some $x \in X$, then
\[
\Per(E) \le \Per(F) + \Lambda\haus^N(F\Delta E)\, .
\]
In particular there exist $R'>0$, $C_1 \in (0,1)$ and $C_2,C_3>0$ depending on $K,N,v_0,V_1,V_2, V_3$ such that
\begin{equation}\label{eqn:OmegaMinimo}
\Per(E,B_r(x))\leq (1+C_3r)\Per(F,B_r(x)),
\end{equation}
for any $x\in X$, $r\in (0,R']$, and any $F$ such that $E\Delta F \Subset B_r(x)$. Moreover
\[
C_1 \le \frac{\haus^N(B_r(x) \cap E)}{\haus^N(B_r(x))}\le 1-C_1\, ,
\qquad
C_2^{-1} \le \frac{\Per(E,B_r(x))}{r^{N-1}} \le C_2\, ,
\]
for any $x \in \partial E$ and any $r\in(0,R']$.

\end{corollary}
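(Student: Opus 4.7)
The plan is to convert the global isoperimetric property of $E$ into a local quasiminimality condition via the uniform Lipschitz regularity of the isoperimetric profile $I$. Since an $\RCD(K,N)$ space is a fortiori $\RCD(\min\{K,0\},N)$, \autoref{cor:UniformLipschitzProfile} applied (if necessary) to the downgraded structure produces a constant $L=L(K,N,v_0,V_1,V_2,V_3)$ such that $I$ is $L$-Lipschitz on $[V_1/2,2V_2]$ (we may harmlessly assume $V_3\ge 2V_2$, adjusting constants).

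First I would prove the $(\Lambda,R)$-minimality. By Bishop--Gromov I choose $R=R(K,N,V_1)>0$ so that $\haus^N(B_R(x))\le V_1/2$ for every $x\in X$. For any competitor $F\subset X$ with $F\Delta E\subset B_R(x)$, the volume $\haus^N(F)$ lies in $[V_1/2,2V_2]$, hence, using that $E$ is isoperimetric and $F$ is a competitor for its own volume,
\[
\Per(E)=I(\haus^N(E))\le I(\haus^N(F))+L|\haus^N(F)-\haus^N(E)|\le \Per(F)+L\haus^N(F\Delta E),
\]
giving $(\Lambda,R)$-minimality with $\Lambda=L$.

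The quasiminimality \eqref{eqn:OmegaMinimo} then follows from $(\Lambda,R)$-minimality by a standard absorption. Given $F$ with $E\Delta F\Subset B_r(x)$ and $r\le R$, additivity of the perimeter on disjoint sets yields $\Per(E,B_r(x))\le \Per(F,B_r(x))+L\haus^N(E\Delta F)$. To turn the volume error into a perimeter error of order $r$, I estimate $\haus^N(E\Delta F)\le \haus^N(B_r(x))^{1/N}\haus^N(E\Delta F)^{(N-1)/N}\le C_K r\,\haus^N(E\Delta F)^{(N-1)/N}$ via Bishop--Gromov and then apply the isoperimetric inequality for small volumes (\autoref{rem:IsoperimetricSmallVolumes}) to the set $E\Delta F\Subset B_r(x)$, whose perimeter is at most $\Per(E,B_r(x))+\Per(F,B_r(x))$. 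This yields $L\haus^N(E\Delta F)\le Cr[\Per(E,B_r(x))+\Per(F,B_r(x))]$ for $r$ below a threshold depending only on the listed parameters; moving the term in $\Per(E,B_r(x))$ to the left-hand side and rearranging produces $\Per(E,B_r(x))\le(1+C_3 r)\Per(F,B_r(x))$ on some $(0,R']$.

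Finally, the density and Ahlfors regularity estimates follow from \eqref{eqn:OmegaMinimo} by the classical iteration argument for quasi-minimizers. Letting $g(r)=\haus^N(E\cap B_r(x))$ for $x\in\partial E$, the comparison of $E$ with $E\setminus\overline{B_r(x)}$ via \eqref{eqn:OmegaMinimo} together with the coarea formula yields $\Per(E,B_r(x))\le 2g'(r)$ for a.e.\ $r\le R'$; applying \autoref{rem:IsoperimetricSmallVolumes} to $E\cap B_r(x)$, whose perimeter is bounded by $\Per(E,B_r(x))+g'(r)$, gives the differential inequality $\vartheta g^{(N-1)/N}\le 3g'$, which integrates to $g(r)\ge c_1 r^N$ and hence to the lower density bound for $E$. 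The symmetric comparison with $E\cup B_r(x)$ provides the matching density bound for $E^c$, and the two-sided estimate $C_2^{-1}r^{N-1}\le \Per(E,B_r(x))\le C_2 r^{N-1}$ follows by combining the density estimates with \autoref{rem:IsoperimetricSmallVolumes} applied to both $E\cap B_r(x)$ and $E^c\cap B_r(x)$ and with \eqref{eqn:OmegaMinimo}. The main delicate point throughout is ensuring that every constant depends only on $(K,N,v_0,V_1,V_2,V_3)$, which is automatic since \autoref{cor:UniformLipschitzProfile}, \autoref{rem:IsoperimetricSmallVolumes}, and Bishop--Gromov all produce constants with precisely that dependence, and no step of the argument exploits a property of $E$ beyond $\haus^N(E)\in[V_1,V_2]$.
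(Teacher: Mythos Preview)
Your proof is correct and follows the same route as the paper: uniform Lipschitz regularity of $I$ gives $(\Lambda,R)$-minimality, which then yields quasiminimality and density estimates. The paper obtains \eqref{eqn:OmegaMinimo} and the density bounds by citing \cite{AntonelliPasqualettoPozzetta21}, whereas you spell out the absorption and the iteration explicitly; the underlying computations coincide.

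One minor slip: you cannot ``harmlessly assume $V_3\ge 2V_2$'', since that would strengthen the hypothesis $\haus^N(X)\ge V_3$ and hence prove less than claimed. The paper handles this by choosing $R$ so that $\haus^N(B_R(x))\le\min\{V_1/2,(V_3-V_2)/2\}$, which forces $\haus^N(F)\in[V_1/2,(V_2+V_3)/2]\subset(0,\haus^N(X))$ and allows the Lipschitz bound on $I$ to be taken on that interval. With this adjustment your argument goes through unchanged.
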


\begin{proof}
Let $R>0$ be a radius such that $\haus^N(B_R(x)) \le \min\{V_1/2,(V_3-V_2)/2 \}$ for any $x \in X$. Let also $L>0$ be the Lipschitz constant of $I$ on $[V_1/2, (V_2+V_3)/2]$ given by \autoref{cor:UniformLipschitzProfile}. Then for any $F\subset X$ with $F\Delta E\subset B_R(x)$ it holds
\[
\Per(F) \ge I(\haus^N(F)) \ge I(\haus^N(E)) - L|\haus^N(E)-\haus^N(F)| \ge \Per(E) - L\haus^N(F\Delta E).
\]

For the second part of the claim let us exploit \cite[Equation (3.51) in Remark 3.25]{AntonelliPasqualettoPozzetta21}. According to the latter, one can find $R'>0$ possibly smaller than $R$ and only depending on $K,N,v_0,V_1,V_2,V_3$ such that, calling $v(N,K/(N-1),r)$ the volume of the geodesic ball of radius $r$ in the model of constant sectional curvature $K/(N-1)$ and dimension $N$, one has, for some constant $\widetilde C_1$ only depending on $N,K,v_0$, that
\[
\Per(E,B_r(x))\leq \frac{1+\Lambda\widetilde C_1 v(N,K/(N-1),r)^{1/N}}{1-\Lambda\widetilde C_1 v(N,K/(N-1),r)^{1/N}}\Per(F,B_r(x)),
\]
for any $x\in X$, $r\in(0,R']$, and any $F$ such that $E\Delta F \Subset B_r(x)$. Hence, by taking $R'$ smaller if needed, and by using that there exists $\widetilde C_2$ only depending on $N,K$ such that $v(N,K/(N-1),r)\leq \widetilde C_2r^N$ for every $r\leq 1$, we get the conclusion in \eqref{eqn:OmegaMinimo}. 

The last part of the claim follows arguing as in the proof of \cite[Proposition 3.27]{AntonelliPasqualettoPozzetta21}.
\end{proof}

\subsection{Consequences}

From the previous results on the concavity of the isoperimetric profile one can prove that in the $N$-dimensional $\RCD(K,N)$ spaces, isoperimetric regions of sufficiently small volume are connected. If $K=0$, the conclusion holds for all volumes.
\begin{corollary}\label{cor:GHRnRegioniConnesse}
Let $(X,\dist,\haus^N)$ be an $\RCD(K,N)$ space with $N\geq 2$. Let us assume that $\inf_{x\in X}\haus^N(B_1(x))\geq  v_0>0$. Let $\varepsilon$ be such that the isoperimetric profile $I$ is strictly subadditive on $(0,\varepsilon)$. Such an $\varepsilon>0$ exists thanks to item (2) of \autoref{cor:EstimateDerivativeProfileAndSubadditivity}, and, if $K=0$, one can take $\varepsilon=\haus^N(X)$.\\
{ Let $E=E^{(1)}$ be an isoperimetric region in $X$ with $\haus^N(E)< \varepsilon$.} Then $E$ is connected. If in addition $\haus^N$ is finite, then $E$ is simple (i.e.\ $E$ and $X\setminus E$ are indecomposable) and $E^{(0)}$ is connected. 
\end{corollary}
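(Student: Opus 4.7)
The plan is to argue by contradiction using strict subadditivity of $I$ on $(0,\varepsilon)$. First, the assertion $\varepsilon=\haus^N(X)$ for $K=0$ follows from \autoref{thm:BavardPansu}: concavity of $\psi\coloneqq I^{N/(N-1)}$ with $\psi(0)=0$ gives subadditivity of $\psi$, and the strict inequality $(a+b)^{(N-1)/N} < a^{(N-1)/N}+b^{(N-1)/N}$ for $a,b>0$ promotes this to strict subadditivity of $I$ itself.

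Let $E$ be isoperimetric with $v\coloneqq\haus^N(E)<\varepsilon$, identified via \autoref{thm:RegularityIsoperimetricSets} with its open bounded representative $E^{(1)}$, and suppose for contradiction that $E$ is disconnected. Split $E = E_1 \sqcup E_2$ by grouping its topological components into two non-empty blocks; both $E_j$ are open with $\overline{E_j}\cap E_{3-j}=\emptyset$, and $v_j\coloneqq\haus^N(E_j)>0$ satisfy $v_1+v_2=v<\varepsilon$.

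The heart of the argument is the perimeter identity $\Per(E)=\Per(E_1)+\Per(E_2)$. The bound $\le$ is immediate from $\chi_E=\chi_{E_1}+\chi_{E_2}$ and subadditivity of the total variation. The reverse inequality rests on showing that the wall $W\coloneqq\overline{E_1}\cap\overline{E_2}$ is $\haus^{N-1}$-negligible: at an $\haus^{N-1}$-density point of $W$ the two components would accumulate on opposite sides of a tangent hyperplane in the blow-up, their union filling the ambient space up to a set of $\haus^N$-measure zero; this would force the density of $E$ at that point to equal $1$, placing the point in $E^{(1)}=E$ and contradicting the topological separation of the components. Once $W$ is $\haus^{N-1}$-null, the blow-up description of the reduced boundary shows that $\mathcal F E_1$ and $\mathcal F E_2$ are essentially disjoint and together cover $\mathcal F E$, whence $\Per(E_1)+\Per(E_2)=\haus^{N-1}(\mathcal F E)=\Per(E)$ via \eqref{eq:RepresentationPerimeter}.

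With additivity, strict subadditivity of $I$ gives
\[
\Per(E)=\Per(E_1)+\Per(E_2) \ge I(v_1)+I(v_2) > I(v_1+v_2) = I(v) = \Per(E),
\]
the desired contradiction. The same chain, applied to any nontrivial decomposition of $E$ into sets of finite perimeter with additive perimeters, actually yields indecomposability of $E$. For the claims under $\haus^N(X)<\infty$: the complement $X\setminus E$ is itself an isoperimetric region of volume $\haus^N(X)-v$ via the symmetry $I_X(u)=I_X(\haus^N(X)-u)$, and $E^{(0)}$ is its open bounded representative by \autoref{thm:RegularityIsoperimetricSets}. When $K=0$ and $\varepsilon=\haus^N(X)$, the complementary volume also lies in $(0,\varepsilon)$, so the same argument applied to $X\setminus E$ yields indecomposability and the connectedness of $E^{(0)}$, proving that $E$ is simple. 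The principal technical obstacle is the $\haus^{N-1}$-negligibility of the wall $W$, which requires combining the topological structure of $E^{(1)}$ with the rectifiability and blow-up theory for boundaries of finite-perimeter sets in the $\RCD$ framework.
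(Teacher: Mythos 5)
Your overall strategy is sound and your contradiction chain is the right one, but you take a genuinely different route from the paper for the key step, and there are two gaps in it. The paper first proves \emph{indecomposability}: since perimeter additivity is built into the very definition of a decomposition of a finite-perimeter set, strict subadditivity of $I$ on $(0,\varepsilon)$ gives the contradiction immediately, with no geometric work. Topological connectedness is then \emph{deduced} from indecomposability via the constancy theorem for BV functions on indecomposable sets of \cite[Theorem 2.5]{BonicattoPasqualettoRajala20} (applied to $\chi_U$, using the two-sidedness property of $\RCD$ spaces). You instead prove perimeter additivity directly for a topological splitting $E=E_1\sqcup E_2$ by a blow-up/density argument and then invoke subadditivity. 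This is a legitimate and more self-contained alternative, but note two issues. First, both routes need the open pieces $E_1,E_2$ to have finite perimeter (otherwise $\Per(E_j)\ge I(v_j)$ and the whole reduced-boundary machinery are vacuous); the paper gets this from $\partial E_j\subset\partial E$, the $(N-1)$-Ahlfors regularity of $\partial E$ in \autoref{thm:RegularityIsoperimetricSets}, and Lahti's criterion that finite $\haus^{N-1}$-measure of the topological boundary implies finite perimeter. You do not address this, and it is not automatic.

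Second, the set you must show is $\haus^{N-1}$-negligible is $\mathcal F E_1\cap\mathcal F E_2$ (equivalently the overlap of the essential boundaries), not the topological wall $W=\overline{E_1}\cap\overline{E_2}$: the latter can genuinely carry positive $\haus^{N-1}$-measure for separated open sets (one piece may accumulate on a hypersurface with vanishing density), so the claim as you state it is false in general. Your mechanism — a point where both pieces have density $1/2$ has $E$-density $1$, hence lies in the open set $E=E^{(1)}$, hence in one open component, contradicting $\mathcal F E_j$-membership — is exactly the correct argument, but it must be applied to $\mathcal F E_1\cap\mathcal F E_2$, and one must also invoke the Federer-type decomposition $X=F^{(0)}\cup F^{(1)}\cup\mathcal F F$ up to $\haus^{N-1}$-null sets together with \eqref{eq:RepresentationPerimeter} to conclude $\Per(E_1)+\Per(E_2)\le\Per(E)$. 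Finally, for the last assertion your argument (like the paper's one-line reduction to the complement) only yields simplicity and connectedness of $E^{(0)}$ when the complementary volume also lies in $(0,\varepsilon)$ — you are explicit about restricting to $K=0$, which is in fact more careful than the paper's phrasing, but it means the statement as written is not fully covered for $K<0$.
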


\begin{proof}
{We recall that when we deal with an isoperimetric region $E$, we are always considering it is open by taking $E=E^{(1)}$.} From item (2) of \autoref{cor:EstimateDerivativeProfileAndSubadditivity} one gets that there exists $\varepsilon>0$ such that $I$ is strictly subadditive on $(0,\varepsilon)$. Notice that if $K=0$, we have that $I^{N/(N-1)}$ is concave as a consequence of item (2) of \autoref{thm:BavardPansu}, hence in this case $I$ is striclty subadditive on $(0,\haus^N(X))$.

Assume $\Omega$ is an isoperimetric region of volume $V<\varepsilon$. We prove first that $\Omega$ is indecomposable of volume $V$. Suppose by contradiction it is decomposable. Hence $\Omega=\Omega_1\cup\Omega_2$ with $\haus^N(\Omega_1)+\haus_N(\Omega_2)=V$ and $\Per(\Omega_1)+\Per(\Omega_2)=\Per(\Omega)=I(V)$. Hence 
$$
I(V)=\Per(\Omega_1)+\Per(\Omega_2)\geq I(\haus^N(\Omega_1))+I(\haus^N(\Omega_2))>I(V),
$$
where the last inequality is due to the fact that $I$ is strictly subadditive on $(0,\varepsilon)$. Hence we reach a contradiction.

To prove that $E$ is connected, we argue by contradiction: suppose there exist non-empty open sets $U,V\subseteq X$ such that $U\cap V=\emptyset$ and $U\cup V=E$.
Being $U$, $V$, $E$ open, we have that $\partial E=\partial U\cup\partial V$. Since we know that $\partial E=\partial^e E$ (recall that we are always assuming that
$E=E^{(1)}$), we deduce that $\haus^{N-1}(\partial U)<+\infty$ and accordingly $U$ is a set of finite perimeter, see e.g., \cite{Lahti20}.
Now consider the BV function $f\coloneqq\chi_U$. Again thanks to the fact that $U$ and $V$ are open, we get $|Df|(E)=0$. Given that $\RCD(K,N)$ spaces
have the two-sidedness property in the sense of \cite[Definition 1.28]{BonicattoPasqualettoRajala20}) (see \cite[Example 1.31]{BonicattoPasqualettoRajala20})
and $E$ is indecomposable, we deduce from \cite[Theorem 2.5]{BonicattoPasqualettoRajala20} that $f$ is $\haus^N$-a.e.\ constant on $E$, which leads to a
contradiction. Therefore, the set $E$ is connected.

Now assume that $\haus^N(X)<+\infty$. Then $X\setminus E$ is an isoperimetric set of its own volume, thus accordingly the last part of the statement follows
from the first one applied to $X\setminus E$.
\end{proof}


\begin{remark}
Classical examples show that, even for smooth compact Riemannian manifolds, connectedness of isoperimetric regions might fail for volumes bounded away from zero if the Ricci curvature is negative somewhere, see for instance \cite[Remark 2.3.12]{Bayle03}.
\end{remark}

From the strictly subadditivity of the isoperimetric profile (for small volumes, if $K<0$), we infer that there is at most one component in the asymptotic mass decomposition result in \autoref{thm:MassDecompositionINTRO} (for small volumes if $K<0$). This is understood in the following statement.

\begin{lemma}\label{lem:IsoperimetricAtFiniteOrInfinite}
Let $(X,\dist,\haus^N)$ be an $\RCD(K,N)$ space with $N\geq 2$. Let us assume that $\inf_{x\in X}\haus^N(B_1(x))\geq  v_0>0$. Let $\varepsilon$ be such that the isoperimetric profile $I$ is strictly subadditive on $(0,\varepsilon)$. Such an $\varepsilon>0$ exists thanks to item (2) of \autoref{cor:EstimateDerivativeProfileAndSubadditivity} and, if $K=0$, one can take $\varepsilon=\haus^N(X)$.
\begin{enumerate}
    \item Let $\{\Omega_i\}_{i\in\mathbb N}$ be a minimizing (for the perimeter) sequence of bounded finite perimeter sets of volume $v<\varepsilon$ in $X$. Then, if one applies \autoref{thm:MassDecompositionINTRO}, either $\overline N=0$, or $\overline N=1$ and $\haus^N(\Omega)=0$.
    
    \item Let $X_1,\ldots, X_{\overline{N}}$ be pmGH limits of $X$ along sequences of points $\{p_{i,j}\}_{i \in \setN}$, for $j=1,\ldots,\overline{N} \in \setN \cup\{+\infty\}$. Let $\Omega=E \cup \bigcup_{j=1}^{\overline{N}} E_j$, with $E\subset X, E_j \subset X_j$ be a set achieving the infimum in \eqref{eqn:GeneralizedIsoperimetricProfile} for some $v<\varepsilon$. Then exactly one component among $E,E_1,\ldots,E_{\overline{N}}$ is nonempty.
\end{enumerate}
In particular, for any $v<\varepsilon$ there is an $\RCD(K,N)$ space $(Y,\dist,\haus^N)$ which is either $X$ or a pmGH limit of $X$ along a sequence $\{p_i\}_i \subset X$, and a set $E \subset Y$ such that $\haus^N(E)=v$ and $I_X(v)=\Per(E)$.
\end{lemma}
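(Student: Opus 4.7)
The plan is to exploit the strict subadditivity of $I_X$ on $(0,\varepsilon)$, together with the pointwise inequality $I_X \leq I_{X_j}$ that follows from \autoref{prop:ProfileDecomposition} (restrict competitors for $I_{X \sqcup_j X_j}$ to live in a single $X_j$). Once both tools are in place, the two items and the final assertion are immediate.

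For item (1), apply \autoref{thm:MassDecompositionINTRO} to the minimizing sequence to obtain a bounded isoperimetric set $\Omega \subset X$ and isoperimetric regions $Z_j \subset X_j$ with
\[
v = \haus^N(\Omega) + \sum_{j=1}^{\overline N}\haus^N(Z_j),
\qquad
I_X(v) = \Per(\Omega) + \sum_{j=1}^{\overline N}\Per(Z_j).
\]
Since each piece is isoperimetric for its own volume in its own ambient space, $\Per(\Omega) = I_X(\haus^N(\Omega))$ and $\Per(Z_j) = I_{X_j}(\haus^N(Z_j)) \geq I_X(\haus^N(Z_j))$, whence
\[
I_X(v) \;\geq\; I_X(\haus^N(\Omega)) + \sum_{j=1}^{\overline N} I_X(\haus^N(Z_j)).
\]
All positive summand volumes lie in $(0,\varepsilon)$ and sum to $v$, so the strict subadditivity of $I_X$ (extended inductively from the two-term case) forces at most one of them to be positive. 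Since they sum to $v > 0$, exactly one is: this is precisely the stated dichotomy (either $\overline N = 0$ and all the mass sits in $\Omega$, or $\overline N = 1$ with $\haus^N(\Omega) = 0$).

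Item (2) is proved verbatim by the same computation applied to the components $E, E_j$ of a minimizer for $I_{X \sqcup_j X_j}(v) = I_X(v)$: each component is isoperimetric for its own volume in the relevant space, and the subadditivity argument rules out having two or more components of positive volume. For the final assertion, fix $v < \varepsilon$, take a bounded minimizing sequence for $I_X(v)$, and invoke item (1): either one takes $(Y, E) = (X, \Omega)$ in the case $\overline N = 0$, or $(Y, E) = (X_1, Z_1)$ in the case $\overline N = 1$ and $\haus^N(\Omega) = 0$, with $X_1$ the pmGH limit of $X$ along the relevant sequence of basepoints supplied by \autoref{thm:MassDecompositionINTRO}.

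The only subtlety is promoting the binary strict subadditivity $I_X(a+b) < I_X(a) + I_X(b)$ (for $a, b > 0$ with $a + b < \varepsilon$) to the many-term version used here; this follows immediately by induction after grouping two positive summands at a time and using $I_X(0) = 0$ to discard any zero contributions. Everything else is a direct read-off from the comparison $I_X \leq I_{X_j}$ and the structure of the asymptotic mass decomposition.
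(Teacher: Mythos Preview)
Your proof is correct and follows essentially the same approach as the paper's: both arguments combine the inequality $I_{X_j}\geq I_X$ (which you deduce from \autoref{prop:ProfileDecomposition}, while the paper cites \cite{AntonelliNardulliPozzetta} directly) with the strict subadditivity of $I_X$ on $(0,\varepsilon)$ to rule out having two or more nonempty pieces in the mass decomposition. The paper phrases item (1) as a proof by contradiction rather than your direct argument, but the content is identical.
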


\begin{proof}
Let us prove the first item, and to this aim we adopt the notation of \autoref{thm:MassDecompositionINTRO}. Let us assume that the assertion is not true. Hence there are $j\geq 2$ nonempty sets $E_1,\dots,E_j$ among $\Omega,Z_1,\dots,Z_{\overline{N}}$ such that, due to \eqref{eq:UguaglianzeIntro} and the fact that $E_1,\dots,E_j$ are isoperimetric in their own spaces,
$$
I(v)=\sum_{k=1}^j I_{X_k}(\haus^N(E_k)),
$$
and $\sum_{k=1}^j \haus^N(E_k)=v$. Using that, for every $v>0$ and every $k\in \{1,\dots,j\}$, we have $I_{X_k}(v)\geq I(v)$, see \cite{AntonelliNardulliPozzetta}, we thus conclude that 
$$
I(v)\geq \sum_{k=1}^j I(\haus^N(E_k)),
$$
which is in contradiction with the fact that $I$ is strictly subadditive on $(0,v)\subset(0,\varepsilon)$, and $j\geq 2$. 

The second item analogously follows from the strict subadditivity of the profile $I$ of $X$ and the identity in \autoref{prop:ProfileDecomposition}.
\end{proof}


As we already remarked, on a smooth Riemannian manifold $(M^N,g)$, the barrier $c$ obtained applying \autoref{thm:Isoperimetrici} to an isoperimetric region $E$ is unique and it coincides with the value of the (constant) mean curvature of the regular part of $\partial E$. Moreover, $t\mapsto \Per(E_t)$ is always differentiable at $t=0$ and 
\begin{equation}
\frac{\di }{\di t}|_{t=0}\Per(E_t)=c\Per(E)\, .
\end{equation}
A well known consequence of this observation is the fact that the isoperimetric profile is differentiable with derivative $I'(v)=c$ at any volume $v\in(0,\infty)$ such that there exists a unique isoperimetric region $E$ of volume $v$ (with constant mean curvature of the boundary equal to $c$). Below we partially generalize this statement to the present context.

\begin{corollary}\label{cor:MeanCurvatureDifferentialIsoperimetricProfile}
Let $(X,\dist,\haus^N)$ be an $\RCD(K,N)$ space such that $\haus^N(B_1(x))\geq v_0>0$ for any $x \in X$.\\
Let $v\in (0,\haus^N(X))$ and let $\Omega, Z_1,\ldots, Z_{\overline{N}}$ be isoperimetric sets given by \autoref{thm:MassDecompositionINTRO} applied to the minimization problem at volume $v$\footnote{We understand there is just one isoperimetric region $\Omega \subset X$ if $X$ is compact.}. Let $c$ be any barrier given by \autoref{thm:Isoperimetrici} applied on either $\Omega, Z_1, \ldots,$ or $Z_{\overline{N}}$. Then
\begin{equation}\label{eqn:MeanCurvatureInequality}
I_+'(v)\leq c \leq I_-'(v).
\end{equation}
Hence, if $I$ is differentiable at $v$, if $E \in \{\Omega, Z_1,\ldots, Z_{\overline{N}}\}$, then the barrier given by \autoref{thm:Isoperimetrici} applied to $E$ is unique and equal to $I'(v)$. In particular, if $I$ is differentiable at $v$ and $E$ is an isoperimetric region on $X$ for the volume $v>0$, we have that the barrier given by \autoref{thm:Isoperimetrici} applied to $E$ is unique and equal to $I'(v)$.
\end{corollary}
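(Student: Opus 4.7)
The plan is to extract the inequalities $I'_+(v)\le c\le I'_-(v)$ directly from the Heintze--Karcher-type estimate in \autoref{prop:variationofarea}, by running the same argument used in the proof of \autoref{thm:BavardPansu} but without appealing to a smooth supporting function. First I would fix $E\in\{\Omega,Z_1,\dots,Z_{\overline N}\}$ with barrier $c$ and consider the family of equidistant deformations $E_t$ introduced in the proof of \autoref{thm:BavardPansu} (Minkowski enlargement for $t\ge 0$, erosion of distance $|t|$ from $\partial E$ for $t\le 0$), for which \autoref{prop:variationofarea} yields $\Per(E_t)\le J_{c,K,N}(t)\Per(E)$ on some interval $(-\eps,\eps)$.

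Setting
\[
\beta(t):=\haus^N(E_t)+\sum_{T\in\{\Omega,Z_1,\ldots,Z_{\overline N}\},\,T\neq E}\haus^N(T),
\]
the generalized profile identity \autoref{prop:ProfileDecomposition} combined with the above perimeter bound gives
\[
I(\beta(t))-I(v)\le(J_{c,K,N}(t)-1)\Per(E),\qquad t\in(-\eps,\eps),
\]
since $I(v)=\Per(E)+\sum_{T\neq E}\Per(T)$ by \eqref{eq:UguaglianzeIntro}. I would then recycle the intermediate step already carried out in the proof of \autoref{thm:BavardPansu}, based on $L^1$-lower semicontinuity of the perimeter and coarea, to conclude that $\Per(E_t)$ is continuous at $0$ and $\beta$ is differentiable at $0$ with $\beta'(0)=\Per(E)>0$. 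In particular $\beta$ is strictly monotone on a two-sided neighborhood of $0$.

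The conclusion then follows from a one-sided limit. For $t>0$ small, $\beta(t)>v$, so dividing the displayed inequality by $\beta(t)-v$ and sending $t\to 0^+$, using $J'_{c,K,N}(0)=c$ from \eqref{eq:use}, $\beta'(0)=\Per(E)$, and the existence of $I'_+(v)$ guaranteed by \autoref{cor:mildpropprofile}, I get
\[
I'_+(v)=\lim_{t\to 0^+}\frac{I(\beta(t))-I(v)}{\beta(t)-v}\le\lim_{t\to 0^+}\frac{(J_{c,K,N}(t)-1)\Per(E)}{\beta(t)-v}=c.
\]
For $t<0$ the denominator is negative and the reversed inequality, after the analogous limit, yields $c\le I'_-(v)$. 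This proves \eqref{eqn:MeanCurvatureInequality}. If $I$ is differentiable at $v$, the sandwich $I'_+(v)=I'_-(v)$ forces $c=I'(v)$ regardless of the choice of barrier, which gives uniqueness; the last sentence of the statement is the specialization in which the whole mass sits on a single component $E\subset X$ (trivial decomposition, $\overline N=0$).

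The only point I expect to require some care is ensuring that the limit of the left-hand side above is genuinely $I'_\pm(v)$: a priori the approach $\beta(t)\to v$ could be non-monotone, but this is ruled out by $\beta'(0)>0$, and the one-sided derivatives of $I$ exist by the local semi-concavity of $I^{N/(N-1)}$ recorded in \autoref{cor:mildpropprofile}. Everything else is a straightforward adaptation of computations already present in the proof of \autoref{thm:BavardPansu}.
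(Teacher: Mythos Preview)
Your proposal is correct and follows essentially the same approach as the paper's proof: both fix $E$ with barrier $c$, set up the same $\beta(t)$, derive the inequality $I(\beta(t))-I(v)\le (J_{c,K,N}(t)-1)\Per(E)$ via \autoref{prop:ProfileDecomposition} and \autoref{prop:variationofarea}, establish $\beta'(0)=\Per(E)>0$ by recycling the argument from \autoref{thm:BavardPansu}, and then extract the one-sided derivative inequalities. The only cosmetic difference is that the paper packages the final limit as $\psi(t):=I(\beta(t))-J_{c,K,N}(t)\Per(E)-\sum_{T\neq E}\Per(T)\le 0$ with $\psi(0)=0$ and reads off $\psi'_\pm(0)$, whereas you divide directly by $\beta(t)-v$; these are the same computation.
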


\begin{proof}
Let $v\in (0,\haus^N(X))$ be fixed. Take $\Omega_i$ a minimizing sequence of bounded sets of volume $v$ and let $\Omega,Z_j$ be the isoperimetric regions in $X,X_j$ respectively, according to the notation of \autoref{thm:MassDecompositionINTRO}.  
Let $c$ be any barrier as in the statement.

Let $E$ be an arbitrary isoperimetric region among $\Omega,Z_j$ in the spaces $X,X_j$. Let us set $\beta(t):=\haus^N(E_t)+\sum_{T\in\{\Omega,Z_1,\dots,Z_{\overline N}\}, T\neq E}\haus^N(T)$, where $E_t$ is the $t$-tubular neighbourhood of $E$ for $t\in (-\varepsilon,\varepsilon)$, with $\varepsilon>0$ small enough, see the discussion after \eqref{eq:use}.
{ Arguing as in \eqref{eqn:FINAL}, we reach the conclusion.}

%
%
%
\end{proof}

\begin{remark}
Letting $X$, $v\in (0,\haus^N(X))$ and $\Omega, Z_1,\ldots, Z_{\overline{N}}$ as in \autoref{cor:MeanCurvatureDifferentialIsoperimetricProfile}, if $I$ is differentiable at $v$, then the function $t\mapsto \Per(E_t)$ is differentiable at $t=0$ and its derivative is $c\Per(E)$, for any $E \in \{\Omega, Z_1,\ldots, Z_{\overline{N}}\}$ where $c$ is a barrier for $E$.
This follows by the argument in the proof of \autoref{cor:MeanCurvatureDifferentialIsoperimetricProfile}.

Moreover, if $I$ is differentiable at $v$, every isoperimetric set $\Omega,Z_1,\ldots,Z_{\overline{N}}$ has only one possible barrier $c=I'(v)$. 
\end{remark}

Another consequence of the sharp concavity properties of the isoperimetric profile are uniform diameter bounds for isoperimetric regions of small volume, in great generality, and any volume if the underlying space is $\RCD(0,N)$ with Euclidean volume growth.

\begin{proposition}\label{lem:BoundDiam}
For every $N\geq 2$ natural number, \(K\leq 0\), and \(v_0>0\), there exist constants \(\bar v=\bar v(K,N,v_0)>0\) and $C=C(K,N,v_0)>0$ such that the following holds.
Let $(X,\dist,\haus^N)$ be an $\RCD(K,N)$ space. Suppose that $\haus^N(B_1(x))\geq v_0$ holds for every $x\in X$. Let $E\subseteq X$ be an isoperimetric region. Then
\begin{equation}\label{eq:BoundDiamSmall}
\diam E\leq C\haus^N(E)^{\frac{1}{N}}\quad\text{ whenever }\haus^N(E)\leq\bar v\, .
\end{equation}
Moreover, for every \(N\geq 2\) natural number and \(A>0\), there exists a constant \(\tilde C=\tilde C(N,A)>0\) such that the following holds.
Let \((\X,\dist,\haus^N)\) be an \(\RCD(0,N)\) space satisfying
\[
{\rm AVR}(X,\dist,\haus^N)\coloneqq \lim_{r\to+\infty}\frac{\haus^N(B_r(\bar x))}{\omega_Nr^N}\geq A,
\]
where $\bar x\in X$ and $\omega_N$ is the Euclidean volume of the unit ball in $\mathbb R^N$. Then it holds that
\begin{equation}\label{eq:BoundDiamBig}
\diam E\leq\tilde C\haus^N(E)^{\frac{1}{N}}\quad\text{ for every isoperimetric region }E\subseteq X.
\end{equation}
\end{proposition}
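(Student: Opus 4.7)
Plan:

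The strategy is a contradiction argument via rescaling and compactness, exploiting the strict subadditivity of the isoperimetric profile on the $\RCD(0,N)$ limit spaces---a subadditivity that holds on the full volume range in the $K=0$ case thanks to \autoref{thm:BavardPansu}(2) (compare with the discussion in \autoref{lem:IsoperimetricAtFiniteOrInfinite}, where the subadditivity threshold becomes $\haus^N(X)$ when $K=0$).

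\textbf{Proof of \eqref{eq:BoundDiamSmall}.} Suppose by contradiction that the claim fails. Then there exist $\RCD(K,N)$ spaces $(X_n,\dist_n,\haus^N)$ with $\inf_x \haus^N(B_1(x))\geq v_0$, and isoperimetric regions $E_n\subset X_n$ with $v_n\eqdef\haus^N(E_n)\to v_\infty\in[0,\bar v]$ (for any fixed $\bar v$, after passing to a subsequence) and $\diam E_n\geq n\,v_n^{1/N}$. Set $\lambda_n\eqdef v_n^{1/N}$ and rescale: $\tilde\dist_n\eqdef\dist_n/\lambda_n$, $\tilde\meas_n\eqdef\lambda_n^{-N}\haus^N$. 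Then $(X_n,\tilde\dist_n,\tilde\meas_n)$ is $\RCD(\lambda_n^2 K,N)$ with $\lambda_n^2 K\to 0$; Bishop--Gromov on the original spaces shows that the rescaled unit balls have mass uniformly bounded below by some $\tilde v_0=\tilde v_0(K,N,v_0)>0$; and $\tilde E_n=E_n$ satisfies $\tilde\meas_n(\tilde E_n)=1$ with $\tilde\dist_n$-diameter at least $n$.

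Fix $p_n\in \tilde E_n$. By Gromov precompactness and the volume convergence theorem, up to subsequences, $(X_n,\tilde\dist_n,\tilde\meas_n,p_n)$ converges in the pmGH sense to an $\RCD(0,N)$ space $(Y_0,\dist_0,\haus^N,q_0)$ with $\haus^N(B_1(\cdot))\geq\tilde v_0$. Viewing $\{\tilde E_n\}$ as a (trivial) minimizing sequence of mass $1$ across these converging spaces, the generalized cross-space asymptotic mass decomposition from \cite{AntonelliNardulliPozzetta} (the variant of \autoref{thm:MassDecompositionINTRO} used in the proof of \autoref{cor:UniformPositivityProfile}) yields, up to a subsequence, a decomposition into a bounded $L^1$-strong limit $\tilde\Omega\subset Y_0$ and finitely many pieces $\tilde Z_j\subset Y_j$ at infinity (each $Y_j$ an $\RCD(0,N)$ pmGH-limit of $X_n$ along diverging basepoints, with $\haus^N(B_1(\cdot))\geq\tilde v_0$), each an isoperimetric region in its space, satisfying
\[
\haus^N(\tilde\Omega)+\sum_j\haus^N(\tilde Z_j)=1,\qquad
\Per(\tilde\Omega)+\sum_j\Per(\tilde Z_j)=\lim_n I_{(X_n,\tilde\dist_n,\tilde\meas_n)}(1).
\]
Because $\diam \tilde E_n\to\infty$, the bounded $L^1$-limit $\tilde\Omega$ in $Y_0$ cannot retain all of the mass $1$: hence the decomposition is \emph{nontrivial}, involving at least two positive-mass components.

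Now all limit spaces are $\RCD(0,N)$ with unit balls of mass $\geq\tilde v_0$, so by \autoref{thm:BavardPansu}(2) applied with $K=0$ the profile $I^{N/(N-1)}$ is concave on the full range, yielding strict subadditivity of $I$ at every volume. The cross-space analogue of \autoref{prop:ProfileDecomposition} gives $I_{Y_j}\geq I_{Y_0}$. Combining,
\[
\lim_n I_{(X_n,\tilde\dist_n,\tilde\meas_n)}(1)=\Per(\tilde\Omega)+\sum_j\Per(\tilde Z_j)
\geq I_{Y_0}(\haus^N(\tilde\Omega))+\sum_j I_{Y_0}(\haus^N(\tilde Z_j))
> I_{Y_0}(1),
\]
the final inequality being strict since there are at least two positive-mass components. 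Continuity of the isoperimetric profile along pmGH convergence (again from \cite{AntonelliNardulliPozzetta}) gives $\lim_n I_{(X_n,\tilde\dist_n,\tilde\meas_n)}(1)=I_{Y_0}(1)$, contradicting the previous strict inequality.

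\textbf{Proof of \eqref{eq:BoundDiamBig}.} The same scheme applies without any rescaling. Since $X$ is already $\RCD(0,N)$ with Euclidean volume growth $A>0$, strict subadditivity of $I_X$ holds on the entire volume range by \autoref{thm:BavardPansu}(2). Any pmGH-limit of $X$ along diverging basepoints is again $\RCD(0,N)$ with asymptotic volume ratio at least $A$ (as AVR is monotone in the ambient and preserved in non-collapsed limits), so the previous cross-space decomposition and strict-subadditivity argument yield the desired contradiction at arbitrary volumes, giving a uniform constant $\tilde C=\tilde C(K,N,A)$.

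\textbf{Main obstacle.} The principal technical difficulty is the justification of the cross-space asymptotic mass decomposition and of the continuity/identification of isoperimetric profiles for sequences $E_n\subset X_n$ where both the ambient space and the set vary along a pmGH-convergent sequence. These tools, which go beyond the fixed-space statements of \autoref{thm:MassDecompositionINTRO} and \autoref{prop:ProfileDecomposition}, are supplied by the generalized compactness results of \cite{AntonelliNardulliPozzetta} that were already invoked in the proof of \autoref{cor:UniformPositivityProfile}.
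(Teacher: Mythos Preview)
Your strategy (rescale, pass to a pmGH limit, invoke a cross-space mass decomposition, and contradict strict subadditivity) is genuinely different from the paper's, which is a direct quantitative argument: it derives a differential inequality for $m_x(r)=\haus^N(E\cap B_r(x))$ from the concavity of $\tilde\eta(v)=I(v)-\tilde C v^{(N+1)/N}$ and the small-volume isoperimetric inequality, integrates to get a uniform lower bound $m_x(r_0)\geq c\,r_0^N$, and concludes by a covering/counting argument together with the connectedness \autoref{cor:GHRnRegioniConnesse}. Part \eqref{eq:BoundDiamBig} is then obtained by a single rescaling back to the small-volume case.

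Your argument, however, has a genuine gap in the contradiction step. You assert that ``the cross-space analogue of \autoref{prop:ProfileDecomposition} gives $I_{Y_j}\geq I_{Y_0}$'' and that ``continuity of the isoperimetric profile along pmGH convergence'' gives $\lim_n I_{(X_n,\tilde\dist_n)}(1)=I_{Y_0}(1)$. Neither is available. In \autoref{prop:ProfileDecomposition} the inequality $I_{X_j}\geq I_X$ holds because each $X_j$ is a limit \emph{at infinity of the fixed space $X$}, so competitors in $X_j$ can be pulled back to $X$; here $Y_0$ and the $Y_j$ are \emph{all} pmGH limits of the varying sequence $(X_n,\tilde\dist_n)$ along different basepoints, and there is no a priori ordering between their profiles. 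Likewise, the generalized compactness you cite (as used in \autoref{cor:UniformPositivityProfile}) only yields $\sum_k\Per(F_k)\leq\liminf_n\Per(\tilde E_n)$ and $\sum_k\haus^N(F_k)=1$; it does not produce isoperimetric pieces nor the equality $\lim_n I_{X_n}(1)=I_{Y_0}(1)$. The argument can be repaired, but not as written: one should use \autoref{cor:UniformLipschitzProfile} and \autoref{prop:SharpenedConcavityAndLimit} to obtain, via Arzel\`a--Ascoli, a limit profile $J\coloneqq\lim_n I_{(X_n,\tilde\dist_n)}$ with $J^{N/(N-1)}$ concave (the correction constant $\mathscr C$ tends to $0$ as $K_n\to 0$), so that $J$ is strictly subadditive and satisfies $J\leq I_{Y_k}$ for every $k$ by upper semicontinuity; then $\sum_k\Per(F_k)\geq\sum_k J(\haus^N(F_k))>J(1)=\lim_n I_{X_n}(1)$ contradicts the compactness inequality. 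You also need to justify that the decomposition has at least two positive-mass pieces: large diameter alone does not force this (think of a thin tail of negligible mass); you must invoke the uniform density estimates of \autoref{cor:UniformRegularityIsop} at boundary points to guarantee a definite amount of mass near any far-away point of $\partial\tilde E_n$. Finally, for \eqref{eq:BoundDiamBig} your ``without any rescaling'' remark is not right when $\haus^N(E_n)\to\infty$; a rescaling is still needed, and the paper's reduction to the first part is the clean way to handle it.
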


\begin{proof}
By \autoref{lem:ConsequenceSmallDiameter}, \autoref{rmk:ConseqMJ}, and
item (2) of \autoref{prop:SharpenedConcavityAndLimit}, there exist constants $\vartheta=\vartheta(K,N,v_0)>0$, {\(\tilde v_1=\tilde v_1(K,N,v_0)>0\)}, and \(C_0=C_0(K,N,v_0)>0\) such that:
\begin{itemize}
\item[a)] \(I(v)\geq\vartheta v^{\frac{N-1}{N}}\) for every { \(v\leq \tilde  v_1\).}
\item[b)] \(I(v)/v^{\frac{N-1}{N}}\leq C_0\) for every { \(v\leq \tilde v_1\).}
\item[c)] { $I$ is concave on \([0,\tilde v_1]\)}.
\end{itemize}
Given any $x\in E$ and $r>0$, we define $m_x(r)\coloneqq\haus^N(E\cap B_r(x))$ and $E^x_r\coloneqq E\setminus B_r(x)$.
Set $v_E\coloneqq\haus^N(E)$ for brevity and suppose {\(v_E\leq \tilde v_1\). By c), the function \(v\mapsto I(v)/v\) is non-increasing on \([0,\tilde v_1]\),} so that
{
\[
\frac{I(v_E)}{v_E}\leq\frac{I\big(v_E-m_x(r)\big)}{v_E-m_x(r)},\quad\text{ for every }r>0.
\]
}
Multiplying both sides by \(v_E-m_x(r)\), we deduce that
{
\begin{equation}\label{eq:BoundDiam1}\begin{split}
\Per(E)&=I(v_E)\leq I\big(v_E-m_x(r)\big)+\frac{m_x(r)}{v_E}I(v_E)\leq \Per(E_r^x)+\frac{m_x(r)}{v_E}I(v_E).
\end{split}
\end{equation}}
By using \cite[Lemma 4.5]{Ambrosio02} we obtain $\Per(E^x_r)+\Per(E\cap B_r(x))\leq \Per(E)+2m'_x(r)$ for a.e.\ $r>0$.
Hence a) implies
\begin{equation}\label{eq:BoundDiam2}
\Per(E^x_r)\leq \Per(E)-\Per(E\cap B_r(x))+2m'_x(r)\leq \Per(E)-\vartheta m_x(r)^{\frac{N-1}{N}}+2m'_x(r)
\end{equation}
for a.e.\ $r>0$. Combining \eqref{eq:BoundDiam1} with \eqref{eq:BoundDiam2}, we thus obtain for a.e.\ \(r>0\) that
\begin{equation}\label{eq:BoundDiam3}
\vartheta m_x(r)^{\frac{N-1}{N}}-\frac{I(v_E)}{v_E}m_x(r)\leq 2m'_x(r).
\end{equation}
{ Once \eqref{eq:BoundDiam3} is obtained, one can argue as in \cite[Lemma 5.7]{LeonardiRitore} to deduce the claim \eqref{eq:BoundDiamSmall} with $\bar v = \tilde v_1$. We just sketch the main steps.}
Let
\begin{equation}\label{eq:zzDefrzero}
    {
r_0\coloneqq\frac{1}{\omega_N^{1/N}}\frac{\vartheta v_E}{4 I(v_E)} \overset{{\rm b)}}{\ge} \frac{\vartheta}{4 \omega_N^{1/N} C_0} v_E^{1/N}.
}
\end{equation}
{Defining $f(r) \eqdef 2e^{\frac{I(v_E)}{2v_E}r} m_x(r)$, \eqref{eq:BoundDiam3} implies that $f'(r) \ge 
2^{\frac1N-1}\vartheta e^{\frac{I(v_E)}{2v_E \, N}r} f^{\frac{N-1}{N}}$
for a.e. $r \in (0,r_0)$. Hence \eqref{eq:zzDefrzero} implies
\begin{equation}\label{eqn:Stimasottooo}
    \begin{split}
        2^{\frac1N}e^{\frac{\vartheta}{8N \omega_N^{1/N}}} m_x(r_0)^{\frac1N}
        &= f(r_0)^{\frac1N} \ge \int_{r_0/2}^{r_0} (f(r)^{\frac1N})' \de r  \ge
        c(K,N,v_0) r_0,
    \end{split}
\end{equation}
where $c(K,N,v_0)>0$ may change from line to line. Hence \eqref{eqn:Stimasottooo} implies
\begin{equation}\label{eq:BoundDiam5}
    m_x(r_0) \ge c(K,N,v_0) r_0^N.
\end{equation}
Considering a maximal family $\mathcal F$ of pairwise disjoint open balls $B$ having radius $r_0$ and center $c(B)$ in $E$, since
$\{B_{2r_0}(c(B))\,:\,B\in\mathcal F\}$ is a covering of $E$, then $\#\mathcal F \le c(K,N,v_0) v_E/ r_0^N$.
The set
$U\coloneqq\bigcup_{B\in\mathcal F}B_{2r_0}(c(B))$ contains $E$, which is connected by \autoref{cor:GHRnRegioniConnesse},
hence $U$ must be connected as well. Then
\begin{equation}\label{eq:BoundDiam6}
\begin{split}
    \diam E&\leq\diam U\leq\sum_{B\in\mathcal F}\diam B_{2r_0}(c(B))\leq 4r_0\#\mathcal F
 {\overset{\eqref{eq:zzDefrzero}}{\le} C v_E^{\frac{1}{N}},}
\end{split}
\end{equation}
{ for a suitable constant $C=C(K,N,v_0)$, proving \eqref{eq:BoundDiamSmall}.}
}

%
%

{
The second part of the statement is a consequence of the first part, and a scaling argument. Indeed, from the first part one gets that the following holds for every $K\leq 0, R> 0$, $N\in \mathbb N\cap [2,\infty)$, and $v_0>0$. For every $\RCD(KR^{-2}
,N)$ space $(X,\dist,\haus^N)$
for which $\inf_{x\in X}\haus^N (B_R(x)) \geq v_0R^N$, then every isoperimetric region $E$ with $\haus^N(E)\leq \bar v (K,N,v_0)R^N$ has diameter bounded from above by $C(K
, N, v_0)\haus^N(E)^{\frac{1}{N}}$. 

If now $(X,\dist,\haus^N)$ is an $\RCD(0,N)$ space with $\mathrm{AVR}(X,\dist,\haus^N)\geq A>0$, then one has $\inf_{x\in X}\haus^N(B_R(x))\geq A\omega_NR^N$ for every $R>0$. Thus, taking $E$ an isoperimetric region and using what we said above with $K=0$ and $R$ sufficiently large, we get
\[
\diam(E)\leq C(0,N,A\omega_N)\haus^N(E)^{\frac{1}{N}}.
\]
}
\end{proof}

\begin{remark}
\autoref{lem:BoundDiam} is a three-fold generalization of \cite[Lemma 4.9]{NardulliOsorio20}. Indeed, we prove the statement in the non-smooth $\RCD(K,N)$ case with reference measure $\haus^N$ and $\haus^N(B_1(x))\geq v_0>0$ for every $x\in X$, while the authors in \cite{NardulliOsorio20} deal with smooth Riemannian manifolds. Moreover, in the smooth case, our setting corresponds to the \textit{weak bounded geometry} hypothesis in \cite{NardulliOsorio20}, while \cite[Lemma 4.9]{NardulliOsorio20} is proved under the stronger \textit{mild bounded geometry} hypothesis. Finally, we are able to drop the hypothesis (H) in \cite[Lemma 4.9]{NardulliOsorio20}. Hence, \autoref{lem:BoundDiam} is new even in the smooth setting and sharpens the previous \cite[Lemma 4.9]{NardulliOsorio20}.

Notice that \autoref{lem:BoundDiam} generalizes also \cite[Lemma 5.5]{LeonardiRitore} to the setting of $\RCD(0,N)$ spaces with a uniform bound below on the volume of unit balls. 
\end{remark}

{

We conclude by stating a stability result for sequences of isoperimetric sets $E_i$ converging in $L^1$ to a limit set, where the $L^1$ convergence improves to Hausdorff convergence. In \autoref{thm:ConvergenceBarriersStability} below, observe that no uniform hypotheses on the mean curvature barriers for the $E_i$'s are assumed. Instead a uniform bound on such barriers follows from the fine properties we proved on the isoperimetric profile. We mention that analogous stability results for mean curvature barriers have been independently considered in the recent \cite{Ketterer21}.

\begin{theorem}\label{thm:ConvergenceBarriersStability}
Let $(X_i,\dist_i,\haus^N,x_i)$ be a sequence of $\RCD(K,N)$ spaces converging to $(Y,\dist_Y,\haus^N,y)$ in pmGH sense, and let $(Z,\dist_Z)$ be a space realizing the convergence. Assume that $\haus^N(B_1(p))\ge v_0>0$ for any $p \in X_i$ and any $i$. Let $E_i \subset X_i$, $F\subset Y$.

If $E_i$ is isoperimetric, $E_i\subset B_R(x_i)$ for some $R>0$ for any $i$, $c_i$ is a mean curvature barrier for $E_i$ for any $i$, $E_i\to F $ in $L^1$-strong, and $0<\lim_i \haus^N(E_i)< \lim_i \haus^N(X_i)$, then
\begin{equation}\label{eq:ConvergenceBarriersStability}
    \begin{split}
        \text{$F$ is isoperimetric}\, ,& \\
         |c_i| \le L& \qquad \text{for any $i$ large enough}\, ,\\
        |D\chi_{E_i}|\to |D\chi_F|  & \qquad\text{in duality with $C_{\rm bs}(Z)$}\, ,\\
        \partial E_i \to \partial F\, , \,\, 
        \overline{E}_i \to \overline{F}  & \qquad\text{in Hausdorff distance in $Z$}\, ,
    \end{split}
    \end{equation}
where $L=L(K,N,v_0,{\inf_i \haus^N(E_i), \sup_i \haus^N(E_i)})>0$. In particular, the mean curvature barriers $c_i$ converge up to subsequence to a mean curvature barrier for $F$.
\end{theorem}

The proof of \autoref{thm:ConvergenceBarriersStability} follows by well-established arguments. In fact, by a classical contradiction argument as in \cite{AFP21, AntonelliNardulliPozzetta}, it follows that $\Per(E_i)\to \Per(F)$ and $F$ is isoperimetric. In particular, $|D\chi_{E_i}|\to |D\chi_F|$ in duality with $C_{\rm bs}(Z)$.

By \autoref{cor:UniformLipschitzProfile}, \autoref{cor:UniformRegularityIsop}, {and \autoref{cor:MeanCurvatureDifferentialIsoperimetricProfile}},   the mean curvature barriers $c_i$ are uniformly bounded and the $E_i$ satisfy uniform density estimates at boundary points, independently of $i$. This readily implies that any converging sequence $q_i \in \partial E_i$ must converge to a point in $\partial F$ (compare with \cite[Theorem 2.43]{MoS21}), and then Kuratowski convergence of $\partial E_i$ to $\partial F$ is achieved, which easily implies \eqref{eq:ConvergenceBarriersStability}.

Finally, if $c_i\to c\in\R$ up to subsequence, integrating a sequence of strongly $H^{1,2}$-converging Lipschitz functions along $(X_i,\dist_i,\haus^N,x_i) \to (Y,\dist_Y,\haus^N,y)$ with respect to the Laplacian of the signed distance functions from $E_i$, one gets that the signed distance from $F$ satisfies the adimensional bounds \eqref{eq:adimensional laplacian comparison}. Arguing as in Step 2 of the proof of \autoref{thm:Isoperimetrici}, the inequalities are readily improved to get that $c$ is a mean curvature barrier for $F$.

}

\printbibliography[title={References}]

\typeout{get arXiv to do 4 passes: Label(s) may have changed. Rerun} 

\end{document}